\documentclass[a4paper,10pt,withmarginpar]{amsart}
\setlength{\overfullrule}{5pt}

\usepackage{amsthm,amssymb}
\usepackage{stmaryrd}
\usepackage[T1]{fontenc}
\usepackage[utf8]{inputenc}
\usepackage{enumitem}
\usepackage{mathrsfs}
\usepackage{dsfont}
\usepackage{mathtools}
\usepackage{scalerel,stackengine}
\usepackage{nicefrac}
\usepackage{enumitem}
\usepackage[margin=3.5cm]{geometry}
\usepackage{tikz}
\usepackage{comment}
\usepackage[all,cmtip]{xy}
\usetikzlibrary{positioning}

\usepackage{natbib}
\usepackage[hypertexnames=false]{hyperref}
\hypersetup{
    colorlinks,
    linkcolor={red!90!black},
    citecolor={green!70!black},
    urlcolor={blue!80!black}
}

\numberwithin{equation}{section}



\makeatletter
\DeclareSymbolFont{lettersA}{U}{txmia}{m}{it}
\SetSymbolFont{lettersA}{bold}{U}{txmia}{bx}{it}
\DeclareFontSubstitution{U}{txmia}{m}{it}

\DeclareMathSymbol{\m@thbbch@rA}{\mathord}{lettersA}{129}
\DeclareMathSymbol{\m@thbbch@rB}{\mathord}{lettersA}{130}
\DeclareMathSymbol{\m@thbbch@rC}{\mathord}{lettersA}{131}
\DeclareMathSymbol{\m@thbbch@rD}{\mathord}{lettersA}{132}
\DeclareMathSymbol{\m@thbbch@rE}{\mathord}{lettersA}{133}
\DeclareMathSymbol{\m@thbbch@rF}{\mathord}{lettersA}{134}
\DeclareMathSymbol{\m@thbbch@rG}{\mathord}{lettersA}{135}
\DeclareMathSymbol{\m@thbbch@rH}{\mathord}{lettersA}{136}
\DeclareMathSymbol{\m@thbbch@rI}{\mathord}{lettersA}{137}
\DeclareMathSymbol{\m@thbbch@rJ}{\mathord}{lettersA}{138}
\DeclareMathSymbol{\m@thbbch@rK}{\mathord}{lettersA}{139}
\DeclareMathSymbol{\m@thbbch@rL}{\mathord}{lettersA}{140}
\DeclareMathSymbol{\m@thbbch@rM}{\mathord}{lettersA}{141}
\DeclareMathSymbol{\m@thbbch@rN}{\mathord}{lettersA}{142}
\DeclareMathSymbol{\m@thbbch@rO}{\mathord}{lettersA}{143}
\DeclareMathSymbol{\m@thbbch@rP}{\mathord}{lettersA}{144}
\DeclareMathSymbol{\m@thbbch@rQ}{\mathord}{lettersA}{145}
\DeclareMathSymbol{\m@thbbch@rR}{\mathord}{lettersA}{146}
\DeclareMathSymbol{\m@thbbch@rS}{\mathord}{lettersA}{147}
\DeclareMathSymbol{\m@thbbch@rT}{\mathord}{lettersA}{148}
\DeclareMathSymbol{\m@thbbch@rU}{\mathord}{lettersA}{149}
\DeclareMathSymbol{\m@thbbch@rV}{\mathord}{lettersA}{150}
\DeclareMathSymbol{\m@thbbch@rW}{\mathord}{lettersA}{151}
\DeclareMathSymbol{\m@thbbch@rX}{\mathord}{lettersA}{152}
\DeclareMathSymbol{\m@thbbch@rY}{\mathord}{lettersA}{153}
\DeclareMathSymbol{\m@thbbch@rZ}{\mathord}{lettersA}{154}

\long\def\DoLongFutureLet #1#2#3#4{%
   \def\@FutureLetDecide{#1#2\@FutureLetToken
      \def\@FutureLetNext{#3}\else
      \def\@FutureLetNext{#4}\fi\@FutureLetNext}
   \futurelet\@FutureLetToken\@FutureLetDecide}
\def\DoFutureLet #1#2#3#4{\DoLongFutureLet{#1}{#2}{#3}{#4}}
\def\@EachCharacter{\DoFutureLet{\ifx}{\@EndEachCharacter}%
   {\@EachCharacterDone}{\@PickUpTheCharacter}}
\def\m@keCharacter#1{\csname\F@ntPrefix#1\endcsname}
\def\@PickUpTheCharacter#1{\m@keCharacter{#1}\@EachCharacter}
\def\@EachCharacterDone \@EndEachCharacter{}

\DeclareRobustCommand*{\varmathbb}[1]{\gdef\F@ntPrefix{m@thbbch@r}%
  \@EachCharacter #1\@EndEachCharacter}
\makeatother


\newtheorem{theorem}{Theorem}[section]
\newtheorem{lemma}[theorem]{Lemma}
\newtheorem{proposition}[theorem]{Proposition}
\newtheorem{corollary}[theorem]{Corollary}

\newtheoremstyle{mytheoremstyle} 
    {1em plus .2em minus .1em}                    
    {1em plus .2em minus .1em}                    
    {\rmfamily}                   
    {}                           
    {\bfseries}                   
    {.}                          
    {.5em}                       
    {}  

\theoremstyle{mytheoremstyle}

\newtheorem{definition}[theorem]{Definition}
\newtheorem{example}[theorem]{Example}
\newtheorem{remark}[theorem]{Remark}



\newcommand\iffdef{\;\mathrel{\mathord{:}\mathord{\longleftrightarrow}}\;}

\newcommand\defeq{\coloneqq} 
\newcommand{\eqdef}{\eqqcolon}

\newcommand{\QED}{\hfill$\dashv$} 

\renewcommand{\diamond}{\Diamond}
\newcommand{\vD}{\vdash_{\Diamond}} 
\newcommand{\Dv}{\Diamond_{\vdash}} 
\DeclareMathOperator{\power}{\mathcal{P}} 
\DeclareMathOperator{\closedne}{\mathscr{C}_{0}} 
\DeclareMathOperator{\Ult}{Ul} 
\DeclareMathOperator{\Uf}{\mathsf{Uf}} 


\renewcommand{\mid}{:}
\newcommand{\con}{\mathrel{\mathsf{C}}}
\DeclareMathOperator{\RC}{RC} 
\DeclareMathOperator{\Int}{Int} 
\newcommand{\Cl}{\mathop{\mathrm{Cl}}}

\newcommand{\RD}{R_{\Diamond}} 


\newcommand{\bfB}{\mathbf{B}}

\newcommand{\frF}{\mathfrak{F}}


\newcommand{\tand}{\text{ and }}

\newcommand{\qtiff}{\quad\text{iff}\quad}

\renewcommand{\varnothing}{\emptyset}

{\medskip\color{magenta}{\noindent RG: \  }}{\hfill{$\square$}\color{black}\par\medskip}

\title{Extended Contact Algebras: Algebraic Analysis and Duality Theory}

\author[]{Rafa\l\ Gruszczy\'nski, Paula Mench\'on, and William Zuluaga}\thanks{This research was funded by (a) the National Science Center (Poland), grant number~2020/39/B/HS1/00216 and (b) the MOSAIC project (EU H2020-MSCA-RISE-2020 Project 101007627).}

\date{}

\address{Rafa\l\ Gruszczy\'nski, \textsc{Orcid:} 0000-0002-3379-0577\\
Department of Logic\\
Nicolaus Copernicus University in Toru\'n\\
Poland}

\email{gruszka@umk.pl}

\address{Paula Mench\'on, \textsc{Orcid:} 0000-0002-9395-107X\\
Department of Logic\\
Nicolaus Copernicus University in Toru\'n\\
Poland\\
and 
Universidad Nacional del Centro de la Provincia de Buenos Aires\\ 
Tandil, Buenos Aires\\ 
Argentina
}

\email{mpmenchon@nucompa.exa.unicen.edu.ar}

\address{William Zuluaga, \textsc{Orcid}: 0000-0002-8798-9493\\ Conicet and
University of the Center of the Buenos Aires Province, Tandil, Buenos Aires\\
Argentina}
\email{wizubo@gmail.com}

\begin{document}

\maketitle

\begin{abstract}
The ternary extended contact relation was introduced in \citep{Ivanova-ECAATC} as a~more expressive counterpart of the standard binary contact relation. The class of Boolean algebras expanded with the relation was named Extended Contact Algebras (ECAs).
In this work, we take an algebraic perspective on ECAs, interpreting the ternary relation as a form of entailment. We introduce \emph{Pseudo-Inference Algebras}, purely algebraic structures where the ternary relation is replaced by a monotone ternary operator, capturing the logical character of extended contact. We show that the subclass of relational Pseudo-Inference Algebras corresponds precisely to ECAs and generates a subvariety of \emph{strict PSI-Algebras}, which forms a discriminator variety. Furthermore, we extend Stone duality to this ternary context, introducing \emph{descriptive PSI-frames} and establishing three interrelated dualities that differ in their morphisms while sharing the same class of topological objects. The framework developed in the paper provides a unified relational semantics for Boolean algebras equipped with monotone ternary operators, connecting spatial and logical notions within a categorical and topological setting.

\smallskip

\noindent \textsc{Keywords}: Extended Contact Algebras, extended contact relation, modal algebras, ternary relation, ternary operations, topological duality

\smallskip
\noindent\textsc{MSC}: Primary 06E25; Secondary 03G05
\end{abstract}

\section{Introduction}\label{Section 1}

The class of Extended Contact Algebras (ECAs, for short) was introduced by \cite{Ivanova-ECAATC}. The idea was motivated by certain limitations related to the well-known Boolean Contact Algebras (BCAs), i.e., Boolean algebras expanded with a binary relation $\con$ of contact. BCAs were introduced in the early 2000s by \cite{Stell-BCAANATRCC} and \cite{Duntsch-Winter-CBCA} as an algebraic tool for representing and studying the phenomenon of the nearness of regions in space. From a certain point of view, they could be seen as an abstract approach to proximity spaces in the sense of \cite{Thron-PSAG} (see, e.g., \citealp{Duntsch-et-al-RBTODSAPA}). 

The idea of BCAs proved both fruitful and useful, giving insight into the algebraic aspects of Region Connection Calculus and developing the so-called Region-based Theories of Space (also known as mereotopologies) that had been investigated at least since the times of de Laguna (\citeyear{deLaguna-PLS}) and Whitehead~(\citeyear{Whitehead-CN,Whitehead-PR}). Among others, it turned out that BCAs have a considerable expressive power that lets express various topological properties by means of the contact relation. However, unsurprisingly, natural properties were discovered that could not be grasped within the framework of contact relations. One of these was the property of internal connectedness of regular closed subsets of topological spaces. Since the topological property of connectedness of regions---particularly important for mereotopology~\citep{Pratt-Schoop-EIPPM}---can be naturally expressed by means of the contact relation, it could be expected that he similar situation holds for the property of internal connectedness. Contrary to this, Tatyana Ivanova (\citeyear{Ivanova-ECAATC}) proved that the property cannot be formulated by means of contact.\footnote{See also \citep{Gruszczynski-Menchon-ONDOICVCR}.} At the same time, she put forward the idea of a ternary relation of \emph{extended contact}, 
which strengthens the ordinary contact and allows for a definition of the internal connectedness 
of regular closed sets. 

Formally, an \emph{extended contact algebra} is a structure of the form
\[
(A,\vee,\wedge,\neg,0,1,\vdash),
\]
where $(A,\vee,\wedge,\neg,0,1)$ is a non-degenerate Boolean algebra and
$\vdash$ is a ternary relation on $A$ such that, for all $a,b,d,e,f\in R$, the following conditions hold:
\begin{align*}
  \label{ExtCA0}\tag{ExtCA0}  &\text{If $(a,b)\vdash f$, then $(a\vee d,\,b)\vdash d\vee f$}.\\  
  \label{ExtCA1}\tag{ExtCA1}  &\text{If $(a,b)\vdash d$, $(a,b)\vdash e$ and $(d,e)\vdash f$, then $(a,b)\vdash f$}.\\
  \label{ExtCA2}\tag{ExtCA2}  &\text{If $a\le f$, then $(a,b)\vdash f$}.\\
  \label{ExtCA3}\tag{ExtCA3}  &\text{If $(a,b)\vdash f$, then $a\wedge b\le f$}.\\
  \label{ExtCA4}\tag{ExtCA4}  &\text{If $(a,b)\vdash f$, then $(b,a)\vdash f$}.
\end{align*}

It is noteworthy that the relation $\vdash$ has a \emph{logical character} (in a certain sense), since it behaves analogously to a consequence relation, 
capturing a form of inference or entailment between regions within the underlying Boolean framework.  Moreover, this relational perspective---and its generalization from \citep{Vakarelov-AMBOSQ} 
in the form of \emph{sequent algebras}---offers a new perspective on the region-based approach 
to theories of space, establishing a conceptual bridge between the spatial notion of 
\emph{nearness} and the logical notion of \emph{entailment}.

In this paper, we adopt the entailment perspective as dominant and define and investigate a class of Pseudo-Inference Algebras that can be viewed as purely algebraic counterparts of Extended Contact Algebras introduced by Ivanova. Purely algebraic, since we replace the ternary relation by a~ternary operator which is (almost) a possibility operator, and we translate the axioms for extended contact into constraints put upon the operator.

The paper is organized as follows. Section~\ref{Section 2} recalls the notion of \emph{Extended Contact Algebra} and presents a simplified, yet equivalent, axiomatization of the system originally introduced in~\citep{Ivanova-ECAATC}. This formulation yields a more transparent algebraic characterization of the associated operator while preserving the expressive power of the original relational framework. Section~\ref{Section 3} introduces the variety of \emph{Pseudo-Inference Algebras} and establishes a one-to-one correspondence between its subclass of \emph{relational algebras} and the class of ECAs. In Section~\ref{Section 4} we investigate the class of \emph{relational PSI-algebras} in detail, showing that it generates the subvariety of \emph{strict PSI-Algebras}, defined by the additional equations \eqref{R1}--\eqref{R2} and \eqref{S}, and that this subvariety forms a discriminator variety. Finally, Section~\ref{Section 6} develops the topological counterpart of the theory by extending Stone duality to the ternary context. We introduce the class of \emph{descriptive PSI-frames} and establish three interrelated dualities which share the same class of topological objects but differ in their morphisms, thereby providing a unified relational semantics for Boolean algebras endowed with a monotone ternary operator.  Throughout the paper, we assume the reader is familiar with standard notions from category theory and universal algebra as presented in \citep{ML1978} and \citep{BS2011}, respectively.

\section{Extended Contact Algebras}\label{Section 2}

As we mentioned in the introduction, the first axiomatization of ECAs is due to \cite{Ivanova-ECAATC}. In our approach, we will use a slightly simplified set of constraints that is equivalent to the original one. Thanks to this, on the side of the operator associated with the relation, we obtain a~simpler and easier to handle set of algebraic postulates.

We will prove that an \emph{Extended Contact Algebra} is equivalent to a structure of the form $(\mathbf{A}, \vdash)$, 
where \(\mathbf{A}=(A, \vee, \wedge, \neg,0,1)\) is a non-degenerate Boolean algebra (whose elements are called \emph{regions}) and \(\vdash\) is a ternary relation on \(A\) such that, for all $a, b, c, d, f \in A$ ($\leq$ is the standard binary Boolean order relation):
\begin{align}
\label{EC0}\tag{EC0}  &\text{If \((a, b) \vdash f\), then \((a \vee d, b) \vdash f\vee d\)}\,.\\
\label{EC1}\tag{EC1}  &\text{If $(a,b)\vdash d$, $(a,b)\vdash e$ and $(d,e)\vdash f$, then $(a,b)\vdash f$}\,.\\
\label{EC2}\tag{EC2}  &(a,b)\vdash a\vee f\,.\\
\label{EC3}\tag{EC3}  &\text{If \((a, a) \vdash f\), then \(a \leq f\)}\,.\\
\label{EC4}\tag{EC4}  &\text{If \((a, b) \vdash f\), then \((b, a) \vdash f\)}\,.
\end{align}\QED

The constraints can be interpreted as expressing elementary facts about entailment (in the very special case when we have no more than two premises). To see this, let us interpret the Boolean order $x\leq y$ as $x$ \emph{carries more information than or the same information as} $y$. According to \eqref{EC0}, if $a$ and $b$ entail $f$, then we can weaken premises and the conclusion simultaneously. \eqref{EC1} is an instance of the cut rule: if $a$ and $b$ entail both $d$ and $e$, which entail $f$, then we can eliminate $d$ and $e$. According to \eqref{EC2} we can always weaken the conclusion, and according to \eqref{EC3} the conclusion never carries more information than the premise. Finally, \eqref{EC4} says that the order of premises is not relevant for the entailment.  

\begin{proposition}   
If \eqref{EC1} and \eqref{EC2} hold in a structure $(\mathbf{A}, \vdash)$, we have that
\begin{align}
    \text{If $(a,b)\vdash c$ and $c\leq f$, then $(a,b)\vdash f$.}\label{eq:ECA-weaker-right}    
\end{align}
\end{proposition}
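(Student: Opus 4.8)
The plan is to derive the conclusion-monotonicity property \eqref{eq:ECA-weaker-right} by a single application of the cut rule \eqref{EC1}, feeding it two auxiliary entailments that are both furnished by \eqref{EC2}. Assume $(a,b)\vdash c$ and $c\le f$. To invoke \eqref{EC1} with premise pair $(a,b)$ and target conclusion $f$, I need to exhibit two intermediate regions $d,e$ satisfying $(a,b)\vdash d$, $(a,b)\vdash e$, and $(d,e)\vdash f$, and then read off $(a,b)\vdash f$.

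The natural choice is $d\defeq c$, since $(a,b)\vdash c$ is exactly the hypothesis. For $e$, I would read \eqref{EC2} as universally quantified over its conclusion slot, so that $(a,b)\vdash a\vee t$ holds for every $t$; taking $t\le a$ (for instance $t=0$) yields $(a,b)\vdash a$, so I set $e\defeq a$. It then remains to verify the bridging entailment $(c,a)\vdash f$. Here \eqref{EC2} applies again, this time with first coordinate $c$: it gives $(c,a)\vdash c\vee f$, and since $c\le f$ forces $c\vee f=f$ by absorption, this is precisely $(c,a)\vdash f$.

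With the three entailments $(a,b)\vdash c$, $(a,b)\vdash a$ and $(c,a)\vdash f$ in hand, \eqref{EC1} delivers $(a,b)\vdash f$, as required. I do not expect any genuine obstacle here: the only subtlety worth flagging is that \eqref{EC2} must be used twice in two different roles—once to manufacture a cheaply-entailed second premise $e=a$, and once, through the absorption identity $c\vee f=f$, to supply the cut-eliminating entailment $(c,a)\vdash f$. Once those two instances are identified, the result follows immediately from \eqref{EC1}.
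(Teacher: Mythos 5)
Your proof is correct and follows essentially the same route as the paper: both use \eqref{EC2} together with the absorption $c\vee f=f$ to manufacture the bridging entailment, and then conclude by one application of the cut rule \eqref{EC1}. The only (immaterial) difference is your choice of the second intermediate region: the paper takes $d=e=c$, so the hypothesis $(a,b)\vdash c$ serves as both cut premises and the bridge is $(c,c)\vdash f$, whereas you take $e=a$ and spend an extra instance of \eqref{EC2} to obtain $(a,b)\vdash a$.
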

\begin{proof}
    Indeed, suppose that $(a,b)\vdash c$ and $c\leq f$. By \eqref{EC2} we have that $(c,c)\vdash c\vee f$, so $(c,c)\vdash f$. By \eqref{EC1} it must be the case that $(a,b)\vdash f$.
\end{proof}
Propositions~\ref{prop:EC1-EC2} and \ref{prop:EC3} below show that our set of axioms is equivalent to the one originally introduced in \citep{Ivanova-ECAATC}.
\begin{proposition}\label{prop:EC1-EC2}
    The axiom \eqref{EC2} is equivalent to \eqref{ExtCA2}.
\end{proposition}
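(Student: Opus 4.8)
The plan is to prove the two implications separately, relying only on the elementary Boolean fact that $a\le f$ holds if and only if $a\vee f=f$. Neither direction will need any of the remaining axioms: each is obtained by a single instantiation of the relevant constraint, so the whole argument amounts to substituting variables correctly and rewriting a join.

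First I would show that \eqref{ExtCA2} yields \eqref{EC2}. Since $a\le a\vee f$ holds in every Boolean algebra, I instantiate \eqref{ExtCA2} taking the conclusion to be $a\vee f$ in place of $f$. The hypothesis of \eqref{ExtCA2} is then automatically met, and the axiom delivers $(a,b)\vdash a\vee f$, which is precisely \eqref{EC2}. For the converse, assuming \eqref{EC2}, I would suppose $a\le f$; then $a\vee f=f$, so the instance $(a,b)\vdash a\vee f$ guaranteed by \eqref{EC2} becomes $(a,b)\vdash f$, exactly the statement of \eqref{ExtCA2}.

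I do not expect any genuine obstacle here: both axioms express the same content, namely that a conclusion dominating the first premise is always entailed, and the equivalence reduces to absorbing the side condition $a\le f$ into the join $a\vee f$. The only point requiring a little care is the bookkeeping of which variable plays the role of the conclusion in each direction, so that the substitution $f\mapsto a\vee f$ (going one way) and the rewriting $a\vee f=f$ (going the other) are applied to the right occurrences.
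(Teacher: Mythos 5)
Your argument is correct and coincides with the paper's own proof: the forward direction instantiates \eqref{ExtCA2} at the conclusion $a\vee f$ (using $a\le a\vee f$), and the converse rewrites $a\vee f$ as $f$ under the hypothesis $a\le f$. No further comment is needed.
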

\begin{proof}
    It is clear that \eqref{ExtCA2} entails \eqref{EC2}. For the  other direction, if $a\leq f$, then we have that $a\vee f=f$. But by \eqref{EC2} it must be the case that $(a,b)\vdash a\vee f$, so $(a,b)\vdash f$.
\end{proof}

Since  by \eqref{ExtCA2} we have that $a\leq f$ entails that $(a,a)\vdash f$, then we can strengthen \eqref{EC3} to full equivalence
\begin{equation}
    (a,a)\vdash f\qtiff a\leq f\,.
\end{equation} 

\begin{proposition}
If \eqref{EC1}, \eqref{EC2} and \eqref{EC4} hold in a structure $(\mathbf{A}, \vdash)$, it is the case that
\begin{equation}
        \text{If $(a,b)\vdash c$ and $f\leq a$, then $(f,b)\vdash c$.}\label{eq:ECA-stronger-left}
\end{equation}
\end{proposition}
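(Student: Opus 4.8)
The plan is to apply the cut rule \eqref{EC1} with the target premise pair $(f,b)$, using $a$ and $b$ as the two intermediate conclusions and the hypothesis $(a,b)\vdash c$ as the cutting entailment. Concretely, instantiating \eqref{EC1} with premise pair $(f,b)$, intermediate conclusions $a$ and $b$, and final conclusion $c$ reduces the goal $(f,b)\vdash c$ to establishing the two auxiliary entailments $(f,b)\vdash a$ and $(f,b)\vdash b$, since the third hypothesis of the rule, $(a,b)\vdash c$, is exactly what we are given.

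First I would derive $(f,b)\vdash a$. By \eqref{EC2} we have $(f,b)\vdash f\vee a$, and because $f\le a$ gives $f\vee a=a$, this is precisely $(f,b)\vdash a$. This is the step that actually consumes the hypothesis $f\le a$.

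The step I expect to be the main obstacle---though it is mild---is obtaining $(f,b)\vdash b$, because \eqref{EC2} only lets us weaken through the \emph{first} coordinate: applied directly to $(f,b)$ it yields conclusions of the form $f\vee x$, and never $b$ in isolation. Here the symmetry axiom \eqref{EC4} is what makes the argument go through. I would first apply \eqref{EC2} to the swapped pair $(b,f)$, taking the free conclusion-parameter to be $0$, to obtain $(b,f)\vdash b\vee 0=b$, and then invoke \eqref{EC4} to rotate this back to $(f,b)\vdash b$.

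Finally, feeding $(f,b)\vdash a$, $(f,b)\vdash b$, and the hypothesis $(a,b)\vdash c$ into \eqref{EC1} yields $(f,b)\vdash c$, completing the argument. The overall shape mirrors the companion statement for weakening on the right, except that here the combination of \eqref{EC2} with the symmetry axiom \eqref{EC4} is what lets us transport the weakening to the left-hand premise.
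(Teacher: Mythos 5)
Your proof is correct and follows exactly the same route as the paper's: derive $(f,b)\vdash a$ from \eqref{EC2} using $f\le a$, derive $(f,b)\vdash b$ from \eqref{EC2} together with \eqref{EC4}, and then cut with the hypothesis $(a,b)\vdash c$ via \eqref{EC1}. Your write-up merely spells out the instantiations (e.g.\ $f\vee a=a$ and the swap through $(b,f)$) that the paper leaves implicit.
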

\begin{proof}
    Suppose that $(a,b)\vdash c$ and $f\leq a$. Thus, by \eqref{EC2} we obtain that $(f,b)\vdash a$ and by \eqref{EC2} and \eqref{EC4} that $(f,b)\vdash b$. From these, by \eqref{EC1}, it follows that $(f,b)\vdash c$.
\end{proof}

\begin{proposition}\label{prop:EC3}
    Modulo remaining axioms, \eqref{EC3} is equivalent to \eqref{ExtCA3}.
\end{proposition}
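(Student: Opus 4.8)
The plan is to establish the two implications separately, treating the remaining axioms---namely \eqref{EC0}/\eqref{ExtCA0}, \eqref{EC1}/\eqref{ExtCA1}, \eqref{EC2} (equivalent to \eqref{ExtCA2} by Proposition~\ref{prop:EC1-EC2}) and \eqref{EC4}/\eqref{ExtCA4}---as available throughout. One direction is immediate: \eqref{EC3} is simply the instance $b = a$ of \eqref{ExtCA3}, since then $a \wedge b = a$. So \eqref{ExtCA3} entails \eqref{EC3} with no further work.

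For the converse, I would show that \eqref{EC3} together with the remaining axioms yields \eqref{ExtCA3}. Assume $(a,b)\vdash f$; the goal is $a \wedge b \le f$. The key observation is that \eqref{EC3} only ``sees'' pairs of the form $(x,x)$, so the task reduces to transporting the premise $(a,b)\vdash f$ to the diagonal pair $(a\wedge b,\,a\wedge b)\vdash f$. This is exactly what the left-strengthening property \eqref{eq:ECA-stronger-left}, already proved above, lets me do: since $a\wedge b \le a$, from $(a,b)\vdash f$ I get $(a\wedge b,\,b)\vdash f$; applying \eqref{EC4} gives $(b,\,a\wedge b)\vdash f$, and then \eqref{eq:ECA-stronger-left} once more, using $a\wedge b \le b$, produces $(a\wedge b,\,a\wedge b)\vdash f$. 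Now \eqref{EC3} applies directly and delivers $a\wedge b \le f$, which is \eqref{ExtCA3}.

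The argument is short because the real work has been front-loaded into \eqref{eq:ECA-stronger-left}. The only point that requires a little care is the need to strengthen \emph{both} coordinates down to $a\wedge b$, which is why the symmetry axiom \eqref{EC4} is invoked in the middle: \eqref{eq:ECA-stronger-left} strengthens only the first premise, so one must flip the two arguments to bring the second premise into position before strengthening it as well. I do not expect any genuine obstacle beyond this bookkeeping, since each step is a direct instance of a previously established fact.
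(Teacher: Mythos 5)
Your proposal is correct and follows essentially the same route as the paper's proof: both directions are handled identically, and the converse reduces $(a,b)\vdash f$ to $(a\wedge b,a\wedge b)\vdash f$ via \eqref{eq:ECA-stronger-left} and \eqref{EC4} before invoking \eqref{EC3}. You merely spell out the intermediate flip-and-strengthen steps that the paper compresses into one sentence.
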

\begin{proof}
    One direction is immediate. For the other, assume that \eqref{EC3} obtains and that $(a,b)\vdash f$. Using \eqref{eq:ECA-stronger-left} and \eqref{EC4} we conclude that $(a\wedge b,a\wedge b)\vdash f$, and so $a\wedge b\leq f$, by the assumption.
\end{proof}

The extended contact is a more expressive counterpart of the standard binary contact relation $\con$. The latter can be defined by means of $\vdash$ as follows
\[
a\con b\iffdef (a,b)\nvdash 0
\]
but $\vdash$ cannot be captured in the standard language of Boolean Contact Algebras (see \citealp[Propositions 2.1 and 3.1]{Ivanova-ECAATC}). Similarly, as in the case of the contact relation, the standard interpretation of extended contact is topological, that is for a topological space $(X,\tau)$, any subalgebra $S$ of the Boolean algebra $\RC(\tau)$ of regular closed subsets of $X$  with the ternary relation $\vdash_{\tau}$ given by
\[
(A,B)\vdash_{\tau} C\iffdef A\cap B\subseteq C
\]
is an extended contact algebra. The operation $\cap$ on the right side of the equivalence is the usual set-theoretical intersection, that, in general, is different from the meet operation of $\RC(\tau)$, which for any $A,B$ is given by $A\wedge B\defeq\Cl\Int(A\cap B)$, the closure of the interior of the intersection. \citet[Theorem 6.4]{Ivanova-ECAATC} proves that every extended contact algebra can be embedded into the regular closed algebra of a compact, $T_0$, semi-regular space. Different, so-called relational, representations of (finite) ECAs can be found in \citep{Balbiani-et-al-RRTFECA}.

\begin{proposition}[{\citealp[Proposition 3]{Balbiani-et-al-RRTFECA}}]\label{Props ECA}
    Let $(\mathbf{A}, \vdash)$ be an extended contact algebra. For all \(a, b, d, e \in A\), the following conditions hold:  
\begin{align}
    &(0, 1) \vdash a.\label{eq:BI-1}\\
    &\text{If \((a, b) \vdash c\) and \((x, b) \vdash c\), then \((a \vee x, b) \vdash c\).}\label{eq:BI-2}\\
    &\text{If \((a, b) \vdash c\) and \(d \leq a\), then \((d, b) \vdash c\).}\label{eq:BI-3}\\
    &\text{If $(a,b)\vdash c$ and $c\leq d$, then $(a,b)\vdash d$.}\label{eq:BI-4}
\end{align}
\end{proposition}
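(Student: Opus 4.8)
The plan is to dispatch three of the four conditions immediately from material already established and to concentrate the real work on \eqref{eq:BI-2}. For \eqref{eq:BI-1}, I would simply instantiate \eqref{EC2} with first region $0$, second region $1$ and parameter $a$, obtaining $(0,1)\vdash 0\vee a = a$. Condition \eqref{eq:BI-3} is nothing but \eqref{eq:ECA-stronger-left} (taking the smaller region to be $d$), and \eqref{eq:BI-4} is exactly \eqref{eq:ECA-weaker-right}; both have already been proved, so I would just cite them.

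The substantive part is \eqref{eq:BI-2}, join-additivity in the first premise, and here the strategy is to run the cut rule \eqref{EC1} with the pair $(a\vee x,b)$ as the common antecedent. First I would produce two facts derivable from this pair: applying \eqref{EC0} to the hypothesis $(a,b)\vdash c$ with disjunct $x$ yields $(a\vee x,b)\vdash c\vee x$, while $(a\vee x,b)\vdash b$ follows at once from \eqref{EC2} and \eqref{EC4}. These will serve as the two intermediate conclusions $d\defeq c\vee x$ and $e\defeq b$ feeding the cut.

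It then remains to supply the cutting premise $(c\vee x,b)\vdash c$. I would obtain this by applying \eqref{EC0} to the second hypothesis $(x,b)\vdash c$ with disjunct $c$, since $(x\vee c,b)\vdash c\vee c$ collapses to $(c\vee x,b)\vdash c$. With $(a\vee x,b)\vdash c\vee x$, $(a\vee x,b)\vdash b$ and $(c\vee x,b)\vdash c$ in hand, a single application of \eqref{EC1} delivers $(a\vee x,b)\vdash c$, as required.

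The main obstacle is precisely the choice of intermediate terms for the cut. The naive attempt---weakening both hypotheses by \eqref{EC0} to get $(a\vee x,b)\vdash c\vee x$ and $(a\vee x,b)\vdash c\vee a$, and then cutting against $(c\vee x,c\vee a)\vdash c$---fails, because by \eqref{EC3} (in its \eqref{ExtCA3} form) that last sequent would force $a\wedge x\le c$, which the hypotheses do not guarantee. The trick that unblocks the argument is to use the trivially available conclusion $b$ as the second intermediate term, so that the cutting premise $(c\vee x,b)\vdash c$ is genuinely derivable from the second hypothesis rather than demanding an unavailable meet inequality.
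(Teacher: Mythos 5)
Your proposal is correct. Note that the paper itself offers no proof of this proposition—it is imported by citation from Balbiani and Ivanova—so there is no internal argument to compare against; what you have supplied is a self-contained derivation from the paper's axiom system \eqref{EC0}--\eqref{EC4}, which is a genuine addition. The easy parts check out: \eqref{eq:BI-1} is \eqref{EC2} with $0\vee a=a$; \eqref{eq:BI-3} is literally \eqref{eq:ECA-stronger-left}; \eqref{eq:BI-4} is literally \eqref{eq:ECA-weaker-right}. For \eqref{eq:BI-2}, each step is a legitimate instance of the axioms: \eqref{EC0} on $(a,b)\vdash c$ with disjunct $x$ gives $(a\vee x,b)\vdash c\vee x$; \eqref{EC2} plus \eqref{EC4} give $(a\vee x,b)\vdash b$; \eqref{EC0} on $(x,b)\vdash c$ with disjunct $c$ gives $(c\vee x,b)\vdash c\vee c=c$; and \eqref{EC1} with intermediates $d=c\vee x$, $e=b$ closes the argument. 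Your remark on why the symmetric choice of intermediates $(c\vee x,c\vee a)$ fails is also sound: by \eqref{ExtCA3} that cut premise would require $(c\vee x)\wedge(c\vee a)=c\vee(x\wedge a)\le c$, i.e.\ $a\wedge x\le c$, which is not available. Using the trivially derivable conclusion $b$ as the second intermediate is exactly the right move.
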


We conclude this section by presenting some properties of the characteristic function of~$\vdash$. In a sense that will be made explicit in the following section, this result reveals the algebraic behavior that underlies ECAs.

\begin{lemma}\label{EC as 3BAMO}
 Let $(\mathbf{A} \vdash)$ be an extended contact algebra and let us write $\chi_{\vdash}$ for the characteristic function of $\vdash$. Then, for every $a,b,c,x\in A$ the following hold:
 \begin{gather}
\chi_{\vdash}(0,b,c)=\chi_{\vdash}(a,0,c)=\chi_{\vdash}(a,b,1)=1\,,\label{eq:ch-1}\\
\chi_{\vdash}(a\vee x,b,c)=\chi_{\vdash}(a,b,c)\wedge \chi_{\vdash}(x,b,c)\,,\label{eq:ch-2}\\
\chi_{\vdash}(a,b\vee x,c)=\chi_{\vdash}(a,b,c)\wedge \chi_{\vdash}(a,x,c)\,,\label{eq:ch-3}\\
\chi_{\vdash}(a,b,c\wedge x)\leq \chi_{\vdash}(a,b,c)\wedge \chi_{\vdash}(a,b,x)\,.\label{eq:ch-4}
 \end{gather}
\end{lemma}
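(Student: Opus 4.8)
The plan is to exploit the fact that $\chi_{\vdash}$ takes values in the two-element Boolean algebra $\{0,1\}$, so that each of the four assertions is merely a reformulation of a statement about membership in $\vdash$. Concretely, $\chi_{\vdash}(a,b,c)=1$ abbreviates $(a,b)\vdash c$; a meet $\chi_{\vdash}(u)\wedge\chi_{\vdash}(v)$ equals $1$ exactly when both triples lie in $\vdash$; and an inequality $\chi_{\vdash}(u)\leq\chi_{\vdash}(v)$ asserts the implication ``if the first triple is in $\vdash$, then so is the second''. Once this dictionary is made explicit, all four claims reduce to applications of the axioms and of the derived rules established earlier in this section.

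For \eqref{eq:ch-1} I would read off all three equalities directly from \eqref{EC2}. Instantiating \eqref{EC2} with $a:=0$ and $f:=c$ gives $(0,b)\vdash 0\vee c=c$, which settles the first equality; the second then follows by symmetry \eqref{EC4}; and instantiating \eqref{EC2} with $f:=1$ yields $(a,b)\vdash a\vee 1=1$, which settles the third. For \eqref{eq:ch-2} I would prove the two inequalities between characteristic values separately. The direction $\chi_{\vdash}(a\vee x,b,c)\leq\chi_{\vdash}(a,b,c)\wedge\chi_{\vdash}(x,b,c)$ uses left-monotonicity: since $a\leq a\vee x$ and $x\leq a\vee x$, the rule \eqref{eq:ECA-stronger-left} transports $(a\vee x,b)\vdash c$ down to both $(a,b)\vdash c$ and $(x,b)\vdash c$. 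The converse inequality is precisely the join-in-the-first-argument rule \eqref{eq:BI-2}. Equation \eqref{eq:ch-3} is then immediate: conjugating by \eqref{EC4} swaps the first two arguments, so \eqref{eq:ch-3} is just \eqref{eq:ch-2} applied to the swapped triples.

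Finally, \eqref{eq:ch-4} follows from right-monotonicity: since $c\wedge x\leq c$ and $c\wedge x\leq x$, the rule \eqref{eq:ECA-weaker-right} carries $(a,b)\vdash c\wedge x$ to both $(a,b)\vdash c$ and $(a,b)\vdash x$, giving the stated inequality. The one point that deserves comment---and the reason \eqref{eq:ch-4} is an inequality rather than an equality, in contrast to \eqref{eq:ch-2}--\eqref{eq:ch-3}---is that the converse implication genuinely fails: from $(a,b)\vdash c$ and $(a,b)\vdash x$ one cannot in general infer $(a,b)\vdash c\wedge x$. Indeed, a cut via \eqref{EC1} (with $d:=c$, $e:=x$, $f:=c\wedge x$) would require $(c,x)\vdash c\wedge x$, which is not forced by the axioms; in the topological model it would demand $c\cap x\subseteq c\wedge x=\Cl\Int(c\cap x)$, which is false in general. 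Thus the asymmetry between the third argument and the first two is real, and I expect no step to present a substantial obstacle beyond keeping the characteristic-function dictionary straight and flagging this failure of the converse.
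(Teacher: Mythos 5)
Your proof is correct and follows essentially the same route as the paper: \eqref{eq:ch-1} from \eqref{EC2} and \eqref{EC4}, \eqref{eq:ch-2} from left-monotonicity together with \eqref{eq:BI-2}, \eqref{eq:ch-3} by swapping via \eqref{EC4}, and \eqref{eq:ch-4} from right-monotonicity; the only cosmetic difference is that you cite the internally derived rules \eqref{eq:ECA-stronger-left} and \eqref{eq:ECA-weaker-right} where the paper cites the identical statements \eqref{eq:BI-3} and \eqref{eq:BI-4}. Your closing remark correctly explaining why \eqref{eq:ch-4} cannot be strengthened to an equality is a nice addition, though not required.
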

\begin{proof}
    \eqref{eq:ch-1} Since $0\leq c$, then, by \eqref{EC2}, $(0,b)\vdash c$, so $\chi_{\vdash}(0,b,c)=1$. Similarly, by \eqref{EC2} and \eqref{EC4}, we obtain that $\chi_{\vdash}(a,0,c)=1$. Finally, because $a\leq 1$, from \eqref{EC2} again we obtain $(a,b)\vdash 1$, and thus $\chi_{\vdash}(a,b,1)=1$.

\smallskip
    
    \eqref{eq:ch-2} For the left-to-right direction, let $\chi_{\vdash}(a\vee x,b,c)=1$. Thus, $(a\vee x,b)\vdash c$, and from this by \eqref{eq:BI-3} applied twice we obtain that  $(a,b)\vdash c$ and $(x,b)\vdash c$. In consequence, $\chi_{\vdash}(a,b,c)=1$ and $\chi_{\vdash}(x,b,c)=1$. Thus, $\chi_{\vdash}(a\vee x,b,c)\leq \chi_{\vdash}(a,b,c)\wedge \chi_{\vdash}(x,b,c)$. 

    \smallskip

    For the right-to-left direction, suppose that $\chi_{\vdash}(a,b,c)=\chi_{\vdash}(x,b,c)=1$. Then $(a,b)\vdash c$ and $(x,b)\vdash c$. \eqref{eq:BI-2} implies that $(a\vee x,b)\vdash c$, so $\chi_{\vdash}(a\vee x,b,c)=1$. Therefore $\chi_{\vdash}(a,b,c)\wedge \chi_{\vdash}(x,b,c)\leq \chi_{\vdash}(a\vee x,b,c)$. Joining both directions, we obtain the equality.

\smallskip

    \eqref{eq:ch-3} Observe that \eqref{EC4} implies that for every $a,b,c\in A$, $\chi_{\vdash}(a,b,c)=\chi_{\vdash}(b,a,c)$. Then, by \eqref{eq:ch-2} we obtain 
\begin{align*}
\chi_{\vdash}(a,b\vee x,c)&{}=\chi_{\vdash}(b\vee x,a,c)\\
     &{}=\chi_{\vdash}(b,a,c)\wedge \chi_{\vdash}(x,a,c) \\
      &{}=\chi_{\vdash}(a,b,c)\wedge \chi_{\vdash}(a,x,c).
\end{align*}

    \eqref{eq:ch-4} Suppose that $\chi_{\vdash}(a,b,c\wedge x)=1$, i.e., $(a,b)\vdash c\wedge x$. By \eqref{eq:BI-4} we obtain $(a,b)\vdash c$ and $(a,b)\vdash x$. Therefore, $\chi_{\vdash}(a,b,c)\wedge \chi_{\vdash}(a,b,x)=1$. Thus, $\chi_{\vdash}(a,b,c\wedge x)\leq \chi_{\vdash}(a,b,c)\wedge \chi_{\vdash}(a,b,x)$.

This concludes the proof.
\end{proof}

\section{Pseudo-Inference Algebras}\label{Section 3}

In this section, we introduce the variety of Pseudo-Inference Algebras and show that there is a one-to-one correspondence between its subclass, consisting of the so-called \emph{relational algebras}, and the class of ECAs. 

\begin{definition}
    A Boolean algebra with a ternary weakly normal monotonic operator (3BAMO for short), is a pair $(\mathbf{A}, \Diamond)$ where $\mathbf{A}=(A, \vee, \wedge, \neg,0,1)$ is a Boolean algebra, and $\Diamond$ is a ternary operator on $A$ such that for every $a,b,c,x\in A$ the following hold:
\begin{gather}
\Diamond(0,b,c)=\Diamond(a,0,c)=\Diamond(a,b,0)=0\,,\label{MO1}\tag{MO1}\\
\Diamond(a\vee x,b,c)=\Diamond(a,b,c)\vee \Diamond(x,b,c)\,,\label{MO2}\tag{MO2}\\
\Diamond(a,b\vee x,c)=\Diamond(a,b,c)\vee \Diamond(a,x,c)\,,\label{MO3}\tag{MO3}\\
\Diamond(a,b,c)\vee \Diamond(a,b,x)\leq \Diamond(a,b,c\vee x)\,.\label{MO4}\tag{MO4}
\end{gather}\QED
\end{definition}
As we see, $\Diamond$ behaves like the standard modal possibility operator in the first two coordinates, but is only monotonic in the third coordinate. Any 3BAMO is a special case of a Boolean algebra with $n$-ary monotonic operator (a BAMO) introduced in \cite[Definition 1]{Celani2009}.

\begin{example}\label{example 3BAMO}
The following is an example of a 3BAMO defined on the four-element Boolean algebra $\{0,a,b,1\}$, with $b=\neg a$. We only exhibit 
the values of the operation $\Diamond$ that are different from zero. 
In all other cases, $\Diamond(x,y,z) = 0$.

\[
\begin{aligned}
\Diamond(1,1,1) &= 1, & \Diamond(1,1,a) &= a, & \Diamond(1,1,b) &= 1,\\
\Diamond(1,a,1) &= 1, & \Diamond(1,a,a) &= a, &
\Diamond(1,b,1) &= 1, \\ \Diamond(1,b,b) &= 1,&
\Diamond(a,1,1) &= 1, & \Diamond(a,1,a) &= a,\\
\Diamond(a,a,1) &= 1, & \Diamond(a,a,a) &= a,&
\Diamond(b,1,1) &= 1, \\ \Diamond(b,1,b) &= 1,&
\Diamond(b,b,1) &= 1, & \Diamond(b,b,b) &= 1.
\end{aligned}
\]
Note that $\Diamond(a,a,a) \vee \Diamond(a,a,b) < \Diamond(a,a,1)$.\QED
\end{example}

To obtain the variety of Pseudo-Inference Algebras we extend the 3BAMO axioms with additional postulates corresponding to axioms \eqref{EC0}--\eqref{EC4}.

\begin{definition}\label{def:pseudo-inference algebra}
    A 3BAMO $(\mathbf{A}, \Diamond)$ is called a \emph{Pseudo-Inference Algebra} (PSI-Algebra for short), if for every $a,b,c,d,f\in B$, the following conditions hold: 
    \begin{gather}
    \Diamond(a,b,f)\leq \Diamond(a,b,\neg d)\vee \Diamond(a,b,\neg e) \vee  \Diamond(d,e,f)\,,\tag{PI1}\label{PI1}\\
    \Diamond(a,b, \neg a)=0\,,\tag{PI2}\label{PI2}\\
    a\wedge f\leq \Diamond(a,a,f)\,,\tag{PI3}\label{PI3}\\
    \Diamond(a,b,f)\leq \Diamond(b,a,f)\,.\tag{PI4}\label{PI4}
    \end{gather}\QED
\end{definition}

The proof of the following result is a straightforward consequence of \eqref{MO3} and \eqref{MO4}. The details are left to the reader.

\begin{proposition}\label{quasiecuation PI2}
    \eqref{PI2} is equivalent to the equation 
    \begin{equation}
        \Diamond(a,1,\neg a)=0
    \end{equation}
    and to each of the two quasi-equations below
    \begin{align}
        a\wedge f &{}= 0 \Rightarrow \Diamond(a,b,f)=0\,,\\
        a\wedge f &{}= 0 \Rightarrow \Diamond(a,1,f)=0\,.
    \end{align}
\end{proposition}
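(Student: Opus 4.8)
The plan is to prove the four conditions mutually equivalent by closing a short cycle of implications, relying on essentially two ingredients: the monotonicity of $\Diamond$ in its second and third arguments, and the Boolean identity stating that $a\wedge f=0$ holds if and only if $f\leq\neg a$.

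First I would record the monotonicity, which is exactly the ``straightforward consequence of \eqref{MO3} and \eqref{MO4}'' alluded to in the text. Whenever $b\leq b'$ we have $b'=b\vee b'$, so \eqref{MO3} gives $\Diamond(a,b',c)=\Diamond(a,b,c)\vee\Diamond(a,b',c)\geq\Diamond(a,b,c)$; and whenever $c\leq c'$ we have $c'=c\vee c'$, so \eqref{MO4} gives $\Diamond(a,b,c')=\Diamond(a,b,c\vee c')\geq\Diamond(a,b,c)\vee\Diamond(a,b,c')\geq\Diamond(a,b,c)$. Thus $\Diamond$ is order-preserving in its second and third coordinates, and this is the only preparatory fact needed.

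Next come the implications. The equivalence between \eqref{PI2} and the equation $\Diamond(a,1,\neg a)=0$ is immediate: taking $b=1$ in \eqref{PI2} yields the equation, and conversely, since $b\leq 1$, monotonicity in the second coordinate gives $\Diamond(a,b,\neg a)\leq\Diamond(a,1,\neg a)=0$. For the quasi-equations I would argue as follows. Assuming \eqref{PI2}, suppose $a\wedge f=0$, i.e.\ $f\leq\neg a$; monotonicity in the third coordinate then gives $\Diamond(a,b,f)\leq\Diamond(a,b,\neg a)=0$, which is the first quasi-equation (and, with $b=1$, the second). Conversely, substituting $f=\neg a$ into either quasi-equation, the hypothesis $a\wedge\neg a=0$ being automatic, recovers \eqref{PI2} (respectively the equation $\Diamond(a,1,\neg a)=0$). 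Finally, the two quasi-equations are interderivable: one specialises to the other by setting $b=1$, while the general one follows from the $b=1$ version since $b\leq 1$ and $\Diamond$ increases in the second argument.

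I do not expect any genuine obstacle here; the whole argument is bookkeeping around the equivalence $a\wedge f=0\Leftrightarrow f\leq\neg a$ and the correct direction of monotonicity. The only point requiring a moment's care is to set up monotonicity in the intended direction before invoking it, and to notice that the apparently stronger quasi-equation (with arbitrary $b$) and the weaker one (with $b=1$) coincide precisely because $\Diamond$ is order-preserving in its second coordinate.
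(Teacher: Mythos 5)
Your proof is correct and matches the route the paper intends: the paper omits the proof entirely, remarking only that the result is ``a straightforward consequence of \eqref{MO3} and \eqref{MO4},'' and the monotonicity of $\Diamond$ in its second and third coordinates that you extract from those axioms, combined with the Boolean equivalence $a\wedge f=0\Leftrightarrow f\leq\neg a$, is exactly the intended argument.
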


\begin{example}\label{smallest Diamond}
Every Boolean algebra
can be turned into a PSI-Algebra by defining
\[
\Diamond(a,b,c)\defeq a\wedge b\wedge c\,.
\]
Indeed, it is not hard to see that distributivity guarantees that \eqref{MO1}–\eqref{MO4} hold.  
To verify \eqref{PI1}, we need to check that the following inequality
\begin{equation}\label{Eq. example}
a\wedge b\wedge f\leq (a\wedge b\wedge \neg d)\vee (a\wedge b\wedge \neg e)\vee (d\wedge e\wedge f)
\end{equation}
holds for every $a,b,d,e,f\in B$. We start by noticing that
\begin{gather*}
a\wedge b\leq (a\wedge b)\vee [d\wedge e\wedge f]\eqdef A\,,\tag{1}\label{1}\\
f\leq f\vee \neg (d\wedge e)\eqdef B\,.\tag{2}\label{2}
\end{gather*}

Since $B=f\vee \neg (d\wedge e)=(d\wedge e\wedge f)\vee \neg (d\wedge e)$, it is easy to see that
\[
\begin{array}{rcl}
A\wedge B &=& [(a\wedge b)\vee (d\wedge e\wedge f)] \wedge [f\vee \neg (d\wedge e)]\\[1mm]
          &=& (a\wedge b\wedge \neg d)\vee (a\wedge b\wedge \neg e)\vee (d\wedge e\wedge f),
\end{array}
\]
and so \eqref{1} and \eqref{2} together yield \eqref{Eq. example}, as claimed.  
Moreover, since $a\wedge f=\Diamond(a,a,f)$, condition \eqref{PI3} holds.  
It is routine to check that \eqref{PI2} and \eqref{PI4} obtain, too.\QED
\end{example}
Furthermore,  if we order the ternary operators on a Boolean algebra in the pointwise manner, the operator defined in Example~\ref{smallest Diamond} is the smallest one that turns a Boolean algebra into a PSI-Algebra. 
This fact can be made precise with the following proposition.

\begin{proposition}
Let $(\mathbf{A},\Diamond)$ be any PSI-Algebra. Then, for all $a,b,f\in A$, we have
\[
a\wedge b\wedge f \leq \Diamond(a,b,f).
\]
\end{proposition}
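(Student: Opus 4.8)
The plan is to reduce the general inequality to axiom \eqref{PI3}, which already settles the diagonal case, by exploiting the fact that $\Diamond$ is order-preserving in its first two coordinates. So the first thing I would record is this monotonicity. It follows directly from \eqref{MO2} and \eqref{MO3}: if $a \leq a'$, then $a' = a \vee a'$, so \eqref{MO2} gives $\Diamond(a',b,c) = \Diamond(a,b,c) \vee \Diamond(a',b,c) \geq \Diamond(a,b,c)$; by the symmetric argument with \eqref{MO3}, $\Diamond$ is likewise monotone in its second argument.

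The key step is to instantiate \eqref{PI3} not at $a$ but at the element $a \wedge b$. Substituting $a \wedge b$ for the variable $a$ in \eqref{PI3} yields
\[
(a \wedge b) \wedge f \leq \Diamond(a \wedge b,\, a \wedge b,\, f)\,.
\]
It then remains to raise the right-hand side to $\Diamond(a,b,f)$. Since $a \wedge b \leq a$ and $a \wedge b \leq b$, two applications of the monotonicity just established give
\[
\Diamond(a \wedge b,\, a \wedge b,\, f) \leq \Diamond(a,\, a \wedge b,\, f) \leq \Diamond(a,b,f)\,,
\]
the first step relaxing the first coordinate and the second step relaxing the second coordinate. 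Chaining these inequalities with the previous display produces $a \wedge b \wedge f \leq \Diamond(a,b,f)$, as required.

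There is no serious obstacle here: the entire content lies in the single observation that one should apply the diagonal axiom \eqref{PI3} to the meet $a \wedge b$ and then use monotonicity to separate the two coordinates. The only point that deserves explicit verification before the argument can run is the monotonicity of $\Diamond$ in the first two arguments, which I would state and justify first so that the two final inequalities are immediate. As a consistency check, in the minimal operator $\Diamond(a,b,c) = a \wedge b \wedge c$ of Example~\ref{smallest Diamond} every inequality above collapses to an equality, confirming that the bound is sharp.
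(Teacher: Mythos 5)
Your proof is correct and follows essentially the same route as the paper's: both apply \eqref{PI3} to the element $a\wedge b$ and then use the monotonicity in the first two coordinates coming from \eqref{MO2} and \eqref{MO3} to relax $\Diamond(a\wedge b, a\wedge b, f)$ up to $\Diamond(a,b,f)$. The only (immaterial) difference is the order in which the two coordinates are relaxed.
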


\begin{proof}
By \eqref{PI3}, \eqref{MO2}, and \eqref{MO3}, we obtain
\[
a\wedge b\wedge f
\le \Diamond(a\wedge b, a\wedge b, f)
\le \Diamond(a\wedge b, b, f)
\le \Diamond(a, b, f),
\]
as claimed.
\end{proof}

\begin{remark}
Not every 3BAMO is a PSI-Algebra. To see this, consider the 3BAMO introduced in Example~\ref{example 3BAMO}. Observe that
\[
\Diamond(1,a,1) = 1\quad\tand\quad\Diamond(1,a,a) = a\quad\tand\quad\Diamond(1,a,\neg a)=0=\Diamond(\neg a,a,1)\,.
\]
Hence,
\[
\Diamond(1,a,1) > \Diamond(1,a,a)\vee \Diamond(1,a,\neg a)\vee\Diamond(\neg a,a,1),
\]
which shows that condition~\eqref{PI1} fails.\QED
\end{remark}

\begin{definition}
    A PSI-Algebra $(\mathbf{A}, \Diamond)$ is said to be \emph{relational}, if for every $a,b,c\in A$, \[\Diamond(a,b,c)\in \{0,1\}.\]\QED
\end{definition}

It is clear that both classes of 3BAMOs and Pseudo-Inference Algebras, respectively, form varieties.

Our aim now is to extract the algebraic features of ECAs. As shown in Lemma~\ref{EC as 3BAMO}, the characteristic function of the extended contact relation~$\vdash$ behaves similarly to the ternary operation~$\Diamond$ of 3BAMOs. Below, we make this connection precise and we show how Pseudo-Inference Algebras can serve as algebraic models for Extended Contact Algebras. The guiding idea is to bring to light, by means of the characteristic function of~$\vdash$, the behavior of a suitable operator that captures the essential features of the extended contact in purely algebraic terms. We emphasize that this approach extends the ideas developed in~\citep{CJ2022}, \citep{BBSV2016}, and~\citep{BBSV2019} for subordination algebras.

Given an ECA $(\mathbf{A},\vdash)$, we define the ternary operator $\Diamond_{\vdash}:A^3\to A$ associated with $\vdash$ by
\begin{equation}\tag{$\mathrm{df}\,\Diamond_{\vdash}$}\label{relation to diamond}
    \Diamond_{\vdash}(a,b,c)\defeq\neg\chi_{\vdash}(a,b,\neg c)\,.
\end{equation}
Also, for a relational Pseudo-Inference Algebra $(\mathbf{A}, \Diamond)$ we define a ternary relation by means of the operator in the following way
\begin{equation}\tag{$\mathrm{df}\,\mathord{\vdash}_{\Diamond}$}\label{diamond to relation}
    (a,b)\vdash_{\Diamond} c \; \iffdef\; \neg \Diamond(a, b, \neg c)=1\,.
\end{equation} 
        
\begin{lemma}\label{extended contact are relational pseudo-iinference algebras}
    The following hold:
    \begin{enumerate}
        \item If $(\mathbf{A}, \vdash)$ is an extended contact algebra, then   $(\mathbf{A},  \Diamond_{\vdash})$ is a relational Pseudo-Inference Algebra, and $\vdash_{\Diamond_{\vdash}}=\mathord{\vdash}$.
        \item If $(\mathbf{A}, \Diamond)$ is a relational Pseudo-Inference Algebra, then $(\mathbf{A},  \vdash_{\Diamond})$ is an extended contact algebra, and $\Diamond_{\vdash_{\Diamond}}=\Diamond$.
    \end{enumerate}
\end{lemma}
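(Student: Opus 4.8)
The plan is to prove the two parts of Lemma~\ref{extended contact are relational pseudo-iinference algebras} by translating, one by one, the axioms \eqref{EC0}--\eqref{EC4} (together with the 3BAMO-type facts recorded in Lemma~\ref{EC as 3BAMO}) into the operator axioms \eqref{MO1}--\eqref{MO4} and \eqref{PI1}--\eqref{PI4}, exploiting the fact that for a relational structure the characteristic function $\chi_{\vdash}$ takes values in $\{0,1\}$, so that negation of Boolean $0/1$-values corresponds to logical negation. The key computational identity underlying everything is
\[
\Diamond_{\vdash}(a,b,c)=\neg\chi_{\vdash}(a,b,\neg c),
\qquad
\chi_{\vdash}(a,b,c)=\neg\Diamond_{\vdash}(a,b,\neg c),
\]
so each inequality or equation about $\Diamond_{\vdash}$ is, after substituting $c\mapsto\neg c$ and applying De~Morgan and double negation, exactly a dual statement about $\chi_{\vdash}$.

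For part~(1), I would first check that $(\mathbf{A},\Diamond_{\vdash})$ is a 3BAMO: each of \eqref{MO1}--\eqref{MO4} is the De~Morgan dual of the corresponding clause \eqref{eq:ch-1}--\eqref{eq:ch-4} of Lemma~\ref{EC as 3BAMO}. For instance, \eqref{MO1} follows from $\chi_{\vdash}(0,b,c)=\chi_{\vdash}(a,0,c)=\chi_{\vdash}(a,b,1)=1$ by negating and replacing $c$ with $\neg c$; \eqref{MO2} and \eqref{MO3} come from \eqref{eq:ch-2} and \eqref{eq:ch-3} (a join of complements is the complement of a meet); and \eqref{MO4} is the dual of the inequality \eqref{eq:ch-4}. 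Next I would verify \eqref{PI1}--\eqref{PI4}. Here \eqref{PI4} is immediate from \eqref{EC4}, \eqref{PI2} is the dualized form of \eqref{EC3} specialized as $(a,a)\vdash\neg(\neg a)=a$ giving $a\le a$ (so the conclusion always holds, making $\Diamond_{\vdash}(a,b,\neg a)=0$), and \eqref{PI3} dualizes \eqref{EC3} in the form $(a,a)\vdash f\Rightarrow a\le f$. The one genuinely delicate translation is \eqref{PI1}, which must be read as the operator form of the cut rule \eqref{EC1}: unwinding the definition, $\Diamond_{\vdash}(a,b,f)\le\Diamond_{\vdash}(a,b,\neg d)\vee\Diamond_{\vdash}(a,b,\neg e)\vee\Diamond_{\vdash}(d,e,f)$ becomes, after complementation, the implication ``if $(a,b)\vdash\neg\neg d$, $(a,b)\vdash\neg\neg e$ and $(d,e)\vdash\neg f$ all fail to the contrary, then $\dots$'', which I expect to match \eqref{EC1} precisely once the negations are tracked carefully; this bookkeeping is the main obstacle. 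Note that \eqref{EC0} is not separately needed, as it is derivable from the others via \eqref{eq:BI-1}--\eqref{eq:BI-4} and the monotonicity already secured.

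For part~(2), the argument runs symmetrically. Starting from a relational PSI-Algebra, the relation $\vdash_{\Diamond}$ defined by $(a,b)\vdash_{\Diamond}c\iffdef\neg\Diamond(a,b,\neg c)=1$ inherits \eqref{EC2} from \eqref{MO1} (via $\Diamond(a,b,0)=0$ together with monotonicity giving $\Diamond(a,b,\neg(a\vee f))\le\Diamond(a,b,\neg a)=0$ by \eqref{PI2}), \eqref{EC3} from \eqref{PI3}, \eqref{EC4} from \eqref{PI4}, \eqref{EC0} from \eqref{MO2} and \eqref{MO4}, and \eqref{EC1} from \eqref{PI1}. Relationality ensures that $\neg\Diamond(a,b,\neg c)\in\{0,1\}$, so $\vdash_{\Diamond}$ is a genuine $\{0,1\}$-valued relation and the clause ``$=1$'' is not vacuous. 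Finally I would close the loop by verifying the two round-trip identities $\vdash_{\Diamond_{\vdash}}=\mathord{\vdash}$ and $\Diamond_{\vdash_{\Diamond}}=\Diamond$; both are one-line computations, since composing the two defining equations gives $\neg\neg=\mathrm{id}$ on $\{0,1\}$-valued data, where relationality is exactly what guarantees that $\Diamond$ returns $\{0,1\}$-values so the double negation collapses cleanly.
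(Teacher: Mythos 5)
Your overall strategy --- dualizing through the characteristic function and checking the axioms one by one, using Lemma~\ref{EC as 3BAMO} for the 3BAMO part --- is exactly the route the paper takes, and most of your attributions are correct. However, there are two concrete problems. First, your derivation of \eqref{PI2} from \eqref{EC3} does not work: \eqref{EC3} is the implication ``if $(a,a)\vdash f$ then $a\le f$'', so observing that its conclusion $a\le a$ is trivially true tells you nothing about whether the premise holds, and it certainly does not yield $(a,b)\vdash a$, which is what $\Diamond_{\vdash}(a,b,\neg a)=\neg\chi_{\vdash}(a,b,a)=0$ requires. The correct source is \eqref{EC2}: $(a,b)\vdash a\vee f$ with $f=a$ gives $(a,b)\vdash a$, and this is how the paper argues. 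Second, in part~(2) you derive \eqref{EC0} ``from \eqref{MO2} and \eqref{MO4}'' alone, but the decisive step is showing $\Diamond(d,b,\neg(f\vee d))=0$, which needs \eqref{PI2} (via Proposition~\ref{quasiecuation PI2}, since $d\wedge\neg(f\vee d)=0$); \eqref{MO2} and \eqref{MO4} by themselves do not suffice.

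Finally, the one verification you yourself flag as ``the main obstacle'' --- \eqref{PI1} from \eqref{EC1} --- is left as bookkeeping rather than carried out. It is in fact where the relational hypothesis does real work: since $\Diamond_{\vdash}$ is $\{0,1\}$-valued, one argues that if $\Diamond_{\vdash}(a,b,f)=1$ then $(a,b)\nvdash\neg f$, so the contrapositive of \eqref{EC1} (applied with $\neg f$ in place of $f$) forces one of $(a,b)\nvdash d$, $(a,b)\nvdash e$, $(d,e)\nvdash\neg f$, i.e.\ one of the three disjuncts on the right-hand side equals $1$; the case $\Diamond_{\vdash}(a,b,f)=0$ is trivial. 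With these repairs your argument coincides with the paper's proof.
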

\begin{proof}
(1) Let $(\mathbf{A}, \vdash)$ be an extended contact algebra. From \eqref{relation to diamond} and Lemma \ref{EC as 3BAMO}, it is easily seen that $(\mathbf{A},  \Diamond_{\vdash})$ is a 3BAMO such that $\Diamond_{\vdash}[A^3]=\{0,1\}$. Now we prove the conditions \eqref{PI1}--\eqref{PI4}.

\eqref{PI1} Fix arbitrary $a,b,f,d$ and $e$ from $A$. If $\Diamond_{\vdash}(a,b,f)=1$, then, by (\ref{relation to diamond}), we have $(a,b)\nvdash \neg f$. Thus, from \eqref{EC1} we obtain that $(a,b)\nvdash d$ or $(a,b)\nvdash e$ or $(d,e)\nvdash \neg f$. Again by (\ref{relation to diamond}), it must be the case that either $\Diamond_{\vdash}(a,b,\neg d)=1$ or $\Diamond_{\vdash}(a,b,\neg e)=1$ or $\Diamond_{\vdash}(d,e, f)=1$.  From the latter it is clear that \eqref{PI1} holds. Further, it is also true that if $\Diamond_{\vdash}(a,b,f)=0$, then \eqref{PI1} also holds.

\smallskip

\eqref{PI2} From \eqref{EC2} and \eqref{relation to diamond} it follows that $\Diamond_{\vdash}(a,b,\neg a)=0$, as required. 

\smallskip

\eqref{PI3} Let $\Dv(a,a,f)=0$, that is $(a,a)\vdash\neg f$. By \eqref{EC3} we obtain that $a\leq\neg f$, i.e., $a\wedge f=0$, as required.

\smallskip

 \eqref{PI4} is an immediate consequence of \eqref{EC4}.

 \smallskip

 The equality $\vdash_{\Diamond_{\vdash}}=\mathord{\vdash}$ is a straightforward consequence of the definitions (\ref{relation to diamond}) and (\ref{diamond to relation}). 

\smallskip

(2) Let $(\mathbf{A}, \Diamond)$ be a relational Pseudo-Inference Algebra. We are going to show that the conditions \eqref{EC1}--\eqref{EC4} hold for $\vdash_{\Diamond}$.

\smallskip

\eqref{EC0} If $(a,b)\vdash_{\Diamond} f$, then (\ref{diamond to relation}) entails that $\Diamond(a,b,\neg f)=0$. Notice that, in order to prove our claim it is enough to show that $\Diamond(a\vee d,b,\neg(f\vee d))=0$. By \eqref{PI2} and Proposition \ref{quasiecuation PI2}, it is the case that $\Diamond(d,b,\neg(f\vee d))=0$. So, since by assumption $\Diamond$ preserves joins in the first coordinate and it is monotone in the third one, we have
\begin{align*}
\Diamond(a\vee d,b,\neg(f\vee d))&{}=\Diamond(a,b,\neg(f\vee d))\vee \Diamond(d,b,\neg(f\vee d))\\
&{}=\Diamond(a,b,\neg(f\vee d))\leq \Diamond(a,b,\neg f)=0\,.
\end{align*}

\eqref{EC1} Suppose that \((a, b) \vD d\), \((a, b) \vD e\) and \((d, e) \vD f\). Then, by \eqref{diamond to relation} we have that 
\[
\Diamond(a,b,\neg d)=\Diamond(a,b,\neg d)=\Diamond(d,e,\neg f)= 0\,. 
\]
Since by \eqref{PI1} it is the case that $\Diamond(a,b,\neg f)\leq \Diamond(a,b,\neg d)\vee \Diamond(a,b,\neg e) \vee  \Diamond(d,e,\neg f)$, we obtain $\Diamond(a,b,\neg f)=0$. Therefore, again by (\ref{diamond to relation}), we conclude that $(a,b)\vD f$, as required. 

\smallskip

\eqref{EC2} By \eqref{PI2} and monotonicity we have that $\Diamond(a,b,\neg a\wedge \neg f)=0$, so $(a,b)\vdash\neg(\neg a\wedge\neg f)$, and by de Morgan law we obtain $(a,b)\vD a\vee f$.

\smallskip

\eqref{EC3}  Suppose $(a,a)\vD f$. Thus $\Diamond(a,a,\neg f)=0$, and by \eqref{PI3} we obtain that $a\wedge\neg f=0$. It follows that $a\leq f$.

\smallskip

\eqref{EC4} follows from \eqref{PI4} in a straightforward way.

\smallskip

The equality $\Diamond_{\vdash_{\Diamond}}=\Diamond$ is a straightforward consequence of the definitions (\ref{relation to diamond}) and (\ref{diamond to relation}).
\end{proof}

As an immediate consequence of Lemma \ref{extended contact are relational pseudo-iinference algebras} we obtain the following result.

\begin{proposition}\label{relational are ECA}
Relational Pseudo-Inference Algebras and Extended Contact Algebras are in one-to-one correspondence. 
\end{proposition}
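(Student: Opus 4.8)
The plan is to derive Proposition~\ref{relational are ECA} directly from Lemma~\ref{extended contact are relational pseudo-iinference algebras}, since the lemma already supplies both halves of the correspondence together with the two round-trip identities. The proof should simply assemble these pieces into a statement about the two assignments being mutually inverse bijections between the class of relational PSI-Algebras and the class of ECAs (each taken over a fixed non-degenerate Boolean algebra, or more precisely as a correspondence of the pairs themselves).

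First I would fix a non-degenerate Boolean algebra $\mathbf{A}$ and consider the two maps defined by the constructions preceding the lemma: the map $\vdash\mapsto\Diamond_{\vdash}$ sending an extended contact algebra $(\mathbf{A},\vdash)$ to the operator defined in~\eqref{relation to diamond}, and the map $\Diamond\mapsto\mathord{\vdash}_{\Diamond}$ sending a relational PSI-Algebra $(\mathbf{A},\Diamond)$ to the relation defined in~\eqref{diamond to relation}. By Lemma~\ref{extended contact are relational pseudo-iinference algebras}(1), the first map is well-defined: it does land in the class of relational PSI-Algebras. By Lemma~\ref{extended contact are relational pseudo-iinference algebras}(2), the second map is well-defined: it does land in the class of extended contact algebras. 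So both assignments are genuine maps between the two classes.

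Next I would invoke the two equalities established at the end of each part of the lemma to show these maps are mutually inverse. Part~(1) gives $\vdash_{\Diamond_{\vdash}}=\mathord{\vdash}$, which says that starting from an ECA, passing to its operator and then back to a relation recovers the original relation. Part~(2) gives $\Diamond_{\vdash_{\Diamond}}=\Diamond$, which says that starting from a relational PSI-Algebra, passing to its relation and then back to an operator recovers the original operator. Together these two identities state exactly that the composite in either direction is the identity, hence each map is a bijection and they are inverse to one another. This yields the claimed one-to-one correspondence.

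I do not expect any genuine obstacle here, since all the substantive content---well-definedness of both directions and the invertibility identities---is precisely what Lemma~\ref{extended contact are relational pseudo-iinference algebras} asserts. The only point requiring a moment of care is conceptual rather than technical: one must be explicit that the correspondence is stated over a \emph{fixed} carrier Boolean algebra (so that $\Diamond_{\vdash}$ and $\mathord{\vdash}_{\Diamond}$ operate on the same underlying $\mathbf{A}$), and that the two round-trip identities of the lemma are exactly the two conditions ``left inverse'' and ``right inverse'' needed for a bijection. With that observation the proposition follows immediately, so the write-up can be a short paragraph citing the lemma.
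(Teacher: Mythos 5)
Your proposal is correct and matches the paper exactly: the paper states this proposition as an immediate consequence of Lemma~\ref{extended contact are relational pseudo-iinference algebras}, relying on precisely the well-definedness of both assignments and the two round-trip identities $\vdash_{\Diamond_{\vdash}}=\mathord{\vdash}$ and $\Diamond_{\vdash_{\Diamond}}=\Diamond$ that you cite. Your added remark about fixing the carrier Boolean algebra is a reasonable clarification but changes nothing substantive.
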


\begin{corollary}\label{posets ECA RPSI}
    Let $\mathbf{A}$ be a Boolean algebra. The posets  $\mathrm{EC}(\mathbf{A})$ of Extended Contact Algebras on $\mathbf{A}$ ordered by inclusion, and $\mathrm{RPSI}(\mathbf{A})$ of relational PSI-Algebras on $\mathbf{A}$ ordered pointwise, are dually isomorphic.
\end{corollary}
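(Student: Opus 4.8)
The plan is to observe that the substantive work has already been carried out in Lemma~\ref{extended contact are relational pseudo-iinference algebras} and Proposition~\ref{relational are ECA}. These supply a bijection between the underlying sets of $\mathrm{EC}(\mathbf{A})$ and $\mathrm{RPSI}(\mathbf{A})$, realized in one direction by $\mathord{\vdash}\mapsto\Diamond_{\vdash}$ and in the other by $\Diamond\mapsto\mathord{\vdash}_{\Diamond}$, with the two assignments mutually inverse. What remains is purely order-theoretic: I would show that this bijection is order-reversing, so that it is in fact a dual isomorphism of posets. Thus the only thing to verify is how the defining equation~\eqref{relation to diamond} transports one order into the other.

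First I would unwind the two orders. On $\mathrm{EC}(\mathbf{A})$, inclusion of ternary relations amounts to $\mathord{\vdash_1}\subseteq\mathord{\vdash_2}$ iff $\chi_{\vdash_1}\leq\chi_{\vdash_2}$ pointwise on $A^3$, since a triple lies in $\vdash_i$ exactly when its characteristic value is $1$; on $\mathrm{RPSI}(\mathbf{A})$ the order is pointwise comparison of operators. Second, and this is the crux, I would trace the Boolean complement appearing in~\eqref{relation to diamond}, namely $\Diamond_{\vdash}(a,b,c)=\neg\chi_{\vdash}(a,b,\neg c)$. The substitution $c\mapsto\neg c$ in the third argument is a bijection of $A$ onto itself and hence order-neutral, so comparing $\chi_{\vdash_1}(a,b,\neg c)$ with $\chi_{\vdash_2}(a,b,\neg c)$ for all $c$ is the same as comparing $\chi_{\vdash_1}$ with $\chi_{\vdash_2}$ everywhere. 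The \emph{outer} complement, by contrast, reverses order. Combining these observations with~\eqref{relation to diamond} yields the biconditional
\[
\mathord{\vdash_1}\subseteq\mathord{\vdash_2}
\quad\Longleftrightarrow\quad
\Diamond_{\vdash_2}\leq\Diamond_{\vdash_1}\ \text{pointwise},
\]
whose forward direction follows because $\chi_{\vdash_1}\leq\chi_{\vdash_2}$ forces $\neg\chi_{\vdash_2}(a,b,\neg c)\leq\neg\chi_{\vdash_1}(a,b,\neg c)$, and whose backward direction follows by running the same equivalences in reverse, using that $\neg c$ ranges over all of $A$ as $c$ does.

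This single biconditional already delivers the conclusion: it states precisely that the bijection $\mathord{\vdash}\mapsto\Diamond_{\vdash}$ both reflects and reverses the respective orders, which is the definition of a dual isomorphism, and its inverse $\Diamond\mapsto\mathord{\vdash}_{\Diamond}$ is then automatically order-reversing as well. I do not anticipate any genuine obstacle; the only point that requires care is not to conflate the two negations in~\eqref{relation to diamond}. The inner $\neg c$ merely relabels the third coordinate and leaves the ordering untouched, whereas it is the outer complement alone that turns the inclusion order on relations into the reverse pointwise order on operators, which is exactly what accounts for the word \emph{dually} in the statement.
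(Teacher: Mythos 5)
Your proposal is correct and follows exactly the route the paper intends: the paper's proof is the one-line citation of Proposition~\ref{relational are ECA} together with \eqref{relation to diamond} and \eqref{diamond to relation}, and your write-up simply spells out the order-reversal computation (inner $\neg c$ is a relabelling, outer $\neg$ reverses the order) that the paper leaves implicit. No gaps.
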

\begin{proof}
    Straightforward from Proposition~\ref{relational are ECA}, \eqref{relation to diamond} and \eqref{diamond to relation}.
\end{proof}

\begin{remark}
Every Boolean algebra \( \mathbf{A} \) can be naturally turned into a relational PSI-Algebra. Indeed, if we define 
\[
(a,b) \vdash c \;:\leftrightarrow\; a \wedge b \wedge \neg c = 0,
\]
then it is immediate that \( (\mathbf{A}, \vdash) \) is an ECA. Moreover, by \eqref{ExtCA3}, the relation \( \vdash \) is the largest extended contact relation on \( \mathbf{A} \). By Lemma \ref{extended contact are relational pseudo-iinference algebras}, \( (\mathbf{A}, \Diamond_{\vdash}) \) is a relational PSI-Algebra, and by Corollary~\ref{posets ECA RPSI}, it is the smallest relational PSI-Algebra on \( \mathbf{A} \).
\end{remark}

\begin{section}{Relational PSI-algebras}\label{Section 4}
  In this section we show that the class of relational PSI-Algebras generates the subvariety of \emph{strict PSI-Algebras}, namely those PSI-Algebras that satisfy the following additional equations:
\begin{align}
\label{R1}\tag{R1}  &\Diamond(x,y,a)\wedge \neg\Diamond(x,y,b)\leq \Diamond(1,1,a\wedge \neg b),\\
\label{R2}\tag{R2}  &\Diamond(x,a,y)\wedge \neg\Diamond(x,b,y)\leq \Diamond(1,a\wedge \neg b,1),\\
\label{S}\tag{S}  &\Diamond(a,b,c)\leq \mu(\Diamond(a,b,c)),
\end{align}
where $\mu(z)\defeq \neg \Diamond(1,1,\neg z)\wedge \neg \Diamond(1,\neg z,1)$. Moreover, we show that this subvariety is, in fact, a discriminator variety.

\begin{lemma}\label{RelPsi R1,R2,I,S}
    In every relational PSI-Algebra,  \eqref{R1}, \eqref{R2} and \eqref{S} hold. 
\end{lemma}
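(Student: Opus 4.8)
The plan is to exploit relationality to reduce each of the three inequalities to a short case analysis on the $\{0,1\}$-values that $\Diamond$ can take, after first recording two auxiliary observations. The first is that $\Diamond$ is \emph{monotone in each coordinate}: monotonicity in the first and second coordinates is read off from \eqref{MO2} and \eqref{MO3} by letting the smaller argument be absorbed into the join, and monotonicity in the third follows from \eqref{MO4} in the same way (if $c\le x$ then $c\vee x=x$, so $\Diamond(a,b,c)\le\Diamond(a,b,x)$). The second observation is the pair of boundary equivalences, valid in any relational PSI-Algebra,
\[
\Diamond(1,1,c)=1 \text{ iff } c\neq 0, \qquad \Diamond(1,c,1)=1 \text{ iff } c\neq 0 .
\]
For the forward directions, \eqref{MO1} gives $\Diamond(1,1,0)=0$ and $\Diamond(1,0,1)=0$; for the converses, \eqref{PI3} yields $c=1\wedge c\le\Diamond(1,1,c)$ and, combined with monotonicity in the first coordinate, $c=c\wedge 1\le\Diamond(c,c,1)\le\Diamond(1,c,1)$, so that relationality forces the value $1$ as soon as $c\neq 0$.

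With these in hand, \eqref{R1} becomes an implication: since all values lie in $\{0,1\}$, it suffices to show that $\Diamond(x,y,a)=1$ and $\Diamond(x,y,b)=0$ force $\Diamond(1,1,a\wedge\neg b)=1$. I would argue by contraposition. If $a\wedge\neg b=0$, i.e. $a\le b$, then monotonicity in the third coordinate gives $\Diamond(x,y,a)\le\Diamond(x,y,b)$, so the two hypotheses cannot hold simultaneously; hence $a\wedge\neg b\neq 0$, and the first boundary equivalence delivers $\Diamond(1,1,a\wedge\neg b)=1$. The argument for \eqref{R2} is word-for-word the same, replacing monotonicity in the third coordinate by monotonicity in the second and using the second boundary equivalence to evaluate $\Diamond(1,a\wedge\neg b,1)$.

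For \eqref{S} I would simply evaluate $\mu$ at the two possible values of $z:=\Diamond(a,b,c)$. If $z=0$ the left-hand side is $0$ and there is nothing to prove; if $z=1$ then $\neg z=0$, so $\mu(1)=\neg\Diamond(1,1,0)\wedge\neg\Diamond(1,0,1)=1$ by \eqref{MO1}, giving $z\le\mu(z)$ in both cases. I do not expect a serious obstacle: the content of the lemma is precisely that relationality collapses the three inequalities into implications, while \eqref{PI3} and \eqref{MO1} pin down the boundary values $\Diamond(1,1,c)$ and $\Diamond(1,c,1)$. The one point that deserves care is the justification of the two boundary equivalences, since these are exactly what links \eqref{PI3} to the right-hand sides of \eqref{R1} and \eqref{R2}; everything past that is routine Boolean bookkeeping.
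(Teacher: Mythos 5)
Your proposal is correct and follows essentially the same route as the paper: reduce each inequality to a $\{0,1\}$-case analysis, show $a\wedge\neg b\neq 0$ via monotonicity in the relevant coordinate, and then use \eqref{PI3} (together with \eqref{MO2} for the second-coordinate case) to force the right-hand side to equal $1$. The only cosmetic difference is that you isolate the boundary equivalences as a separate step and spell out \eqref{S}, which the paper dismisses as immediate.
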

\begin{proof}
It is immediate that every relational PSI-Algebra satisfies \eqref{S}. 

\smallskip

\eqref{R1} Suppose that $\Diamond(x,y,a)\wedge \neg\Diamond(x,y,b)=1$. Then, $\Diamond(x,y,a)=1$ and $\Diamond(x,y,b)=0$. It follows that $a\wedge \neg b>0$. Indeed, if $a\wedge \neg b=0$, then $a\leq b$, and by \eqref{MO4} we obtain $1=\Diamond(x,y,a)\leq \Diamond(x,y,b)=0$, a contradiction. By \eqref{PI3}, we have $0<a\wedge \neg b\leq \Diamond(1,1,a\wedge \neg b)$, which implies that $\Diamond(1,1,a\wedge \neg b)=1$, as required.

\smallskip

\eqref{R2} Suppose that $\Diamond(x,a,y)\wedge \neg\Diamond(x,b,y)=1$. Then, $\Diamond(x,a,y)=1$ and $\Diamond(x,b,y)=0$. By the same reasoning as before, it follows that $a\wedge \neg b>0$. By \eqref{PI3} and \eqref{MO2}, we have $0<a\wedge \neg b\leq \Diamond(a\wedge \neg b,a\wedge \neg b,1)\leq  \Diamond(1,a\wedge \neg b,1)$, which leads to $\Diamond(1,a\wedge \neg b,1)=1$.  

This concludes the proof.
\end{proof}

Let $(\mathbf{A},\Diamond)$ be a PSI-Algebra.      A filter $F$ of $\mathbf{A}$ is said to be \emph{closed} if 
 \[
\Diamond(x_1,x_2,x_3)\to \Diamond(y_1,y_2,y_3)\in F
\]
whenever $x_i\to y_i\in F$ for every $1\leq i\leq 3$. 
In \citep{Celani2009} it was proved that the congruences of 3BAMOs correspond to the closed filters of $\mathbf{A}$. 
Therefore, the congruences of PSI-Algebras are in one-to-one correspondence with their closed filters.

\begin{lemma}\label{Reduction Closed Filters}
     Let $(\mathbf{A}, \Diamond)$ be a PSI-Algebra, and let $F$ be a filter of $\mathbf{A}$. Then, $F$ is closed if and only if, for every $x,y\in A$ the following conditions hold:
     \begin{align}
\label{(Su)}\tag{Su} \text{If}\; a\to b \in F \;\text{then}\; &\Diamond(x,y,a)\to\Diamond(x,y,b)\in F\,.\\
\label{(Mid)}\tag{Mid} \text{If}\; a\to b \in F \;\text{then}\; &\Diamond(x,a,y)\to\Diamond(x,b,y)\in F\,.
\end{align}
\end{lemma}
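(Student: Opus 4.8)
The plan is to prove the two implications separately, the forward one being routine and the reverse one carrying the real content. For the forward direction, assume $F$ is closed. To obtain \eqref{(Su)}, suppose $a\to b\in F$ and instantiate the defining condition of closedness at $x_1=y_1=x$, $x_2=y_2=y$, $x_3=a$, $y_3=b$: since $x\to x=y\to y=1\in F$ and $a\to b\in F$ by hypothesis, closedness yields $\Diamond(x,y,a)\to\Diamond(x,y,b)\in F$. The condition \eqref{(Mid)} follows in exactly the same way, letting the second coordinate vary while keeping the first and third fixed.

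For the converse, assume \eqref{(Su)} and \eqref{(Mid)} hold. The first step is to record that \eqref{PI4}, applied also with the first two coordinates interchanged, gives $\Diamond(a,b,c)=\Diamond(b,a,c)$; that is, $\Diamond$ is symmetric in its first two arguments. Using this symmetry together with \eqref{(Mid)} I derive the missing ``first-coordinate'' monotonicity statement: if $a\to b\in F$, then rewriting $\Diamond(a,x,y)=\Diamond(x,a,y)$ and $\Diamond(b,x,y)=\Diamond(x,b,y)$ and applying \eqref{(Mid)} yields $\Diamond(a,x,y)\to\Diamond(b,x,y)\in F$.

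With the three single-coordinate statements in hand, I prove closedness by changing one coordinate at a time. Given $x_i\to y_i\in F$ for $i=1,2,3$, I obtain the chain
\[
\Diamond(x_1,x_2,x_3)\to\Diamond(x_1,x_2,y_3)\in F,\quad
\Diamond(x_1,x_2,y_3)\to\Diamond(x_1,y_2,y_3)\in F,\quad
\Diamond(x_1,y_2,y_3)\to\Diamond(y_1,y_2,y_3)\in F,
\]
the three memberships coming from \eqref{(Su)}, \eqref{(Mid)}, and the first-coordinate statement, respectively. The desired conclusion $\Diamond(x_1,x_2,x_3)\to\Diamond(y_1,y_2,y_3)\in F$ then follows from transitivity of implication inside a filter: in any Boolean algebra $(p\to q)\wedge(q\to r)\leq(p\to r)$, so since $F$ is closed under meets and upward closed, it contains the implication between the first and last terms of the chain.

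The step I expect to require the most care is the first-coordinate case, which is covered neither by \eqref{(Su)} nor by \eqref{(Mid)}. The symmetry extracted from \eqref{PI4} is precisely what bridges this gap, and it explains why the statement lists only two single-coordinate conditions rather than three.
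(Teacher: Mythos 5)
Your proof is correct and follows essentially the same route as the paper: the forward direction by instantiating closedness with $x\to x = y\to y = 1\in F$, and the converse by deriving first-coordinate monotonicity from \eqref{(Mid)} via the symmetry $\Diamond(a,b,c)=\Diamond(b,a,c)$ supplied by \eqref{PI4}, then chaining the three one-coordinate steps with the Boolean identity $(p\to q)\wedge(q\to r)\leq p\to r$. The only difference is the order in which the coordinates are updated, which is immaterial.
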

\begin{proof}
 Let us assume that $F$ is closed. If $a\rightarrow b\in F$, since $x\rightarrow x=y\rightarrow y=1\in F$, then it is immediate that both $\eqref{(Su)}$ and $\eqref{(Mid)}$ hold. For the converse, let us assume that $F$ satisfy both $\eqref{(Su)}$ and $\eqref{(Mid)}$ and suppose that for all $1\leq i\leq 3$, $a_i\rightarrow b_i\in F$. By \eqref{(Mid)} $\Diamond(a_2,a_1,a_3)\to \Diamond(a_2,b_1,a_3)\in F$ and (a) $\Diamond(b_1,a_2,a_3)\to \Diamond(b_1,b_2,a_3)\in F$. Observe that \eqref{PI4} implies that $\Diamond(a_2,a_1,a_3)=\Diamond(a_1,a_2,a_3)$ and $\Diamond(a_2,b_1,a_3)=\Diamond(b_1,a_2,a_3)$, so we get (b) $\Diamond(a_1,a_2,a_3)\rightarrow \Diamond(b_1,a_2,a_3)\in F$. Furthermore, by \eqref{(Su)}, we obtain (c) $\Diamond(b_1,b_2,a_3)\rightarrow \Diamond(b_1,b_2,b_3)\in F$. Thus, from (a) and (b), we get that 
 $$[\Diamond(a_1,a_2,a_3)\to \Diamond(b_1,a_2,a_3)]\wedge [\Diamond(b_1,a_2,a_3)\rightarrow \Diamond(b_1,b_2,a_3)]\in F.$$
 Since in every Boolean algebra the identity $(p\rightarrow q)\wedge (q\rightarrow r)\leq p\rightarrow r$ holds, then it is the case that $\Diamond(a_1,a_2,a_3)\to \Diamond(b_1,b_2,a_3)\in F$. Thus by (c)
 $$[\Diamond(a_1,a_2,a_3)\to \Diamond(b_1,b_2,a_3)]\wedge [\Diamond(b_1,b_2,a_3)\rightarrow \Diamond(b_1,b_2,b_3)] \in F,$$
 and consequently $\Diamond(a_1,a_2,a_3)\to \Diamond(b_1,b_2,b_3)\in F$, which proves that $F$ is closed, as desired. 
\end{proof}

For the following result, we define $\mu^{0}(x)\defeq x$ and $\mu^{l+1}(x)\defeq\mu(\mu^l(x))$, for every $l\geq 1$.

\begin{proposition}\label{Properties of mu}
    Let $(\mathbf{A}, \Diamond)$ be a PSI-Algebra and let $S\subseteq A$. The following hold: 
    \begin{enumerate}
        \item $\mu(0)=0$.
        \item $ \mu(x)\leq x$ for all $x\in A$.
        \item $\mu(z)=1$ if and only if $z=1$.
        \item $\mu$ is a monotone operator.
        \item $\mu^{n+1}(x)\leq \mu^{n}(x)$, for every $n\geq 0$.
    \end{enumerate}
\end{proposition}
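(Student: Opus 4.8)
The plan is to prove each of the five claims in turn, with the definition $\mu(z)=\neg\Diamond(1,1,\neg z)\wedge\neg\Diamond(1,\neg z,1)$ as the working tool, and to exploit the monotonicity axioms \eqref{MO2}, \eqref{MO3}, \eqref{MO4} together with \eqref{PI2} wherever possible.

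First I would establish (1) and (3) directly from the definition. For (1), substituting $z=0$ gives $\mu(0)=\neg\Diamond(1,1,1)\wedge\neg\Diamond(1,1,1)$; the key point is that $\Diamond(1,1,1)=1$, which follows because by \eqref{PI3} we have $1\wedge 1\leq\Diamond(1,1,1)$. For (3), substituting $z=1$ gives $\mu(1)=\neg\Diamond(1,1,0)\wedge\neg\Diamond(1,0,1)=\neg 0\wedge\neg 0=1$ by \eqref{MO1}, giving the backward direction; for the forward direction I would argue that $\mu(z)=1$ forces both $\Diamond(1,1,\neg z)=0$ and $\Diamond(1,\neg z,1)=0$, and then use \eqref{PI3} applied to $\neg z$ together with \eqref{MO2}/\eqref{MO3} to conclude $\neg z=0$, hence $z=1$. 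Claim (2) I would prove by noting that, via \eqref{PI3} and monotonicity in the first two coordinates, $\neg z=1\wedge\neg z\leq\Diamond(1,1,\neg z)$, so $\neg\Diamond(1,1,\neg z)\leq\neg\neg z=z$; since $\mu(z)$ is a meet with $\neg\Diamond(1,1,\neg z)$ as one conjunct, $\mu(z)\leq\neg\Diamond(1,1,\neg z)\leq z$ follows immediately.

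For (4), monotonicity of $\mu$, I would take $x\leq y$ and observe that $\neg y\leq\neg x$, so by \eqref{MO4} (monotonicity in the third coordinate) and \eqref{MO3} (which yields monotonicity in the second) we get $\Diamond(1,1,\neg y)\leq\Diamond(1,1,\neg x)$ and $\Diamond(1,\neg y,1)\leq\Diamond(1,\neg x,1)$. Negating reverses both inequalities, and taking meets preserves the resulting order, so $\mu(x)\leq\mu(y)$. Finally, (5) is the clean payoff: combining (2), which gives $\mu(w)\leq w$ for every $w$, applied with $w=\mu^n(x)$, yields $\mu^{n+1}(x)=\mu(\mu^n(x))\leq\mu^n(x)$ directly, with no induction needed.

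The only point requiring genuine care is the forward direction of (3)---extracting $\neg z=0$ from the vanishing of $\Diamond(1,1,\neg z)$. The obstacle is that \eqref{PI3} only gives $a\wedge f\leq\Diamond(a,a,f)$ in the \emph{diagonal} first two coordinates, so I must first lift to $\Diamond(1,1,\cdot)$. The cleanest route is to note $\neg z=\neg z\wedge\neg z\leq\Diamond(\neg z,\neg z,\neg z)$ by \eqref{PI3}, then apply \eqref{MO2} and \eqref{MO3} to bound this above by $\Diamond(1,1,\neg z)$; if the latter is $0$, then $\neg z=0$. Everything else is a routine manipulation of Boolean negation against the four monotonicity axioms.
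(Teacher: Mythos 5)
Your proof is correct and follows essentially the same route as the paper's: each item is obtained by unwinding the definition of $\mu$ and applying \eqref{PI3}, \eqref{MO1}, and coordinatewise monotonicity of $\Diamond$. Two minor remarks: the ``genuine care'' you flag in the forward direction of (3) is not actually needed, since \eqref{PI3} with $a=1$ and $f=\neg z$ already gives $\neg z = 1\wedge\neg z\leq\Diamond(1,1,\neg z)$ without any lifting (or one can simply invoke item (2), as the paper does); and your induction-free argument for (5), applying item (2) with $w=\mu^{n}(x)$, is in fact slightly cleaner than the paper's inductive proof.
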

\begin{proof}
(1) From  \eqref{PI3}, $1=\Diamond(1,1,1)$, thus $\mu(0)= \neg \Diamond(1,1,1)=0$.

\smallskip

(2) From \eqref{PI3}, $\neg x\leq \Diamond(1,1,\neg x)$, so $\mu(x)\leq \neg \Diamond(1,1,\neg x)\leq x$.

\smallskip

(3) By definition of $\mu$, we obtain $\mu(1)= \neg \Diamond(1,1,0)\wedge \neg \Diamond(1,0,1)$, so from \eqref{MO1}, it follows that $\mu(1)=1$. On the other hand, suppose that $\mu(z)=1$. Then, by (2), $1= \mu(z)\leq z $ and it follows that $z=1$. 

\smallskip

(4) If $x\leq y$, then $\neg y\leq \neg x$. From the monotonicity of $\Diamond$ on each coordinate, it follows that $\neg \Diamond(1,\neg x,1)\leq \neg \Diamond(1,\neg y,1)$ and $\neg \Diamond(1,1,\neg x)\leq \neg \Diamond(1,1,\neg y)$. Therefore, we may conclude that $\mu(x)\leq \mu(y)$.

\smallskip

(5) We proceed by induction on $l$. From (2), $\mu(x)\leq x=\mu^0(x)$.
Suppose that $\mu(x)^{k+1}\leq \mu(x)^{k}$. Then, from (4), $\mu^{k+2}(x)=\mu(\mu^{k+1}(x))\leq \mu(\mu^{k}(x))=\mu^{k+1}(x)$. Hence (5) holds, as claimed. 
\end{proof}

\begin{definition}\label{def Modal filter}
    Let $(\mathbf{A}, \Diamond)$ be a PSI-Algebra. A filter $F$ of $\mathbf{A}$ is said to be \emph{modal} if it is closed under $\mu$.\QED
\end{definition}

\begin{lemma}\label{Closed modal}
    Let $(\mathbf{A}, \Diamond)$ be a PSI-Algebra. Then, every closed filter of $\mathbf{A}$ is modal.
\end{lemma}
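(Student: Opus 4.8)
The plan is to reduce closedness to the two coordinatewise conditions \eqref{(Su)} and \eqref{(Mid)} supplied by Lemma~\ref{Reduction Closed Filters}, and then to exploit the Boolean identity $\neg z\to 0=z$ together with the weak normality axiom \eqref{MO1}. Recall that $F$ is modal exactly when $\mu(z)\in F$ for every $z\in F$, and that $\mu(z)=\neg\Diamond(1,1,\neg z)\wedge\neg\Diamond(1,\neg z,1)$. Since $F$ is a filter and hence closed under meets, it suffices to show separately that $\neg\Diamond(1,1,\neg z)\in F$ and $\neg\Diamond(1,\neg z,1)\in F$ whenever $z\in F$.

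So fix $z\in F$. The key observation is that, in any Boolean algebra, $(\neg z)\to 0=z$, so the hypothesis $z\in F$ is literally the statement $(\neg z)\to 0\in F$. This puts us in a position to apply the two conditions of Lemma~\ref{Reduction Closed Filters} with $a\defeq\neg z$ and $b\defeq 0$.

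First I would apply \eqref{(Su)}: from $(\neg z)\to 0\in F$ it yields $\Diamond(x,y,\neg z)\to\Diamond(x,y,0)\in F$ for all $x,y$, and specializing to $x=y=1$ gives $\Diamond(1,1,\neg z)\to\Diamond(1,1,0)\in F$. By \eqref{MO1} we have $\Diamond(1,1,0)=0$, and since $w\to 0=\neg w$ this reads $\neg\Diamond(1,1,\neg z)\in F$. Symmetrically, applying \eqref{(Mid)} with the same $a,b$ and specializing $x=y=1$ gives $\Diamond(1,\neg z,1)\to\Diamond(1,0,1)\in F$; again $\Diamond(1,0,1)=0$ by \eqref{MO1}, so $\neg\Diamond(1,\neg z,1)\in F$. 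Taking the meet of the two conclusions yields $\mu(z)\in F$, which is what we want.

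The argument is short once the right reformulation is in place, so I do not expect a serious obstacle; the only genuinely non-mechanical step is recognizing that $\mu$ is built precisely from two images of $\Diamond$ that \eqref{MO1} sends to $0$, which is exactly what makes the substitution $b=0$ collapse the right-hand sides of \eqref{(Su)} and \eqref{(Mid)}. I would also double-check that no appeal to the relational hypothesis (that $\Diamond$ is two-valued) is needed: the proof uses only the filter structure, the characterization via \eqref{(Su)} and \eqref{(Mid)}, and \eqref{MO1}, so it applies to \emph{every} PSI-Algebra, as stated.
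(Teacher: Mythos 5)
Your proof is correct and follows essentially the same route as the paper's: both deduce \eqref{(Su)} and \eqref{(Mid)} from closedness via Lemma~\ref{Reduction Closed Filters}, instantiate them at $a=\neg z$, $b=0$, $x=y=1$, and collapse the right-hand sides with \eqref{MO1} to obtain $\neg\Diamond(1,1,\neg z),\neg\Diamond(1,\neg z,1)\in F$ and hence $\mu(z)\in F$. No issues.
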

\begin{proof}
    Let $a\in F$, then $\neg a\rightarrow 0\in F$. By Lemma \ref{Reduction Closed Filters}, $F$ satisfy both \eqref{(Su)} and \eqref{(Mid)}, so $\Diamond(1,1,\neg a)\rightarrow \Diamond(1,1,0)\in F$ and $\Diamond(1,\neg a,1)\rightarrow \Diamond(1,0,1)\in F$ so, by \eqref{MO1}, $\neg\Diamond(1,1,\neg a)\in F$ and $\neg\Diamond(1,\neg a,1)\in F$. Since $F$ is a filter, the latter yields that $\mu(a)\in F$, as desired.
\end{proof}

\begin{lemma}\label{Modal R1R2 closed}
Let $(\mathbf{A}, \Diamond)$ be a PSI-Algebra satisfying \eqref{R1} and \eqref{R2}. Then, every modal filter is closed.  
\end{lemma}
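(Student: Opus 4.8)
The plan is to invoke Lemma~\ref{Reduction Closed Filters}, which reduces closedness of a filter $F$ to the two membership conditions \eqref{(Su)} and \eqref{(Mid)}. So I would fix a modal filter $F$, assume $a\to b\in F$ for given $a,b$, and aim to derive both $\Diamond(x,y,a)\to\Diamond(x,y,b)\in F$ and $\Diamond(x,a,y)\to\Diamond(x,b,y)\in F$ for arbitrary $x,y\in A$.

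The crucial observation I would exploit is that $\mu$, evaluated at $z\defeq a\to b=\neg(a\wedge\neg b)$, packages together exactly the two terms appearing on the right-hand sides of \eqref{R1} and \eqref{R2}. Since $\neg z=a\wedge\neg b$, the definition of $\mu$ gives
\[
\mu(a\to b)=\neg\Diamond(1,1,a\wedge\neg b)\wedge\neg\Diamond(1,a\wedge\neg b,1).
\]
As $F$ is modal and $a\to b\in F$, I get $\mu(a\to b)\in F$; upward closure of $F$ then yields both $\neg\Diamond(1,1,a\wedge\neg b)\in F$ and $\neg\Diamond(1,a\wedge\neg b,1)\in F$. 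This is the step that actually consumes the modality hypothesis.

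For \eqref{(Su)} I would use the Boolean identity $\neg(\Diamond(x,y,a)\to\Diamond(x,y,b))=\Diamond(x,y,a)\wedge\neg\Diamond(x,y,b)$ together with \eqref{R1}, which bounds this meet above by $\Diamond(1,1,a\wedge\neg b)$; complementing gives
\[
\neg\Diamond(1,1,a\wedge\neg b)\leq\Diamond(x,y,a)\to\Diamond(x,y,b),
\]
and since the left side lies in $F$, so does the right, establishing \eqref{(Su)}. The argument for \eqref{(Mid)} is wholly symmetric, replacing \eqref{R1} by \eqref{R2} and using the conjunct $\neg\Diamond(1,a\wedge\neg b,1)\in F$ to obtain $\neg\Diamond(1,a\wedge\neg b,1)\leq\Diamond(x,a,y)\to\Diamond(x,b,y)$.

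I do not anticipate a genuine obstacle: everything rests on recognizing that the two conjuncts of $\mu(a\to b)$ are precisely the complements of the right-hand sides of \eqref{R1} and \eqref{R2}, after which the filter properties and Lemma~\ref{Reduction Closed Filters} finish the proof. The only place demanding care is this matching of the coordinates (third versus second), which is exactly why both \eqref{R1} and \eqref{R2} are needed rather than either one alone.
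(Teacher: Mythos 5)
Your proposal is correct and follows essentially the same route as the paper's proof: reduce to \eqref{(Su)} and \eqref{(Mid)} via Lemma~\ref{Reduction Closed Filters}, use modality to put $\mu(a\to b)=\neg\Diamond(1,1,a\wedge\neg b)\wedge\neg\Diamond(1,a\wedge\neg b,1)$ into $F$, and then apply the complemented forms of \eqref{R1} and \eqref{R2} together with upward closure. The only cosmetic difference is that you split $\mu(a\to b)$ into its two conjuncts before applying upward closure, whereas the paper chains the inequalities directly from $\mu(a\to b)$.
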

\begin{proof}
    Let $F$ be a modal filter of $\mathbf{A}$. By Lemma \ref{Reduction Closed Filters}, in order to prove our claim it is enough to show that $F$ satisfies both \eqref{(Su)} and \eqref{(Mid)}. To do so, suppose that $a\rightarrow b\in F$. Since $F$ is modal, we have that $\mu(a\rightarrow b)\in F$. From \eqref{R1} it follows that
    \[
    \Diamond(x,y,a)\wedge \neg\Diamond(x,y,b)\leq \Diamond(1,1,a\wedge \neg b)
    \]
    and thus
    \[\label{(1)}\tag{1}
    \mu(a\rightarrow b)\leq \neg\Diamond(1,1,\neg (a\rightarrow b))=\neg\Diamond(1,1,a\wedge \neg b)\leq  \Diamond(x,y,a) \rightarrow \Diamond(x,y,b).
    \]
    In a similar fashion, from \eqref{R2}, we obtain
    \[\label{(2)}\tag{2}
    \mu(a\rightarrow b)\leq \neg\Diamond(1,\neg (a\rightarrow b),1)=\neg\Diamond(1,a\wedge \neg b,1)\leq  \Diamond(x,a,y) \rightarrow \Diamond(x,b,y).
    \]
    Therefore, both \eqref{(Su)} and \eqref{(Mid)} hold for $F$, as required.
\end{proof}

If $\mathbf{A}$ is a Boolean algebra, and  $x\in A$, then the set
\[
[x)\defeq\{a\in A\colon x\leq a\}
\]
is the smallest filter of $\mathbf{A}$ containing $x$.

\begin{theorem}\label{relational psi are simple}
  A PSI-Algebra $(\mathbf{A}, \Diamond)$ is relational if and only if it is simple and satisfies \eqref{R1}, \eqref{R2}, and \eqref{S}.
\end{theorem}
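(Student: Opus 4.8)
The plan is to route everything through the dictionary between congruences and closed filters recalled after Lemma~\ref{Reduction Closed Filters}, together with the bridge between \emph{closed} and \emph{modal} filters provided by Lemmas~\ref{Closed modal} and~\ref{Modal R1R2 closed}. Under this dictionary, simplicity of $(\mathbf{A},\Diamond)$ means precisely that its only closed filters are $\{1\}$ and $A$ (two distinct filters, since $\mathbf{A}$ is non-degenerate). Since every relational PSI-Algebra already satisfies \eqref{R1}, \eqref{R2} and \eqref{S} by Lemma~\ref{RelPsi R1,R2,I,S}, the real content is the equivalence, modulo those three equations, between being relational and being simple. I would treat the operator $\mu$ as the pivotal tool in both directions.

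For the forward implication I would assume $(\mathbf{A},\Diamond)$ relational and first pin down the behaviour of $\mu$ on the range of $\Diamond$. By \eqref{PI3} with $a=1$ we have $\neg z\le\Diamond(1,1,\neg z)$, so whenever $z\neq 1$ the element $\Diamond(1,1,\neg z)$ is a nonzero member of $\{0,1\}$, hence equals $1$; thus $\neg\Diamond(1,1,\neg z)=0$ and therefore $\mu(z)=0$. Combined with Proposition~\ref{Properties of mu}(3), this shows that in the relational case $\mu(z)=0$ for every $z\neq 1$. Now any closed filter $F$ is modal by Lemma~\ref{Closed modal}; if $F$ contained some $z\neq 1$, then $\mu(z)=0\in F$ would force $F=A$. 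Hence the only closed filters are $\{1\}$ and $A$, and $(\mathbf{A},\Diamond)$ is simple.

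For the converse I would assume $(\mathbf{A},\Diamond)$ simple and satisfying \eqref{R1}, \eqref{R2}, \eqref{S}. Under \eqref{R1}–\eqref{R2}, Lemmas~\ref{Closed modal} and~\ref{Modal R1R2 closed} give that modal filters coincide with closed filters, so by simplicity the only modal filters are $\{1\}$ and $A$. Fixing $a,b,c$ and setting $z\defeq\Diamond(a,b,c)$, I would combine \eqref{S} with Proposition~\ref{Properties of mu}(2) to get $z\le\mu(z)\le z$, that is $z=\mu(z)$. The key observation is then that the principal filter $[z)$ is modal: for $w\in[z)$, monotonicity of $\mu$ (Proposition~\ref{Properties of mu}(4)) yields $z=\mu(z)\le\mu(w)$, so $\mu(w)\in[z)$. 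By simplicity $[z)$ is either $\{1\}$, giving $z=1$, or $A$, which forces $0\in[z)$ and hence $z=0$. Thus $\Diamond(a,b,c)\in\{0,1\}$ for all $a,b,c$, i.e.\ $(\mathbf{A},\Diamond)$ is relational.

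The one delicate point, and the step I expect to carry the weight, is the fixed-point identity $z=\mu(z)$ on the range of $\Diamond$: this is exactly where \eqref{S} is indispensable, since it is what upgrades each value of $\Diamond$ to a genuine fixed point of the monotone operator $\mu$ and thereby makes its principal filter modal. Everything else is bookkeeping: the equivalence of modal and closed filters (where \eqref{R1}–\eqref{R2} enter) and the translation of simplicity into the statement that the only such filters are $\{1\}$ and $A$.
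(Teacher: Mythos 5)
Your proposal is correct and follows essentially the same route as the paper: Lemma~\ref{RelPsi R1,R2,I,S} for the equations, the observation that $\mu$ annihilates every non-unit element in the relational case to get simplicity via modal/closed filters, and, for the converse, the fixed-point identity $\Diamond(a,b,c)=\mu(\Diamond(a,b,c))$ from \eqref{S} making the principal filter $[\Diamond(a,b,c))$ modal, so that simplicity forces $\Diamond(a,b,c)\in\{0,1\}$. The only differences are cosmetic (you compute $\mu(z)=0$ directly from \eqref{PI3} rather than first noting $\mu(z)\in\{0,1\}$), so there is nothing to add.
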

\begin{proof}
    If $(\mathbf{A}, \Diamond)$ is relational, then the equations hold by Lemma \ref{RelPsi R1,R2,I,S}. It remains to show that $(\mathbf{A}, \Diamond)$ is simple. To do so, we need to prove that the only modal filters of $\mathbf{A}$ are $\{1\}$ and $A$. Suppose that a modal filter $F$ is such that $F\neq \{1\}$. Then, there exists $a\in F$ such that $a\neq 1$. By the assumption, $\mu(a)\in \{0,1\}$ and by Proposition \ref{Properties of mu}(3), it follows that $\mu(a)\neq 1$. Therefore, $\mu(a)=0\in F$ and we obtain $F=A$. Hence, $(\mathbf{A}, \Diamond)$ is simple, as claimed. 
    
   For the converse implication, let $(\mathbf{A},\Diamond)$ be a simple PSI-Algebra satisfying 
\eqref{R1}, \eqref{R2}, and \eqref{S}. 
To show that $(\mathbf{A},\Diamond)$ is relational, 
pick $a,b,c\in A$ and consider the element $\Diamond(a,b,c)$. By hypothesis and by the simplicity of $(\mathbf{A},\Diamond)$, 
the only modal filters of $\mathbf{A}$ are $\{1\}$ and~$A$. 
Moreover, from \eqref{S} 
and Lemma~\ref{Properties of mu}, it follows that $\Diamond(a,b,c)=\mu(\Diamond(a,b,c))$ and by monotonicity of $\mu$, we obtain that $[\Diamond(a,b,c))$ is a modal filter. So, it is either $\{1\}$ or $A$.
In the first case, $[\Diamond(a,b,c))=\{1\}$, hence 
$\Diamond(a,b,c)=1$. In the second case, $[\Diamond(a,b,c))=A$, which 
occurs exactly when $\Diamond(a,b,c)$ is the least element $0$. Therefore, for all $a,b,c\in A$, we have 
$\Diamond(a,b,c)\in\{0,1\}$. In consequence,  $(\mathbf{A},\Diamond)$ is relational, as desired.
\end{proof}

\begin{lemma}\label{S implies good stuff}
    Let $(\mathbf{A}, \Diamond)$ be a PSI-Algebra satisfying \eqref{S}. Then, for all $a\in A$
    \[\neg \mu(a)=\mu(\neg \mu(a)).\]
    Moreover, $\mu^{l}(\neg \mu(a))=\neg \mu(a)$ for every $l\geq 0$.
\end{lemma}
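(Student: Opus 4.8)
The plan is to reduce the statement to the elementary fact that a monotone, deflationary operator sends a join of its fixed points to that same join. First I would unfold the definition of $\mu$ and apply De Morgan's laws. Writing $p\defeq\Diamond(1,1,\neg a)$ and $q\defeq\Diamond(1,\neg a,1)$, the definition $\mu(a)=\neg\Diamond(1,1,\neg a)\wedge\neg\Diamond(1,\neg a,1)$ yields $\neg\mu(a)=p\vee q$, so the first claim amounts to showing that $p\vee q$ is a fixed point of $\mu$.

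Next I would establish that $p$ and $q$ are themselves fixed points of $\mu$; this is the only place where the hypothesis \eqref{S} is used. Applying \eqref{S} to the term $\Diamond(1,1,\neg a)$ gives $p\leq\mu(p)$, while Proposition~\ref{Properties of mu}(2) gives the reverse inequality $\mu(p)\leq p$; hence $\mu(p)=p$. The same reasoning applied to $\Diamond(1,\neg a,1)$ yields $\mu(q)=q$.

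Then I would combine monotonicity with deflationarity. By Proposition~\ref{Properties of mu}(4) the operator $\mu$ is monotone, so from $p\leq p\vee q$ and $q\leq p\vee q$ we obtain $\mu(p)\vee\mu(q)\leq\mu(p\vee q)$; since $\mu(p)=p$ and $\mu(q)=q$, this reads $p\vee q\leq\mu(p\vee q)$. Proposition~\ref{Properties of mu}(2) supplies the opposite inequality $\mu(p\vee q)\leq p\vee q$, and the two together give $\mu(p\vee q)=p\vee q$, that is, $\mu(\neg\mu(a))=\neg\mu(a)$, which is the displayed identity.

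Finally, the \emph{moreover} clause follows by a one-line induction on $l$: the case $l=0$ is immediate, and the inductive step reads $\mu^{l+1}(\neg\mu(a))=\mu(\mu^{l}(\neg\mu(a)))=\mu(\neg\mu(a))=\neg\mu(a)$, using the inductive hypothesis and the identity just proved. I do not anticipate any genuine obstacle here; the single point demanding care is recognising that \eqref{S}, instantiated at the two $\Diamond$-terms occurring inside $\mu(a)$, forces exactly those terms to be $\mu$-fixed, after which the join-of-fixed-points step is purely order-theoretic.
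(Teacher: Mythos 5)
Your proof is correct and follows essentially the same route as the paper's: unfold $\mu$ via De Morgan to write $\neg\mu(a)$ as the join of the two $\Diamond$-terms, apply \eqref{S} to each of those terms, and combine monotonicity with $\mu(x)\leq x$ to close the chain, finishing the \emph{moreover} clause by induction. The only cosmetic difference is that you record $\mu(p)=p$ and $\mu(q)=q$ as intermediate equalities, whereas the paper only needs the one-sided inequalities inside a single chain.
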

\begin{proof}
    It is immediate from the definition of $\mu$, that $\Diamond(1,\neg a,1), \Diamond(1,1,\neg a)\leq \neg \mu(a)$. So, Proposition \ref{Properties of mu} and \eqref{S} yield 
    \[\neg \mu(a)= \Diamond(1,\neg a,1)\vee \Diamond(1,1,\neg a)\leq \mu(\Diamond(1,\neg a,1))\vee  \mu(\Diamond(1,1,\neg a))\leq \mu(\neg \mu(a))\leq \neg \mu(a)\]
as claimed. For the remaining part, observe that 
\[
\mu^2(\neg \mu(a))=\mu(\mu(\neg \mu(a)))=\mu(\neg \mu(a))=\neg \mu(a).
\]
The result follows by a straightforward induction on~$l$.
\end{proof}

\begin{definition}\label{Strict psi-algebras}
    We say that a PSI-Algebra is \emph{strict} if \eqref{R1}, \eqref{R2}, and \eqref{S} hold. We will write $\mathcal{SPSI}$ to denote the variety of strict PSI-Algebras.\QED
\end{definition}

We recall that in every Boolean algebra $\mathbf{A}$ there is a one-to-one correspondence between congruences and filters. Explicitly, if $\theta$ is a congruence of $\mathbf{A}$, then
\[
F_{\theta}\defeq \{\, a \in A : (a,1) \in \theta \,\}
\]
is a filter of $\mathbf{A}$; conversely, if $F$ is a filter of $\mathbf{A}$, then
\[
\theta_{F}\defeq \{\, (a,b) \in A^{2} : a \wedge f = b \wedge f \text{ for some } f \in F \,\}
\]
is a congruence such that 
\[
\theta_{F_{\theta}} = \theta \quad \text{and} \quad F = F_{\theta_{F}}.
\]

\begin{theorem}\label{SPSI is semisimple}
    The variety $\mathcal{SPSI}$ is a semisimple variety whose simple members are precisely the relational PSI-Algebras.
\end{theorem}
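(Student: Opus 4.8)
The plan is to prove the two assertions separately, starting with the identification of the simple members, which is almost immediate. By Definition~\ref{Strict psi-algebras}, $\mathcal{SPSI}$ consists of the PSI-Algebras satisfying \eqref{R1}, \eqref{R2} and \eqref{S}, so Theorem~\ref{relational psi are simple} says precisely that a member of $\mathcal{SPSI}$ is relational if and only if it is simple; this already yields that the simple members of $\mathcal{SPSI}$ are exactly the relational PSI-Algebras. It remains to establish semisimplicity, and for this I would work entirely through the correspondence between congruences and modal filters: congruences of a PSI-Algebra correspond to closed filters by \citep{Celani2009}, and in a strict PSI-Algebra Lemmas~\ref{Closed modal} and~\ref{Modal R1R2 closed} identify the closed filters with the modal filters. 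Since this correspondence is a lattice isomorphism sending $\{1\}$ to the identity congruence, to show that an arbitrary $(\mathbf{A},\Diamond)\in\mathcal{SPSI}$ is a subdirect product of simple algebras it suffices to prove that the intersection of its maximal proper modal filters is $\{1\}$; that is, that every $a\neq 1$ is omitted by some maximal proper modal filter.

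The crux is a single well-chosen filter. Fix $a\neq 1$ and put $c\defeq\neg\mu(a)$. By Proposition~\ref{Properties of mu}(3), $\mu(a)\neq 1$, so $c\neq 0$ and the principal filter $[c)$ is proper. I claim that $[c)$ is modal and omits $a$. Modality follows from Lemma~\ref{S implies good stuff}, which gives $\mu(c)=\mu(\neg\mu(a))=\neg\mu(a)=c$: for any $y\geq c$, monotonicity of $\mu$ (Proposition~\ref{Properties of mu}(4)) yields $\mu(y)\geq\mu(c)=c$, so $\mu(y)\in[c)$. For omission, note that $\mu(a)\in[c)$ would mean $c=\neg\mu(a)\leq\mu(a)$, forcing $\mu(a)=1$ and hence $a=1$, a contradiction; thus $\mu(a)\notin[c)$, and since $[c)$ is modal, $a\in[c)$ would imply $\mu(a)\in[c)$, so $a\notin[c)$ as well.

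Now I would enlarge $[c)$ to a modal filter $M$ maximal among modal filters omitting $a$; such an $M$ exists by Zorn's Lemma, the union of a chain of modal filters omitting $a$ being again a modal filter omitting $a$. The key point is that $M$ is then a coatom in the lattice of modal filters, so that $\mathbf{A}/\theta_M$ is simple. Indeed, if $N$ is a proper modal filter with $M\subsetneq N$, then maximality of $M$ forces $a\in N$, hence $\mu(a)\in N$ by modality; but $c=\neg\mu(a)\in[c)\subseteq M\subseteq N$, so $0=\mu(a)\wedge\neg\mu(a)\in N$, contradicting properness. Hence the only modal filters above $M$ are $M$ and $A$, so $\mathbf{A}/\theta_M$ is simple, and, being a quotient of a strict PSI-Algebra and hence itself strict, relational by Theorem~\ref{relational psi are simple}. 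As $a$ ranges over $A\setminus\{1\}$ the resulting maximal filters intersect in $\{1\}$, exhibiting $(\mathbf{A},\Diamond)$ as a subdirect product of relational PSI-Algebras; since $\mathbf{A}$ was arbitrary, $\mathcal{SPSI}$ is semisimple. The one genuinely delicate step is the choice of $c=\neg\mu(a)$: everything rests on Lemma~\ref{S implies good stuff}, which makes $c$ a fixed point of $\mu$ and thereby makes $[c)$ modal, while the complementary pair $\mu(a),\neg\mu(a)$ is exactly what produces the collapse in the coatom argument.
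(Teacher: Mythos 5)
Your proof is correct, and the identification of the simple members is handled exactly as in the paper (via Theorem~\ref{relational psi are simple}). For semisimplicity the two arguments share the same engine but package it differently. The paper takes the shorter route through subdirectly irreducible algebras: if $(\mathbf{A},\Diamond)$ is subdirectly irreducible, its smallest non-identity congruence yields a minimal non-trivial modal filter $F_\theta$; picking $a\in F_\theta$ with $a\neq 1$, the filter $[\neg\mu(a))$ is proper and modal (by Lemma~\ref{S implies good stuff}), and minimality of $F_\theta$ forces either $F_\theta\subseteq[\neg\mu(a))$ (whence $\neg\mu(a)\leq\mu(a)$ and $\mu(a)=1$, impossible) or $[\neg\mu(a))=\{1\}$ (whence $\mu(a)=0\in F_\theta$ and $F_\theta=A$), so $\mathbf{A}$ is simple and semisimplicity follows from Birkhoff's subdirect representation theorem. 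You instead build the subdirect decomposition by hand: for each $a\neq 1$ you extend the very same filter $[\neg\mu(a))$ by Zorn's Lemma to a modal filter maximal among those omitting $a$, show it is a coatom using the complementary pair $\mu(a)$, $\neg\mu(a)$, and intersect over all $a$. The crucial observations are identical in both proofs --- that Lemma~\ref{S implies good stuff} makes $\neg\mu(a)$ a fixed point of $\mu$ so that $[\neg\mu(a))$ is modal, and that $\mu(a)\wedge\neg\mu(a)=0$ collapses any modal filter containing both $a$ and $\neg\mu(a)$ --- but your version is more constructive and self-contained (it does not invoke the subdirect representation theorem as a black box), at the cost of an explicit Zorn argument and the verification that maximal omitting filters are coatoms, which the paper's minimality argument gets for free.
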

\begin{proof}
    It follows immediately from Theorem~\ref{relational psi are simple} that the simple members of $\mathcal{SPSI}$ are precisely the relational PSI-Algebras. 
    Hence, to prove our claim, it suffices to show that the variety generated by $\mathcal{RPSI}$ is exactly $\mathcal{SPSI}$. 
    To this end, it is enough to show that every subdirectly irreducible algebra in the variety is simple.


    Suppose that $(\mathbf{A}, \Diamond)$ is subdirectly irreducible. 
    Then there exists the smallest non-identity congruence~$\theta$. 
    Consequently, $F_\theta$ is a non-trivial (i.e., $F_\theta\neq\{1\}$) modal filter, which is minimal among the non-trivial modal filters of~$A$. 
    Thus, there exists $a\in F_\theta$ such that $a\neq 1$. 
    From Proposition~\ref{Properties of mu}, it follows that $\mu(a)\neq 1$, and therefore $\neg \mu(a)\neq 0$. 
    By Lemma~\ref{S implies good stuff} and the monotonicity of~$\mu$, we have that $[\neg \mu(a))$ is a proper modal filter. 
    Hence, either $F_\theta\subseteq [\neg \mu(a))$ or $[\neg \mu(a))=\{1\}$.

    In the first case, since $F_\theta$ is modal and $a\in F_\theta$, it follows that $\mu(a)\in F_\theta\subseteq [\neg \mu(a))$. 
    Then $\neg \mu(a)\leq \mu(a)$, and we obtain $\mu(a)=1$, a contradiction in light of Proposition~\ref{Properties of mu}(3), since $a\neq 1$. 
    Therefore $[\neg \mu(a))=\{1\}$, which implies $\mu(a)=0$. 
    Again, since $F_\theta$ is modal and $a\in F_\theta$, it follows that $0\in F_\theta$, and thus $F_\theta=A$. 
    Hence $(\mathbf{A}, \Diamond)$ is simple, and by Theorem~\ref{relational psi are simple}, $(\mathbf{A}, \Diamond)$ is a relational PSI-Algebra. 
    This concludes the proof.
\end{proof}

Let $\mathcal{K}$ be a class of algebras. A ternary term $t(x, y, z)$ is called a \emph{discriminator term} for the class $\mathcal{K}$ if, in every algebra $\mathbf{A} \in \mathcal{K}$, it satisfies
\[
t(a, b, c) =
\begin{cases}
a, & \text{if } a \ne b,\\[4pt]
c, & \text{if } a = b,
\end{cases}
\]
for all $ a, b, c \in A$. The variety generated by $\mathcal{K}$ is then called a \emph{discriminator variety}.

In particular, if every \(\mathbf{B}\in \mathcal{K} \) has a Boolean reduct \((B, \vee,\wedge,\neg,0,1) \), it is well known that this is equivalent to the existence of a unary term \(d(x) \)—called a \emph{unary discriminator term}—which satisfies, in every member of \(\mathcal{K} \), the following conditions:
\[
d(a)=
\begin{cases}
    0, & \text{if } a=0,\\[4pt]
    1, & \text{if } a\neq 0.
\end{cases}
\]
It is worth noticing that if we consider the symmetric difference
\[
x + y\defeq (x\wedge \neg y) \vee (\neg x\wedge y),
\]
then, in such a case, the terms \(d \) and \(t \) are in fact interdefinable
\[
d(x) = \neg t(0, x, 1)
\quad \text{and} \quad
t(x, y, z) = (x\wedge d(x + y)) \vee (z\wedge \neg d(x + y)).
\]

\begin{theorem}\label{SPSI is a discriminator}
    The variety $\mathcal{SPSI}$ is a discriminator variety. 
\end{theorem}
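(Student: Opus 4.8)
\section*{Proof proposal}

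The plan is to produce a discriminator term directly on the simple members of $\mathcal{SPSI}$ and then appeal to the structure theory already in place. By Theorem~\ref{SPSI is semisimple}, the simple members of $\mathcal{SPSI}$ are exactly the relational PSI-Algebras, and its proof shows moreover that $\mathcal{SPSI}$ is the variety generated by the class $\mathcal{RPSI}$ of relational PSI-Algebras (every subdirectly irreducible member being simple, hence relational). Thus, by the definition of discriminator variety recalled above, it suffices to exhibit a ternary term that acts as the discriminator on every member of $\mathcal{RPSI}$. Since relational PSI-Algebras carry a Boolean reduct, the interdefinability of $t$ and $d$ noted above lets me reduce this to finding a \emph{unary} discriminator term $d(x)$ with $d(0)=0$ and $d(a)=1$ for $a\neq 0$.

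The crux is the observation that $d(x)\defeq\Diamond(1,1,x)$ is such a term. On the one hand $d(0)=\Diamond(1,1,0)=0$ by \eqref{MO1}. On the other hand, fix a relational PSI-Algebra $(\mathbf{A},\Diamond)$ and $a\neq 0$; instantiating \eqref{PI3} with first argument $1$ gives $a=1\wedge a\leq\Diamond(1,1,a)$, so $\Diamond(1,1,a)>0$, and since the algebra is relational we have $\Diamond(1,1,a)\in\{0,1\}$, forcing $d(a)=1$. This is the only step carrying any real content, and it is immediate once one recalls both that relational algebras take only the values $0,1$ and that $\Diamond$ dominates the meet of its arguments.

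Passing to the ternary discriminator is then purely formal. Using the symmetric difference $x+y$ and the formula recalled above, I set
\[
t(x,y,z)\defeq\bigl(x\wedge\Diamond(1,1,x+y)\bigr)\vee\bigl(z\wedge\neg\Diamond(1,1,x+y)\bigr),
\]
and check on an arbitrary relational PSI-Algebra that if $a\neq b$ then $a+b\neq 0$, whence $\Diamond(1,1,a+b)=1$ and $t(a,b,c)=a$, while if $a=b$ then $a+b=0$, whence $\Diamond(1,1,a+b)=0$ and $t(a,b,c)=c$. Therefore $t$ is a discriminator term for $\mathcal{RPSI}$, and as $\mathcal{SPSI}$ is the variety generated by $\mathcal{RPSI}$, the variety $\mathcal{SPSI}$ is a discriminator variety.

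I do not anticipate a genuine obstacle: the whole argument rests on the earlier identification of the simple (equivalently, relational) members in Theorem~\ref{SPSI is semisimple} and on the bound $a\leq\Diamond(1,1,a)$. The single point meriting care is that the discriminator be verified on \emph{all} subdirectly irreducible members rather than merely the simple ones; this is precisely what semisimplicity supplies, since in $\mathcal{SPSI}$ every subdirectly irreducible algebra is simple and hence relational.
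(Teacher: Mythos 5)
Your proof is correct and follows essentially the same route as the paper: reduce via Theorem~\ref{SPSI is semisimple} to exhibiting a unary discriminator term on the class of relational PSI-Algebras, and verify it using \eqref{MO1}, \eqref{PI3}, and the fact that $\Diamond$ takes only the values $0,1$ there. The only (immaterial) difference is the choice of term --- you use $d(x)\defeq\Diamond(1,1,x)$ while the paper uses $d(x)\defeq\neg\mu(\neg x)=\Diamond(1,1,x)\vee\Diamond(1,x,1)$ --- and both verifications rest on the same inequality $a\leq\Diamond(1,1,a)$.
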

\begin{proof}
    By Theorem \ref{SPSI is semisimple}, to prove our claim it is enough to show that $\mathcal{RPSI}$ is a class with a discriminator term. To do so, we consider $d(x)\defeq\neg \mu (\neg x)$ about which we will show that it is indeed a unary discriminator term. Let $(\mathbf{A},\Diamond)$ be a relational PSI-Algebra. By Proposition \ref{Properties of mu}, it is immediately seen that $d(0)=0$. So let us assume that $a\neq 0$. If $d(a)\neq 1$, it must be that $d(a)=0$, so $\Diamond(1,a,1)=\Diamond(1,1,a)=0$. By \eqref{PI3}, we get $a=a\wedge 1\leq \Diamond(1,1,a)$, which is a contradiction. Therefore, $d(a)=1$ and consequently $d$ is a unary discriminator term for $\mathcal{RPSI}$, as desired. 
\end{proof}

A variety \(\mathcal{V} \) is said to have the \emph{congruence extension property} (abbreviated as CEP) if, for every subalgebra \(\mathbf{A} \) of an algebra \(\mathbf{B} \in \mathcal{V} \) and for each congruence \(\delta \) on \(\mathbf{A} \), there exists a congruence \(\theta \) on \(\mathbf{B} \) such that \(\delta = \theta \cap A^{2} \). Moreover, \(\mathcal{V} \) is said to have \emph{definable principal congruences} (DPC) if there exists a formula \(\zeta(x, y, u, v) \) in the first-order language of \(\mathcal{V} \) such that, for every algebra \(\mathbf{A} \in \mathcal{V} \) and all \(a, b, c, d \in A \),
\[
(c, d) \in \mathsf{Cg}^{\mathbf{A}}(a, b)
\quad \Longleftrightarrow \quad
\mathbf{A} \models \zeta[a, b, c, d],
\]
where $\mathsf{Cg}^{\mathbf{A}}(a, b)$ denotes the smallest congruence on $\mathbf{A}$ containing $(a,b)$. If the formula \(\zeta(x, y, u, v) \) can be expressed as a finite conjunction of equations, then \(\mathcal{V} \) is said to have \emph{equationally definable principal congruences} (EDPC). Finally, a variety \(\mathcal{V} \) is called \emph{arithmetical} if, for every algebra \(\mathbf{A} \in \mathcal{V} \), the congruence lattice of \(\mathbf{A} \) is both distributive and permutable; equivalently, \(\mathcal{V} \) is arithmetical if it is both \emph{congruence distributive} and \emph{congruence permutable}.

We conclude this section with some immediate consequences of Theorem \ref{SPSI is a discriminator}.

\begin{corollary}
    The following hold: 
    \begin{enumerate}
        \item $\mathcal{SPSI}$ has CEP.
        \item $\mathcal{SPSI}$ has EDPC.       
        \item $\mathcal{SPSI}$ is arithmetical. 
    \end{enumerate}
\end{corollary}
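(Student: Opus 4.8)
The plan is to read off all three items from Theorem~\ref{SPSI is a discriminator} together with the general theory of discriminator varieties, since each of CEP, EDPC, and arithmeticity is a classical consequence of possessing a discriminator term. Recall from that theorem that $\mathcal{SPSI}$ is generated by the class $\mathcal{RPSI}$ of relational PSI-Algebras, on which the ternary term
\[
t(x,y,z)=\bigl(x\wedge d(x+y)\bigr)\vee\bigl(z\wedge\neg d(x+y)\bigr),\qquad d(x)=\neg\mu(\neg x),
\]
acts as the genuine ternary discriminator, because in a relational algebra $d(a)\in\{0,1\}$ with $d(a)=0$ iff $a=0$, whence $d(x+y)=1$ iff $x\neq y$.

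For arithmeticity~(3) I would observe that the discriminator term $t$ is itself a Pixley term: a direct check in $\mathcal{RPSI}$ gives $t(x,y,y)=x$, $t(x,x,y)=y$, and $t(x,y,x)=x$, using only the description of $d$ above. Being equations, these three identities persist in the generated variety $\mathcal{SPSI}$, and by Pixley's theorem the existence of such a term is equivalent to the variety being simultaneously congruence distributive and congruence permutable, i.e.\ arithmetical.

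For EDPC~(2) I would exploit semisimplicity (Theorem~\ref{SPSI is semisimple}): every member of $\mathcal{SPSI}$ is a subdirect product of simple (relational) algebras, and in each such factor $\mathsf{Cg}(a,b)$ is trivial when $a=b$ and full otherwise. A short discriminator computation shows that, in every relational PSI-Algebra, the equation $t(x,y,u)=t(x,y,v)$ holds exactly when $x\neq y$ or $u=v$, which is precisely the condition $(u,v)\in\mathsf{Cg}(x,y)$. Hence the single equation $\zeta(x,y,u,v)\colon\ t(x,y,u)=t(x,y,v)$ defines principal congruences; being a single equation (in particular a finite conjunction of equations), it witnesses EDPC, and a fortiori DPC. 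The step that needs mild care is transferring this factorwise characterisation to arbitrary members of the variety, for which congruence distributivity together with semisimplicity suffice; I expect this factor-to-global transfer to be the only genuinely delicate point in the whole corollary.

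Finally, CEP~(1) is a standard property of every discriminator variety (see the general universal-algebraic theory, \citealp{BS2011}): given $\mathbf{A}\le\mathbf{B}$ in $\mathcal{SPSI}$ and a congruence $\delta$ on $\mathbf{A}$, the discriminator term lets one recover membership in the generated congruence through the same equation $\zeta$, so that the congruence generated by $\delta$ on $\mathbf{B}$ restricts back to $\delta$. Beyond invoking the discriminator machinery, the verifications reduce to the Boolean-algebra computations with $d$ and $t$ indicated above, which are immediate.
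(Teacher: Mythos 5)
Your proposal is correct and follows exactly the route the paper takes: the paper states the corollary as an immediate consequence of Theorem~\ref{SPSI is a discriminator} and the standard theory of discriminator varieties, offering no further proof. Your added details (the discriminator term as a Pixley term for arithmeticity, the equation $t(x,y,u)=t(x,y,v)$ defining principal congruences, and CEP following from the same machinery) are the standard verifications from \citep{BS2011} and are accurate.
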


\end{section}

\section{Topological dualities}\label{Section 6}

In this section, we establish the topological framework underlying the representation of 3BAMOs. 
We begin by fixing the notation and basic constructions that will be used throughout the rest of the paper. 
Our aim is to introduce the class of \emph{descriptive PSI-frames}, which provides the relational counterpart of Boolean algebras endowed with a monotone ternary operator. 
The general strategy follows the classical Stone duality between Boolean algebras and Boolean spaces, while extending it to accommodate ternary operations by means of relational structures of the form $(X, \tau, R)$, where $R$ connects points of the space with triples of closed subsets. 
This approach applies and generalizes the ideas developed in \citep{Celani2009}, adapting the duality techniques presented there to the context of ternary modal operations.

As will be shown below, this framework gives rise to three closely related dualities that share the same class of topological objects but differ in their morphisms. 
Each duality corresponds to a distinct algebraic level: the first captures the purely descriptive correspondence between $3$-BAMOs and PSI-frames, the second refines this connection through morphisms that preserve the modal structure, and the third establishes a categorical equivalence between the algebraic and relational semantics of the associated logical systems. 

We begin by fixing the notation that will be used throughout the rest of the paper. 
Let $(X, \tau)$ be a Boolean space. 
We denote by $\mathscr{C}(X)$ the collection of all closed subsets of $(X, \tau)$, and by $\mathscr{C}_0(X)$ the family $\mathscr{C}(X) - \{\emptyset\}$. 
For $\vec{Y} = (Y_1, Y_2, Y_3)$ and $\vec{Z} = (Z_1, Z_2, Z_3)$ in $\mathscr{C}(X)^3$, we write $\vec{Y} \subseteq \vec{Z}$ to indicate that they are comparable with respect to the product order on $\mathscr{C}(X)^3$, that is, $Y_i \subseteq Z_i$ for all $1 \leqslant i \leqslant 3$. 
Moreover, we write $\vec{Y} \cap \vec{Z} \neq \varnothing$ whenever there exists some $1 \leqslant i \leqslant 3$ such that $Y_i \cap Z_i \neq \varnothing$. 
Finally, for $\vec{Y} = (Y_1, Y_2, Y_3)$, we define its complement as $\vec{Y}^c = (Y_1^c, Y_2^c, Y_3^c)$. 
We also denote by $\mathscr{CO}(X)$ the family of all clopen subsets of $(X, \tau)$.

For $\vec{U} \in \mathscr{CO}(X)^3$ we define
\[
L_{\vec{U}}\defeq\left\{ \vec{Y} \in \mathscr{C}_0(X)^3 \;\middle|\; \vec{Y} \cap \vec{U} \neq \emptyset \right\}.
\]

\begin{remark}
Let $\vec{U}, \vec{V} \in \mathscr{CO}(X)^3$. For
\[
\vec{U} \cup \vec{V} \defeq (U_1 \cup V_1,\, U_2 \cup V_2,\, U_3 \cup V_3), \quad\text{and}\quad
\vec{U} \cap \vec{V}\defeq (U_1 \cap V_1,\, U_2 \cap V_2,\, U_3 \cap V_3)
\]
the following properties hold:
\begin{enumerate}
    \item $L_{\vec{\emptyset}} = \emptyset$,
    \item $L_{\vec{X}} = \mathscr{C}_0(X)^3$,
    \item $L_{\vec{U} \cup \vec{V}} = L_{\vec{U}} \cup L_{\vec{V}}$,
    \item $L_{\vec{U} \cap \vec{V}} \subseteq L_{\vec{U}} \cap L_{\vec{V}}$.\QED
\end{enumerate}
\end{remark}

For $R \subseteq X \times \mathscr{C}_0(X)^3$, by means of $L_{\vec{U}}$, we define the following sets:
\[
\Diamond_R(\vec{U})\defeq \{x \in X \;\colon\; R(x) \cap L_{\vec{U}^c}^c \neq \emptyset \}, \qquad
\Box_R(\vec{U})\defeq \{x \in X \;\colon\; R(x) \subseteq L_{\vec{U}} \}.
\]

\begin{definition}\label{descriptive psi-frame}
    A \emph{descriptive PSI-frame} (a \emph{PSI-frame}, for short) is a structure $(X, \tau, R)$ such that $(X, \tau)$ is a Boolean space and $R \subseteq X \times \mathscr{C}_0(X)^3$ satisfies the following conditions:
\begin{align}
\label{DF1}\tag{DF1}  &\text{For every $\vec{U} \in \mathscr{CO}(X)^3$, we have 
    $\Diamond_R(\vec{U}) \in \mathscr{CO}(X)$}\,.\\
\label{DF2}\tag{DF2}  &\text{For each $x \in X$, $R(x) = \bigcap \left\{ L_{\vec{U}} \;\colon\; x \in \Box_R(\vec{U})\ \text{and}\ 
        \vec{U} \in \mathscr{CO}(X)^3 \right\}
    $}\,.\\
\label{DF3}\tag{DF3} & \text{For every $\vec{Y}\in R(x)$, there exist $y_1\in Y_1$ and $y_2\in Y_2$ such that $(\{y_1\},\{y_2\},Y_3)\in R(x)$}\,.
\end{align}
\end{definition}

\begin{proposition}\label{psi-frames are m-frames}
    Let $(X,\tau,R)$ be a PSI-frame. Then, for all $x\in X$, $R(x)$ is monotone with respect to the pointwise order on $\mathscr{C}_0(X)^3$. Moreover, 
    \[
    \Diamond_R(\vec{U})=R^{-1}(\vec{U})\,,
    \]
    for every $\vec{U}\in \mathscr{CO}(X)^3$.
\end{proposition}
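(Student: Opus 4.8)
The plan is to derive both assertions from \eqref{DF2} together with a single structural observation: each set $L_{\vec{U}}$ is upward closed in $\mathscr{C}_0(X)^3$ with respect to the product order. First I would verify this observation. If $\vec{Y} \in L_{\vec{U}}$, so that $Y_i \cap U_i \neq \emptyset$ for some $i$, and $\vec{Y} \subseteq \vec{Z}$, then $Z_i \cap U_i \supseteq Y_i \cap U_i \neq \emptyset$, whence $\vec{Z} \in L_{\vec{U}}$. Monotonicity of $R(x)$ is then immediate: by \eqref{DF2}, $R(x)$ is an intersection of sets of the form $L_{\vec{U}}$, and an intersection of up-sets is again an up-set. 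Thus if $\vec{Y} \in R(x)$ and $\vec{Y} \subseteq \vec{Z}$, then $\vec{Z}$ lies in every $L_{\vec{U}}$ with $x \in \Box_R(\vec{U})$, and so $\vec{Z} \in R(x)$ by \eqref{DF2}.

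For the identity $\Diamond_R(\vec{U}) = R^{-1}(\vec{U})$, the next step is to unfold the definition of $\Diamond_R$. I would compute $L_{\vec{U}^c}^c$: since $\vec{Y} \in L_{\vec{U}^c}$ means $Y_i \cap U_i^c \neq \emptyset$, i.e.\ $Y_i \not\subseteq U_i$, for some $i$, its complement inside $\mathscr{C}_0(X)^3$ is exactly the set of $\vec{Y} \in \mathscr{C}_0(X)^3$ with $\vec{Y} \subseteq \vec{U}$. Substituting into the definition of $\Diamond_R$ gives
\[
\Diamond_R(\vec{U}) = \{\, x \in X : \text{there is } \vec{Y} \in R(x) \text{ with } \vec{Y} \subseteq \vec{U} \,\}.
\]

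It then remains to identify this set with $R^{-1}(\vec{U}) = \{\, x \in X : \vec{U} \in R(x) \,\}$. When $\vec{U} \in \mathscr{C}_0(X)^3$, the inclusion $R^{-1}(\vec{U}) \subseteq \Diamond_R(\vec{U})$ is trivial upon taking $\vec{Y} = \vec{U}$, while the reverse inclusion is precisely where monotonicity enters, since $\vec{Y} \in R(x)$ together with $\vec{Y} \subseteq \vec{U}$ forces $\vec{U} \in R(x)$. The one point needing care is the degenerate case in which some component $U_i$ is empty, so that $\vec{U} \notin \mathscr{C}_0(X)^3$: here $R^{-1}(\vec{U}) = \emptyset$ because $R(x) \subseteq \mathscr{C}_0(X)^3$, and likewise $\Diamond_R(\vec{U}) = \emptyset$ because $\vec{Y} \subseteq \vec{U}$ would force the nonempty set $Y_i$ into the empty set. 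I expect the bookkeeping with the complemented family $L_{\vec{U}^c}^c$ and this empty-component case to be the only genuinely delicate part; notably, \eqref{DF3} is not needed, as both claims rest solely on \eqref{DF2} and the up-set structure of the $L_{\vec{U}}$.
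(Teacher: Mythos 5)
Your proof is correct and follows essentially the same route as the paper's: the paper also derives monotonicity of $R(x)$ from \eqref{DF2} via the fact that each $L_{\vec{V}}$ is an up-set (phrased contrapositively there), and then identifies $\Diamond_R(\vec{U})$ with $R^{-1}(\vec{U})$ by unfolding $L_{\vec{U}^c}^c$ exactly as you do. Your explicit treatment of the degenerate case where some $U_i=\emptyset$ is a small point of extra care that the paper leaves implicit.
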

\begin{proof}
Suppose that $\vec{Y} \subseteq \vec{Z}$ in $\mathscr{C}_0(X)^3$, and assume that $\vec{Y} \in R(x)$. We proceed by contradiction assuming that  $\vec{Z} \notin R(x)$. Then, by \eqref{DF2}, there exists $\vec{V} \in \mathscr{CO}(X)^3$ such that $R(x) \subseteq L_{\vec{V}}$ and $\vec{Z} \notin L_{\vec{V}}$. By the definition of $L_{\vec{V}}$, this implies $\vec{Z} \cap \vec{V} = \emptyset$. Since $\vec{Y} \subseteq \vec{Z}$, we also have $\vec{Y} \cap \vec{V} = \emptyset$, which means $\vec{Y} \notin L_{\vec{V}}$. This contradicts the assumption $R(x) \subseteq L_{\vec{V}}$, because $\vec{Y} \in R(x)$. Therefore, $\vec{Z} \in R(x)$, as required.

For the second part, observe that the monotonicity of $R(x)$ with respect to componentwise inclusion of triples of closed sets implies that
\[
\Diamond_R(\vec{U}) = \{ x \in X \mid \text{there exists } \vec{Y} \in R(x) \text{ with } \vec{Y} \subseteq \vec{U} \} = R^{-1}(\vec{U}).\qedhere
\]
\end{proof}

\begin{corollary}\label{psi-frame to 3BAMO}
    If $(X,\tau, R)$ is a PSI-frame, then $(\mathscr{CO}(X),\Diamond_{R})$ is a 3BAMO.
\end{corollary}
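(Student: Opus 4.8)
The plan is to verify that $\DR$ is a well-defined ternary operation on the Boolean algebra $\mathscr{CO}(X)$ and then to check the four axioms \eqref{MO1}--\eqref{MO4} in turn. First, recall that the clopen subsets of a Boolean space form a Boolean algebra $(\mathscr{CO}(X),\cup,\cap,{}^c,\emptyset,X)$, and that by \eqref{DF1} the set $\DR(\vec U)$ is clopen for every $\vec U\in\mathscr{CO}(X)^3$, so $\DR\colon\mathscr{CO}(X)^3\to\mathscr{CO}(X)$ is genuinely an operation. Throughout I would use the description of $\DR$ supplied by Proposition~\ref{psi-frames are m-frames}, namely $\DR(\vec U)=\{x\in X:\text{there is }\vec Y\in R(x)\text{ with }\vec Y\subseteq\vec U\}$, together with the monotonicity of each $R(x)$ established there.

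For \eqref{MO1}, observe that every $\vec Y\in R(x)$ lies in $\mathscr{C}_0(X)^3$, so each of its coordinates is nonempty. Hence no $\vec Y\in R(x)$ can satisfy $Y_i\subseteq\emptyset$, which forces $\DR(\emptyset,V_2,V_3)=\DR(V_1,\emptyset,V_3)=\DR(V_1,V_2,\emptyset)=\emptyset$. Axiom \eqref{MO4} is immediate from monotonicity: if $x\in\DR(V_1,V_2,U_3)$ then some $\vec Y\in R(x)$ lies below $(V_1,V_2,U_3)\subseteq(V_1,V_2,U_3\cup W_3)$, so $x\in\DR(V_1,V_2,U_3\cup W_3)$, and symmetrically for $W_3$; this yields the required inclusion (note that only $\leq$, not equality, is demanded in the third coordinate).

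The substantive content lies in the join-preservation axioms \eqref{MO2} and \eqref{MO3}. I would treat \eqref{MO2}, the argument for \eqref{MO3} being identical after interchanging the first two coordinates. The inclusion $\DR(U_1,V_2,V_3)\cup\DR(W_1,V_2,V_3)\subseteq\DR(U_1\cup W_1,V_2,V_3)$ follows from monotonicity exactly as for \eqref{MO4}. For the reverse inclusion, suppose $x\in\DR(U_1\cup W_1,V_2,V_3)$, so that there is $\vec Y\in R(x)$ with $Y_1\subseteq U_1\cup W_1$, $Y_2\subseteq V_2$, and $Y_3\subseteq V_3$. This is where \eqref{DF3} is essential: it produces points $y_1\in Y_1$ and $y_2\in Y_2$ with $(\{y_1\},\{y_2\},Y_3)\in R(x)$. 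Since $X$ is Hausdorff, $\{y_1\},\{y_2\}\in\mathscr{C}_0(X)$, and since $y_1\in Y_1\subseteq U_1\cup W_1$ we have $y_1\in U_1$ or $y_1\in W_1$. In the first case $(\{y_1\},\{y_2\},Y_3)\subseteq(U_1,V_2,V_3)$ witnesses $x\in\DR(U_1,V_2,V_3)$, and in the second $x\in\DR(W_1,V_2,V_3)$; either way $x$ lies in the union, completing \eqref{MO2}.

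The main obstacle is precisely this reverse inclusion in \eqref{MO2} and \eqref{MO3}: additivity of a $\Diamond$-style operator defined by an existential quantifier over closed witnesses is not automatic, because a single closed set $Y_1\subseteq U_1\cup W_1$ need not be contained in either piece. Axiom \eqref{DF3}, which allows one to shrink the witness to a closed \emph{singleton} in each of the first two coordinates, is exactly what resolves this, since a point of $U_1\cup W_1$ must fall into one of the two clopens. All remaining verifications are the routine set-theoretic manipulations indicated above.
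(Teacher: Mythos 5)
Your proposal is correct and follows essentially the same route as the paper: both reduce the problem to the reverse inclusion in the join-preservation axioms \eqref{MO2}--\eqref{MO3} and resolve it by invoking \eqref{DF3} to shrink the witness to singletons in the first two coordinates, after which the case split over the union of clopens goes through by monotonicity (Proposition~\ref{psi-frames are m-frames}). The only cosmetic difference is that you apply \eqref{DF3} to the witness $\vec{Y}\subseteq\vec{U}$ rather than directly to the clopen triple, and you spell out \eqref{MO1} and \eqref{MO4}, which the paper treats as immediate.
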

\begin{proof}
    It is readily seen that $(\mathscr{CO}(X),\Diamond_{R})$ satisfies \eqref{MO1}. To verify that \eqref{MO2} and \eqref{MO3} hold, observe that Proposition \ref{psi-frames are m-frames} reduces the task to showing that, for all clopen sets $U_1, U_2, U_3, V $ of $(X,\tau)$, the following hold:
\begin{enumerate}
    \item \(R^{-1}(U_1\cup V,\, U_2,\, U_3)\subseteq R^{-1}(U_1,\, U_2,\, U_3)\,\cup\, R^{-1}(V,\, U_2,\, U_3)\), 
    \item \(R^{-1}(U_1,\, U_2\cup V,\, U_3)\subseteq R^{-1}(U_1,\, U_2,\, U_3)\,\cup\, R^{-1}(U_1,\, V,\, U_3)\).
\end{enumerate}

We prove (1); the argument for (2) is analogous.  
Let \(x\in R^{-1}(U_1\cup V,\, U_2,\, U_3)\). By \eqref{DF3}, there exist \(y\in U_1\cup V\) and \(y_2\in U_2\) such that \((\{y\},\{y_2\},U_3)\in R(x)\). Then either:
\begin{itemize}
    \item[(a)] \(y\in U_1\), in which case \(\{y\}\subseteq U_1\) and \(\{y_2\}\subseteq U_2\). Hence, by Proposition \ref{psi-frames are m-frames}, we obtain \((U_1,U_2,U_3)\in R(x)\), so \(x\in R^{-1}(U_1,U_2,U_3)\subseteq R^{-1}(U_1,U_2,U_3)\cup R^{-1}(V,U_2,U_3)\);
    \item[(b)] \(y\in V\), and the same reasoning yields \(x\in R^{-1}(V,U_2,U_3)\subseteq R^{-1}(U_1,U_2,U_3)\cup R^{-1}(V,U_2,U_3)\).
\end{itemize}

Thus (1) follows, and the proof is complete.
\end{proof}

Let $\mathbf{A}$ be a Boolean algebra and let $\beta\colon A\to \mathcal{P}(\mathrm{Ul}(\mathbf{A}))$ be the Stone mapping
\[\beta(a)=\{U\in \mathrm{Ul}(\mathbf{A}): a\in U\}.\] It is well known that the lattice of filters of $\mathbf{A}$ and the lattice of closed subsets of its dual Stone space $\mathrm{Ul}(\mathbf{A})$ are dually isomorphic. 
Such an isomorphism is established by the following assignments:
\begin{displaymath}
\begin{array}{cc}
  \xymatrix{
    \mathrm{Fi}(\mathbf{A}) \ar[r]^-{\varphi}& \mathscr{C}(\mathsf{Ul}(\mathbf{A})) \\
    F \ar@{|->}[r] & \{U\in\mathsf{Ul}(\mathbf{A})\colon F\subseteq U\}
    }  
    & 
    \xymatrix{
     \mathscr{C}(\mathsf{Ul}(\mathbf{A})) \ar[r]^-{\psi} & \mathrm{Fi}(\mathbf{A}) \\
    Y \ar@{|->}[r] &  \{a\in \mathbf{A}\colon Y\subseteq \beta(a)\}.
    } \\
\end{array}
\end{displaymath}
In what follows, we write $Y_F$ and $F_Y$ to denote $\varphi(F)$ and $\psi(Y)$, respectively.

\smallskip

This dual correspondence allows us to define, for any 3BAMO $\frak{A}=(\mathbf{A},\Diamond)$, 
a ternary relation $R_{\Diamond}\subseteq \mathrm{Ul}(\mathbf{A})\times \closedne(\mathrm{Ul}(\mathbf{A}))^3$ as follows:
\begin{equation}\tag{$\mathrm{df}\,R_{\Diamond}$}\label{def: R diamond}
        (U,\vec{Y})\in R_{\Diamond} 
        \;\Longleftrightarrow\; 
        F_{Y_1}\times F_{Y_2}\times F_{Y_3}\subseteq \Diamond^{-1}[U].
\end{equation}

The next result shows that the relation $R_{\Diamond}$ defined above faithfully represents the algebraic behavior of the operator $\Diamond$ in the Stone space of $\mathbf{A}$.

\begin{proposition}\label{stone-relation lemma}
Let $(\mathbf{A}, \Diamond)$ be a $3$-BAMO and let $a,b,c\in A$. 
Define $\Box(x,y,z)\defeq\neg \Diamond (\neg x,\neg y,\neg z)$. Then, the following hold:
\begin{enumerate}
    \item $\Diamond_{R_{\Diamond}}(\beta(a),\beta(b),\beta(c))=\beta(\Diamond(a,b,c))$;
    \item $\Box_{R_{\Diamond}}(\beta(a),\beta(b),\beta(c))=\beta(\Box(a,b,c))$;
    \item $\Box_{R_{\Diamond}}(\beta(a),\beta(b),\beta(c))
    =\Diamond_{R_{\Diamond}}(\beta(\neg a),\beta(\neg b),\beta(\neg c))^{c}$.
\end{enumerate}
\end{proposition}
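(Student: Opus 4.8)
The plan is to prove the three identities in the order (1), (2), (3), since (3) will follow formally from (1) and (2) together with the definition of $\Box$ and standard Stone-duality facts about complements. The core work lies in item (1), and item (2) will then be obtained by a dual argument (or by applying (1) to complemented arguments). Throughout, I will unfold the definitions of $\Diamond_{R_\Diamond}$ and $R_\Diamond$ in terms of the filter/closed-set correspondence $\varphi,\psi$ and the Stone map $\beta$, and I will repeatedly use the fact that for a clopen set $\beta(a)$ the associated filter is the principal filter $F_{\beta(a)}=[a)$, since $\beta$ is the Stone isomorphism and principal filters correspond to clopen-generated closed sets.

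For item (1), I would first apply Proposition~\ref{psi-frames are m-frames}, which gives $\Diamond_{R_\Diamond}(\vec U)=R_\Diamond^{-1}(\vec U)$ once we know $(\mathrm{Ul}(\mathbf A),\tau,R_\Diamond)$ is a PSI-frame; so I would either invoke that the relation $R_\Diamond$ is the relevant PSI-frame relation or argue the monotonicity directly. Unfolding, $U\in\Diamond_{R_\Diamond}(\beta(a),\beta(b),\beta(c))$ means there is a triple $\vec Y\in R_\Diamond(U)$ with $\vec Y\subseteq(\beta(a),\beta(b),\beta(c))$, which by \eqref{def: R diamond} translates into the existence of ultrafilters witnessing $F_{Y_1}\times F_{Y_2}\times F_{Y_3}\subseteq\Diamond^{-1}[U]$ with each $F_{Y_i}\supseteq[a),[b),[c)$ respectively. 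The goal is to show this holds exactly when $\Diamond(a,b,c)\in U$, i.e.\ $U\in\beta(\Diamond(a,b,c))$. The forward direction is the delicate one: from a triple of filters containing $a,b,c$ whose product lands in $\Diamond^{-1}[U]$, I must extract that $\Diamond(a,b,c)\in U$. Here I would take the principal filters $[a),[b),[c)$ themselves (equivalently the closed sets $\beta(a),\beta(b),\beta(c)$), check that the triple $(a,b,c)$ lies in their product, and conclude $\Diamond(a,b,c)\in U$ directly; the converse direction uses monotonicity of $\Diamond$ to show the principal triple already witnesses membership.

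\textbf{The main obstacle} I anticipate is the forward direction of (1), specifically handling the passage between arbitrary filters $F_{Y_i}$ witnessing the relation and the canonical principal filters generated by $a,b,c$. The definition of $R_\Diamond$ quantifies over triples of closed sets (equivalently filters), not just the principal ones, so I must verify that enlarging or shrinking to principal filters preserves the containment $F_{Y_1}\times F_{Y_2}\times F_{Y_3}\subseteq\Diamond^{-1}[U]$ in the direction needed. This is exactly where the monotonicity axioms \eqref{MO2}--\eqref{MO4} and the fact that $\Diamond^{-1}[U]$ is an up-set (when $U$ is an ultrafilter) will be used: since $\Diamond$ is monotone in each coordinate and $U$ is a filter, $\Diamond^{-1}[U]$ is upward closed componentwise, which lets me replace the witnessing filters by the principal filters $[a),[b),[c)$ and evaluate the product at the generator $(a,b,c)$.

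Finally, item (2) follows by the dual computation for $\Box_{R_\Diamond}$, unfolding $\Box_{R_\Diamond}(\vec U)=\{x:R_\Diamond(x)\subseteq L_{\vec U}\}$ and matching it against $\beta(\Box(a,b,c))=\beta(\neg\Diamond(\neg a,\neg b,\neg c))$; and item (3) is then immediate by combining (1) and (2) with the De Morgan relationship $\Box(x,y,z)=\neg\Diamond(\neg x,\neg y,\neg z)$ and the fact that $\beta$ sends negation to set-theoretic complement, so that $\beta(\Box(a,b,c))=\beta(\Diamond(\neg a,\neg b,\neg c))^c=\Diamond_{R_\Diamond}(\beta(\neg a),\beta(\neg b),\beta(\neg c))^c$, where the last equality is (1) applied to $\neg a,\neg b,\neg c$.
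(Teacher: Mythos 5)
Your proposal is correct and follows the same overall decomposition as the paper: establish item (1) from \eqref{def: R diamond}, obtain (2) by a dual argument, and read off (3) from the definition of $\Box$. The differences are in emphasis. For (1) the paper simply defers to the corresponding result of Celani, whereas you carry out the computation; your identification of the crux --- that $Y_i\subseteq\beta(a)$ forces $a\in F_{Y_i}$, so an arbitrary witnessing triple already yields $\Diamond(a,b,c)\in U$, while conversely the principal triple $(\beta(a),\beta(b),\beta(c))$ witnesses membership by monotonicity of $\Diamond$ and upward closure of $U$ --- is exactly right. Two small points should be made explicit: the monotonicity of $R_{\Diamond}(U)$ needed to invoke Proposition~\ref{psi-frames are m-frames} must be argued directly from \eqref{def: R diamond} (the filter correspondence is antitone), since Proposition~\ref{3BAMO to psiframe} comes only afterwards; and when one of $a,b,c$ equals $0$ the principal triple leaves $\mathscr{C}_0$, but then both sides are empty by \eqref{MO1}. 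For (2) the roles are reversed: the paper gives a full double-inclusion argument over arbitrary $\vec{Y}\in R_{\Diamond}(U)$, while you only sketch it. Your alternative --- applying (1) to complemented arguments --- does work, but it silently relies on the purely definitional identity $\Box_{R}(\vec{U})=\Diamond_{R}(\vec{U}^{c})^{c}$ (obtained by unfolding $L_{\vec{U}}$ and $L_{\vec{U}^{c}}^{c}$), which is item (3) restricted to clopen triples; if you take that route you should prove this identity first and then derive (2) and (3) from it together with (1), rather than deriving (3) from (1) and (2) as you state at the end.
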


\begin{proof}
(1) This follows directly from \eqref{def: R diamond}. 
The proof is analogous the proof of \citep[Theorem 5]{Celani2009}.

\smallskip

(2) Let $U\in \Box_{R_{\Diamond}}(\beta(a),\beta(b),\beta(c))$ and assume that 
$U\notin \beta(\Box(a,b,c))$. Then 
\[
\Diamond(\neg a,\neg b,\neg c)\in U.
\]
Since $\Diamond$ is increasing in all coordinates, we have
\[
[\neg a)\times [\neg b) \times [\neg c) 
=F_{\beta(\neg a)}\times F_{\beta(\neg b)}\times F_{\beta(\neg c)}
\subseteq \Diamond^{-1}[U],
\]
so $(\beta(\neg a), \beta(\neg b), \beta(\neg c))\in R_{\Diamond}(U)$. 
By assumption, this yields 
\[
(\beta(\neg a), \beta(\neg b), \beta(\neg c))\cap (\beta(a), \beta(b), \beta(c))\neq \emptyset,
\]
which is absurd. Hence 
$\Box_{R_{\Diamond}}(\beta(a),\beta(b),\beta(c))
\subseteq \beta(\Box(a,b,c))$.

Conversely, let $U\in \beta(\Box(a,b,c))$ and suppose that $\vec{Y}\in R_{\Diamond}(U)$. 
We show that $\vec{Y}\in L_{(\beta(a),\beta(b),\beta(c))}$. 
Indeed, our assumptions entail:  
(a) $\Diamond(\neg a, \neg b, \neg c)\notin U$, and  
(b) $F_{Y_1}\times F_{Y_2}\times F_{Y_3}\subseteq \Diamond^{-1}[U]$.  
From (a) and (b) it follows that 
$(\neg a, \neg b, \neg c)\notin F_{Y_1}\times F_{Y_2}\times F_{Y_3}$, 
and hence $Y_1\nsubseteq \beta(a)$, $Y_2\nsubseteq \beta(b)$ and $Y_3\nsubseteq \beta(c)$. 
Therefore, $\vec{Y}\cap (\beta(a),\beta(b),\beta(c))\neq \emptyset$, 
so $\vec{Y}\in L_{(\beta(a),\beta(b),\beta(c))}$. 
We conclude that $\beta(\Box(a,b,c))\subseteq \Box_{R_{\Diamond}}(\beta(a),\beta(b),\beta(c))$, 
as required.

\smallskip

(3) The result follows from the following calculations:
\begin{align*}
\Box_{R_{\Diamond}}(\beta(a),\beta(b),\beta(c))  
&=  \beta(\Box (a,b,c)) && \text{by (2)}\\
&=  \beta(\Diamond(\neg a,\neg b,\neg c))^{c} && \text{by the definition of $\Box$}\\
&=  \Diamond_{R_{\Diamond}}(\beta(\neg a),\beta(\neg b),\beta(\neg c))^{c} && \text{by (1).}\qedhere
\end{align*}
\end{proof}

The following result shows that the structure obtained by equipping 
the Stone space of $\mathbf{A}$ with the relation $R_{\Diamond}$ satisfies the conditions required for a PSI-frame.

\begin{proposition}\label{3BAMO to psiframe}
Let $\frak{A}=(\mathbf{A},\Diamond)$ be a $3$-BAMO and let $\tau_{\mathbf{A}}$ be the Stone topology for $\Ult(\mathbf{A})$. Then $\Uf(\frak{A})=(\Ult(\mathbf{A}), \tau_{\mathbf{A}},  R_{\Diamond})$ is a PSI-frame.
\end{proposition}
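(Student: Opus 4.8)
The plan is to verify the three defining conditions \eqref{DF1}, \eqref{DF2}, \eqref{DF3} in turn. The first two will fall out almost immediately from Proposition~\ref{stone-relation lemma}, whereas the real work—and the main obstacle—is \eqref{DF3}, which is exactly the frame-level reflection of the additivity of $\Diamond$ in its first two arguments (axioms \eqref{MO2}, \eqref{MO3}) pushed down to individual points of the Stone space. Throughout I use that every clopen of $\Ult(\mathbf{A})$ is of the form $\beta(a)$, and that $R_{\Diamond}$ already takes values in $\closedne(\Ult(\mathbf{A}))^3$ by definition \eqref{def: R diamond}.

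For \eqref{DF1}, an arbitrary $\vec U\in\mathscr{CO}(\Ult(\mathbf{A}))^3$ is $(\beta(a),\beta(b),\beta(c))$ for suitable $a,b,c$, and Proposition~\ref{stone-relation lemma}(1) gives $\Diamond_{R_{\Diamond}}(\vec U)=\beta(\Diamond(a,b,c))$, a clopen set. For \eqref{DF2}, fix $U\in\Ult(\mathbf{A})$. The inclusion of $R_{\Diamond}(U)$ into the intersection is immediate, since $U\in\Box_{R_{\Diamond}}(\vec V)$ means exactly $R_{\Diamond}(U)\subseteq L_{\vec V}$. For the reverse inclusion I argue contrapositively: if $\vec Y\notin R_{\Diamond}(U)$, then by \eqref{def: R diamond} there are $a\in F_{Y_1}$, $b\in F_{Y_2}$, $c\in F_{Y_3}$ with $\Diamond(a,b,c)\notin U$. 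Taking $\vec V=(\beta(\neg a),\beta(\neg b),\beta(\neg c))$, the memberships $a\in F_{Y_1}$ etc.\ give $Y_1\subseteq\beta(a)$, hence $Y_1\cap\beta(\neg a)=\emptyset$ coordinatewise, so $\vec Y\notin L_{\vec V}$; on the other hand Proposition~\ref{stone-relation lemma}(2) together with $\Box(\neg a,\neg b,\neg c)=\neg\Diamond(a,b,c)$ yields $\Box_{R_{\Diamond}}(\vec V)=\beta(\neg\Diamond(a,b,c))$, and $\Diamond(a,b,c)\notin U$ puts $U$ into this set. Thus $\vec Y$ is excluded from the intersection.

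The main obstacle is \eqref{DF3}. Given $\vec Y\in R_{\Diamond}(U)$, that is $F_{Y_1}\times F_{Y_2}\times F_{Y_3}\subseteq\Diamond^{-1}[U]$, I must find ultrafilters $y_1\in Y_1$, $y_2\in Y_2$ with $y_1\times y_2\times F_{Y_3}\subseteq\Diamond^{-1}[U]$, since $F_{\{y_i\}}=\psi(\{y_i\})=y_i$ and $y_i\in Y_i$ is precisely $F_{Y_i}\subseteq y_i$; this gives $(\{y_1\},\{y_2\},Y_3)\in R_{\Diamond}(U)$. I produce $y_1$ and $y_2$ one coordinate at a time. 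Set $G_1\defeq\{a\in A: \Diamond(a,b,c)\in U\text{ for all }b\in F_{Y_2},\,c\in F_{Y_3}\}$; monotonicity of $\Diamond$ and upward closure of $U$ make $G_1$ an up-set, it contains $F_{Y_1}$ by hypothesis, and it omits $0$ because $\Diamond(0,b,c)=0\notin U$ by \eqref{MO1}. A Zorn's lemma argument (unions of chains of filters inside $G_1$ stay filters inside $G_1$) yields a filter $y_1$ maximal among filters between $F_{Y_1}$ and $G_1$.

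The crux, and the only nontrivial step, is that this maximal $y_1$ is an ultrafilter. If $a,\neg a\notin y_1$, then maximality forces $g,g'\in y_1$, $b,b'\in F_{Y_2}$, $c,c'\in F_{Y_3}$ with $\Diamond(g\wedge a,b,c)\notin U$ and $\Diamond(g'\wedge\neg a,b',c')\notin U$. Writing $h=g\wedge g'\in y_1$, $\tilde b=b\wedge b'\in F_{Y_2}$, $\tilde c=c\wedge c'\in F_{Y_3}$, monotonicity sends both values below the respective non-members of $U$, and \eqref{MO2} gives
\[
\Diamond(h,\tilde b,\tilde c)=\Diamond(h\wedge a,\tilde b,\tilde c)\vee\Diamond(h\wedge\neg a,\tilde b,\tilde c)\notin U,
\]
where primeness of $U$ is what turns a join of two non-members into a non-member; this contradicts $h\in y_1\subseteq G_1$. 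Hence $y_1$ is an ultrafilter with $F_{Y_1}\subseteq y_1\subseteq G_1$, so $y_1\in Y_1$ and $y_1\times F_{Y_2}\times F_{Y_3}\subseteq\Diamond^{-1}[U]$. Running the identical construction in the second coordinate, with $G_2\defeq\{b:\Diamond(a,b,c)\in U\text{ for all }a\in y_1,\,c\in F_{Y_3}\}$ and \eqref{MO3} in place of \eqref{MO2}, produces an ultrafilter $y_2\supseteq F_{Y_2}$ with $y_1\times y_2\times F_{Y_3}\subseteq\Diamond^{-1}[U]$, completing \eqref{DF3}. I expect the squeeze ``maximal filter inside $G_i$ is an ultrafilter'' to be the delicate point, as it is precisely here that coordinatewise additivity of $\Diamond$ and primeness of $U$ are indispensable.
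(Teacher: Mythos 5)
Your proof is correct and follows essentially the same route as the paper's: \eqref{DF1} and \eqref{DF2} via Proposition~\ref{stone-relation lemma}, and \eqref{DF3} via a Zorn's lemma maximal-filter argument whose key step uses additivity of $\Diamond$ in the relevant coordinate together with primeness of $U$ to show the maximal filter is an ultrafilter. The one small difference is that you build $y_2$ relative to the already-constructed ultrafilter $y_1$ (so $(\{y_1\},\{y_2\},Y_3)\in R_{\Diamond}(U)$ is immediate), whereas the paper constructs $V$ and $W$ in parallel and then asserts the combination is easy; your sequential version actually tightens that last step.
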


\begin{proof}
By Proposition~\ref{stone-relation lemma}(1), $\Diamond_{R_{\Diamond}}$ is closed over clopen subsets of $\Ult(\mathbf{A})$. 
Let $U\in \Ult(\mathbf{A})$. In the following, we denote $(\beta(a_1),\beta( a_2),\beta( a_3))$ by $\beta(\vec{a})$ and $(\beta(\neg a_1),\beta( \neg a_2), \beta(\neg a_3))$ by $\beta(\neg \vec{a})$. We only need to show that
\[
D=\bigcap \left\{ L_{\beta(\vec{a})} \mid U \in \Box_{R_{\Diamond}}(\beta(\vec{a})),\;   \vec{a}\in A^3\right\}\subseteq R_{\Diamond}(U)\,,
\]
since the reverse inclusion always holds.

Suppose, for the sake of contradiction, that there exists $\vec{Y}\in D$ such that $\vec{Y}\notin R_{\Diamond}(U)$. 
Then there exist elements $a_i\in F_{Y_i}$, for $i=1,2,3$, such that $\Diamond(\vec{a})\notin U$. 
Consequently, $\vec{Y}\cap \beta(\neg \vec{a})=\emptyset$, and since $U$ is an ultrafilter, we have $\Box(\neg \vec{a})\in U$. 
By Proposition~\ref{stone-relation lemma}(2), it follows that $\vec{Y}\notin L_{\beta(\neg \vec{a})}$, while $U\in \Box_{R_{\Diamond}}(\beta(\neg \vec{a}))$. 
Therefore $U\notin D$, which is a contradiction. 
Hence $D\subseteq R_{\Diamond}(U)$, as required.

Now, suppose that $\vec{Y}\in R_\Diamond(U)$. Then $F_{Y_1}\times F_{Y_2}\times F_{Y_3}\subseteq \Diamond^{-1}[U]$. We will prove that there exist $V\in Y_1$ and $W\in Y_2$ such that $(\{V\},\{W\},Y_3)\in R_\Diamond(U)$.

Let $\mathrm{Fi}(\mathbf{A})$ be the family of filters of $\mathbf{A}$. Consider the family
\[
\mathcal{F}_1=\{F\in \mathrm{Fi}(\mathbf{A}) : 
F_{Y_1}\subseteq F \text{ and } F\times F_{Y_2}\times F_{Y_3}\subseteq \Diamond^{-1}[U]\}.
\]
Clearly, $\mathcal{F}_1 \neq \varnothing$ because $F_{Y_1} \in \mathcal{F}_1$. By Zorn's lemma, there exists a maximal element $V$ in $\mathcal{F}_1$. Following \cite{Celani2009}, we will prove that $V\in \Ult(\mathbf{A})$. Let $a\in A$ and suppose that $a\notin V$. We will show that $\neg a\in V$. Since $V\subsetneq \mathsf{Fg}^{\mathbf{A}}(V\cup\{a\})$, where $\mathsf{Fg}^{\mathbf{A}}(V\cup\{a\})$ denotes the filter generated by $V\cup\{a\}$, and $V$ is maximal in $\mathcal{F}_1$, it follows that there exist $b\in \mathsf{Fg}^{\mathbf{A}}(V\cup\{a\})$, $ x_1\in F_{Y_2}$, $y_1\in F_{Y_3}$ such that $\Diamond(b,x_1,y_1)\notin U$. We will show that $\mathsf{Fg}^{\mathbf{A}}(V\cup\{\neg a\})\in \mathcal{F}_1$. Let $c\in \mathsf{Fg}^{\mathbf{A}}(V\cup\{\neg a\})$, $x_2\in F_{Y_2}$ and $y_2\in F_{Y_3}$. Then, there exists $f_1\in V$ such that $f_1\wedge \neg a \leq c$. From $b\in \mathsf{Fg}^{\mathbf{A}}(V\cup\{a\})$, we obtain that there exists $f_2\in V$ such that $f_2\wedge a \leq b$. Then, $\Diamond(f_1\wedge f_2,\, x_1\wedge x_2,\, y_1\wedge y_2)\in U$, and
\begin{align*}
    \Diamond(f_1\wedge f_2, x_1\wedge x_2, y_1\wedge y_2)
    &= \Diamond(f_1\wedge f_2\wedge(a\vee \neg a), x_1\wedge x_2, y_1\wedge y_2)\\
    &= \Diamond((f_1\wedge f_2\wedge a)\vee (f_1\wedge f_2\wedge \neg a), x_1\wedge x_2, y_1\wedge y_2)\\
    &= \Diamond(f_1\wedge f_2\wedge a, x_1\wedge x_2, y_1\wedge y_2) \\
    &\qquad\vee\; \Diamond(f_1\wedge f_2\wedge \neg a, x_1\wedge x_2, y_1\wedge y_2).
\end{align*}

Thus, either $\Diamond(f_1\wedge f_2\wedge a, x_1\wedge x_2, y_1\wedge y_2)\in U$ or $\Diamond(f_1\wedge f_2\wedge \neg a, x_1\wedge x_2, y_1\wedge y_2)\in U$. In the first case, by monotonicity, it follows that $\Diamond(b,x_1,y_1)\in U$, which is a contradiction. Therefore, $\Diamond(f_1\wedge f_2\wedge \neg a, x_1\wedge x_2, y_1\wedge y_2)\in U$, and by monotonicity, $\Diamond(c,x_2,y_2)\in U$.

Hence,
\[
\mathsf{Fg}^{\mathbf{A}}(V\cup\{\neg a\})\times F_{Y_2}\times F_{Y_3}
\subseteq \Diamond^{-1}[U],
\]
and by maximality of $V$, we conclude that $\neg a\in V$.

Since $V\in \Ult(\mathbf{A})$ and $F_{Y_1}\subseteq V$, we obtain $V\in Y_1$, and consequently $(\{V\},Y_2,Y_3)\in R_\Diamond(U)$. Analogously, it can be proved that there exists $W\in Y_2$ such that $(Y_1,\{W\},Y_3)\in R_\Diamond(U)$. It is easy to see that $(\{V\},\{W\},Y_3)\in R_\Diamond(U)$, which concludes the proof.
\end{proof}

Having established the correspondence between $3$-BAMOs and PSI-frames, 
we now introduce a more specialized class of descriptive structures that will serve as the semantic counterpart of PSI-Algebras.

The operators $\Box_R$ and $\Diamond_R$ introduced above allow us to connect the relational structure of a PSI-frame with algebraic structures of pseudo-inference type. Intuitively, PSI-spaces are precisely those PSI-frames whose relational structure $R$ ensures that the associated operators on the clopen algebra $\mathscr{CO}(X)$ satisfy the axioms of a 3BAMO. This observation motivates the following definition.

\begin{definition}\label{def: pseudo inference-space}
    A \emph{pseudo-inference space} (or simply \emph{PSI-space}) is a PSI-frame $(X,  \tau, R)$ that satisfies the following four conditions:
    \begin{gather}
    R^{-1}(Y_1,Y_2,Y_3)\subseteq R^{-1}(Y_1,Y_2,Z)\cup R^{-1}(Y_1,Y_2,W) \cup R^{-1}(\mathrm{cl}(Z^c),\mathrm{cl}(W^c),Y_3)\,,\tag{PIF1}\label{PIF1}\\
        Y_1\cap Y_3 = \emptyset \Rightarrow R^{-1}(Y_1,Y_2,Y_3)=\emptyset\,,\tag{PIF2}\label{PIF2}\\
        Y_1\cap Y_2 \subseteq R^{-1}(Y_1, Y_1, Y_2)\,,\tag{PIF3}\label{PIF3}\\
        R^{-1}(Y_1,Y_2,Y_3)\subseteq R^{-1}(Y_2,Y_1,Y_3).\tag{PIF4}\label{PIF4}
    \end{gather}\QED
\end{definition}

With this definition, and also with Corollary \ref{psi-frame to 3BAMO}, we obtain a direct algebraic counterpart of PSI-spaces:

\begin{proposition}\label{space to algebra}
    Let $(X,\tau, R)$ be a PSI-space. Then, the pair $(\mathscr{CO}(X),\Diamond_{R})$ forms a PSI-Algebra.
\end{proposition}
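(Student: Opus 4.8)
The plan is to verify that $(\mathscr{CO}(X),\Diamond_R)$ satisfies the four PSI-Algebra axioms \eqref{PI1}--\eqref{PI4}, using Corollary~\ref{psi-frame to 3BAMO} (which already gives us that it is a 3BAMO) together with the four defining conditions \eqref{PIF1}--\eqref{PIF4} of a PSI-space. The key translation device is Proposition~\ref{psi-frames are m-frames}, which tells us $\Diamond_R(\vec U)=R^{-1}(\vec U)$ for clopen $\vec U$. Since clopen sets are self-complementary (for $U\in\mathscr{CO}(X)$, $\neg U = U^c = \mathrm{cl}(U^c)$ because $U^c$ is already closed), each axiom \eqref{PIFi} should rewrite directly into the corresponding \eqref{PIi} once we replace $R^{-1}$ by $\Diamond_R$ and negation by complementation.

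First I would establish the dictionary. For clopen $U$, $\neg U$ in the Boolean algebra $\mathscr{CO}(X)$ is exactly $U^c$, and since $U^c$ is clopen we have $\mathrm{cl}(U^c)=U^c=\neg U$. With this, \eqref{PIF1} reads
\[
R^{-1}(U_1,U_2,U_3)\subseteq R^{-1}(U_1,U_2,V)\cup R^{-1}(U_1,U_2,W)\cup R^{-1}(\neg V,\neg W,U_3),
\]
which, translating $R^{-1}$ to $\Diamond_R$ and passing from set inclusion to the Boolean order on $\mathscr{CO}(X)$, is precisely \eqref{PI1} with the substitution $a=U_1$, $b=U_2$, $f=U_3$, $d=\neg V$, $e=\neg W$ (so $\neg d = V$, $\neg e = W$). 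Similarly \eqref{PIF4} translates verbatim into \eqref{PI4}, and \eqref{PIF3} gives $U_1\cap U_2\subseteq R^{-1}(U_1,U_1,U_2)$, i.e.\ $a\wedge b\leq\Diamond_R(a,a,b)$, which after relabeling yields \eqref{PI3}.

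The only axiom requiring a small argument is \eqref{PI2}, namely $\Diamond_R(a,b,\neg a)=0$. This should follow from \eqref{PIF2}: taking $Y_1=U_1$, $Y_3=\neg U_1$, we have $Y_1\cap Y_3 = U_1\cap U_1^c=\emptyset$, so \eqref{PIF2} forces $R^{-1}(U_1,U_2,\neg U_1)=\emptyset$, i.e.\ $\Diamond_R(a,b,\neg a)=0$ after translation. I would present each of the four verifications as a short paragraph, using Proposition~\ref{psi-frames are m-frames} to interchange $\Diamond_R$ and $R^{-1}$, and the clopen-complement identity to interchange $\neg$ and $(-)^c$.

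The main obstacle, though a minor one, is bookkeeping the variable matching in \eqref{PI1}/\eqref{PIF1}: one must be careful that the third-coordinate monotonicity direction and the complement identifications $\neg d = V$, $\neg e = W$ line up correctly with the order of the disjuncts, and that the inclusion of subsets corresponds to $\leq$ in the clopen algebra in the right direction. I would double-check that the $\mathrm{cl}(Z^c),\mathrm{cl}(W^c)$ appearing in \eqref{PIF1} genuinely collapse to $\neg Z,\neg W$ under the clopen hypothesis, since this is the single point where the topological formulation of the space-level axioms meets the purely Boolean reading required by the algebra. Once that identification is confirmed, the proof is a direct transcription and no deeper structural input is needed.
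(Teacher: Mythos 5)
Your proposal is correct and matches the paper's intended argument exactly: the paper states this proposition without proof, as an immediate consequence of Definition~\ref{def: pseudo inference-space} and Corollary~\ref{psi-frame to 3BAMO}, and the transcription you describe (using Proposition~\ref{psi-frames are m-frames} together with the identity $\mathrm{cl}(U^c)=U^c=\neg U$ for clopen $U$) is precisely that consequence spelled out. The only detail worth adding is the degenerate case where some clopen argument equals $\emptyset$ or $X$, so that the corresponding set falls outside $\mathscr{C}_0(X)$ and \eqref{PIF1}--\eqref{PIF4} do not literally apply; those instances follow from \eqref{MO1} and monotonicity, which you already have from the 3BAMO structure.
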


Moreover, this correspondence is tight: any 3BAMO can be represented by a relational structure that satisfies the open conditions of a PSI-space. In what follows, we make this explicit by showing how the relational inverse $R^{-1}$ captures the axioms of the 3BAMO and, conversely, how a 3BAMO induces a PSI-space structure on its ultrafilter space.

\begin{lemma}\label{Relational PI1}
If $\frak{A}\defeq(\mathbf{A},\Diamond)$ is a 3BAMO, then $\frak{A}$ satisfies \eqref{PI1}\footnote{Each condition for pseudo-inference frames is formulated in an open form (without quantifiers). When we write that a frame $\frF$ satisfies an open condition we mean that $\frF$ satisfies its universal closure.} iff $\Uf(\frak{A})$ satisfies \eqref{PIF1}.
\end{lemma}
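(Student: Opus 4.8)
The plan is to route both implications through the algebraic translation of $R_{\Diamond}$ given by \eqref{def: R diamond}: for any triple of closed sets $(Y_1,Y_2,Y_3)$ one has $U\in R_{\Diamond}^{-1}(Y_1,Y_2,Y_3)$ precisely when $\Diamond(a,b,c)\in U$ for all $a\in F_{Y_1}$, $b\in F_{Y_2}$, $c\in F_{Y_3}$; equivalently, $U\notin R_{\Diamond}^{-1}(Y_1,Y_2,Y_3)$ exactly when some $(a,b,c)\in F_{Y_1}\times F_{Y_2}\times F_{Y_3}$ satisfies $\Diamond(a,b,c)\notin U$. The one frame-theoretic fact I will need about the closure-of-complement coordinates is a filter identity: since every $\beta(p)$ is clopen, hence closed, $p\in F_{\mathrm{cl}(Z^c)}$ holds iff $\mathrm{cl}(Z^c)\subseteq\beta(p)$ iff $Z^c\subseteq\beta(p)$ iff $\beta(\neg p)\subseteq Z$. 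Coupled with $z\in F_Z$, i.e.\ $Z\subseteq\beta(z)$, this gives $\beta(\neg p)\subseteq\beta(z)$ and therefore $\neg p\leq z$, since $\beta$ is an order embedding.

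The implication from \eqref{PIF1} to \eqref{PI1} is the easy one: I would specialize \eqref{PIF1} to clopen arguments $Y_1=\beta(a)$, $Y_2=\beta(b)$, $Y_3=\beta(f)$, $Z=\beta(\neg d)$, $W=\beta(\neg e)$, so that $\mathrm{cl}(Z^c)=\beta(d)$ and $\mathrm{cl}(W^c)=\beta(e)$. Proposition~\ref{psi-frames are m-frames} and Proposition~\ref{stone-relation lemma}(1) together identify each $R_{\Diamond}^{-1}(\beta(x),\beta(y),\beta(z))$ with $\beta(\Diamond(x,y,z))$; since $\beta$ sends unions to joins and reflects inclusions, the clopen instance of \eqref{PIF1} collapses exactly to the inequality \eqref{PI1}.

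For the converse I would argue by contraposition. If $U$ lies outside the right-hand side of \eqref{PIF1}, the translation above yields three witnesses: $a_1\in F_{Y_1},b_1\in F_{Y_2},z\in F_Z$ with $\Diamond(a_1,b_1,z)\notin U$; $a_2\in F_{Y_1},b_2\in F_{Y_2},w\in F_W$ with $\Diamond(a_2,b_2,w)\notin U$; and $p\in F_{\mathrm{cl}(Z^c)},q\in F_{\mathrm{cl}(W^c)},c\in F_{Y_3}$ with $\Diamond(p,q,c)\notin U$. I then set $a\defeq a_1\wedge a_2\in F_{Y_1}$ and $b\defeq b_1\wedge b_2\in F_{Y_2}$ and instantiate \eqref{PI1} at these $a,b$ with $d\defeq p$, $e\defeq q$, $f\defeq c$. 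The filter identity gives $\neg p\leq z$ and $\neg q\leq w$, so monotonicity of $\Diamond$ forces $\Diamond(a,b,\neg p)\leq\Diamond(a_1,b_1,z)$ and $\Diamond(a,b,\neg q)\leq\Diamond(a_2,b_2,w)$; hence none of the three summands $\Diamond(a,b,\neg p)$, $\Diamond(a,b,\neg q)$, $\Diamond(p,q,c)$ lies in $U$. Because $U$ is prime, their join is outside $U$, and \eqref{PI1} together with the upward closure of $U$ gives $\Diamond(a,b,c)\notin U$, placing $U$ outside $R_{\Diamond}^{-1}(Y_1,Y_2,Y_3)$, as needed.

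I expect the only delicate point to be this forward direction, and specifically the handling of the closure-of-complement coordinates: the whole argument hinges on reading their filters correctly through the equivalence $p\in F_{\mathrm{cl}(Z^c)}\Leftrightarrow \neg p\leq z$, which is precisely what aligns the witnesses $p,q$ with the negated third coordinates $\neg d,\neg e$ of \eqref{PI1}. It is worth noting that no compactness argument is required: since failure of membership in $R_{\Diamond}(U)$ is witnessed by a single algebraic triple, the three witnesses merge directly via finite meets and one application of \eqref{PI1}.
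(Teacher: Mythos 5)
Your proposal is correct and follows essentially the same route as the paper's proof: the easy direction instantiates \eqref{PIF1} at clopen arguments and reads off \eqref{PI1} via Proposition~\ref{stone-relation lemma}, and the hard direction merges the witnesses from the three failures by finite meets, uses the filter identity $p\in F_{\Cl(Z^c)}\Leftrightarrow \beta(\neg p)\subseteq Z$ to get $\neg p\leq z$ and $\neg q\leq w$, and concludes with one application of \eqref{PI1} and primeness of the ultrafilter. The only differences are cosmetic: you phrase the converse contrapositively (excluding $U$ from the left-hand side) where the paper argues directly that $U$ lands in the third term, and you choose $Z=\beta(\neg d)$ instead of $\beta(d)$ in the forward instantiation.
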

\begin{proof}
Assume that \eqref{PIF1} holds in $\Uf(\frak{A})$. In particular, this entails that for all $a, b, d,e, f \in A $,  
\[
\begin{aligned}
R_{\Diamond}^{-1}(\beta(a),\beta(b),\beta(f)) \subseteq\;&
R_{\Diamond}^{-1}(\beta(a),\beta(b),\beta(d)) \cup {}\\
& R_{\Diamond}^{-1}(\beta(a),\beta(b),\beta(e)) \cup
R_{\Diamond}^{-1}(\beta(\neg d),\beta(\neg e),\beta(f)).
\end{aligned}
\]

Since for every $x \in A$ we have $\Cl(\beta(x)^c) = \beta(\neg x)$, by the properties of $\beta$ and Proposition~\ref{stone-relation lemma}, we may conclude that \eqref{PI1} holds.
 
\smallskip
    
For the converse, assume that $\mathfrak{A}$ satisfies \eqref{PI1}.  
Fix closed non-empty sets of ultrafilters $Y_1, Y_2, Y_3, Z,$ and $W$.  
Let $U$ be an ultrafilter of $\mathbf{A}$ such that  
(a) $U \in R_{\Diamond}^{-1}(Y_1,Y_2,Y_3)$,  
(b) $U \notin R_{\Diamond}^{-1}(Y_1,Y_2,Z)$, and  
(c) $U \notin R_{\Diamond}^{-1}(Y_1,Y_2,W)$.  
We will show that $U \in \RD(\Cl(Z^c), \Cl(W^c), Y_3)$.

To this end, let $p \in F_{\Cl(Z^c)}$, $q \in F_{\Cl(W^c)}$, and $r \in F_{Y_3}$.  
By (b), there exist $a \in F_{Y_1}$, $b \in F_{Y_2}$, and $c \in F_Z$ such that $\Diamond(a,b,c) \notin U$.  
Similarly, by (c), there exist $x \in F_{Y_1}$, $y \in F_{Y_2}$, and $l \in F_W$ such that $\Diamond(x,y,l) \notin U$.  
Hence, letting $s = a \wedge x$ and $t = b \wedge y$, by the monotonicity of $\Diamond$ in each coordinate we obtain  
(d) $\neg\Diamond(s,t,c) \wedge \neg\Diamond(s,t,l) \in U$.

Moreover, since $\beta(\neg p) \subseteq Z \subseteq \beta(c)$ and $\beta(\neg q) \subseteq W \subseteq \beta(l)$ by assumption, it follows that $\neg p \leq c$ and $\neg q \leq l$.  
Applying the monotonicity of $\Diamond$ to (d), we get  
$\neg\Diamond(s,t,\neg p) \wedge \neg\Diamond(s,t,\neg q) \in U$.  
Together with (a), this yields  
$\Diamond(s,t,r) \wedge \neg\Diamond(s,t,\neg p) \wedge \neg\Diamond(s,t,\neg q) \in U$.  
Therefore, by \eqref{PI1}, we obtain $\Diamond(p,q,r) \in U$.  
This proves that $U \in \RD(\Cl(Z^c),\Cl(W^c),Y_3)$, which in turn entails that \eqref{PIF1} holds, as claimed.
\end{proof}

\begin{lemma}\label{Relational PI2}
If $\frak{A}$ is a 3BAMO, then $\frak{A}$ satisfies \eqref{PI2} iff $\Uf(\frak{A})$ satisfies \eqref{PIF2}.
\end{lemma}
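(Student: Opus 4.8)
The plan is to move between the algebraic condition \eqref{PI2} and the frame condition \eqref{PIF2} using the machinery already assembled: the quasi-equational reformulation of \eqref{PI2} in Proposition~\ref{quasiecuation PI2}, the identity $\RD^{-1}(\vec{U})=\Diamond_{\RD}(\vec{U})=\beta(\Diamond(a,b,c))$ for clopen arguments (Propositions~\ref{psi-frames are m-frames} and~\ref{stone-relation lemma}(1)), and the dual correspondence $F\mapsto Y_F$, $Y\mapsto F_Y$ between filters of $\mathbf{A}$ and closed subsets of $\Ult(\mathbf{A})$. Throughout I use that, by \eqref{def: R diamond}, membership $U\in \RD^{-1}(Y_1,Y_2,Y_3)$ means precisely $F_{Y_1}\times F_{Y_2}\times F_{Y_3}\subseteq \Diamond^{-1}[U]$.

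For the direction \eqref{PI2}$\Rightarrow$\eqref{PIF2}, I would take non-empty closed sets $Y_1,Y_2,Y_3$ with $Y_1\cap Y_3=\emptyset$. The crucial step is to read this disjointness algebraically: since $Y_i=\{U:F_{Y_i}\subseteq U\}$, emptiness of $Y_1\cap Y_3$ says no ultrafilter contains both $F_{Y_1}$ and $F_{Y_3}$, so by compactness of the Stone space (the ultrafilter lemma) the filter generated by $F_{Y_1}\cup F_{Y_3}$ is improper, and since these are filters this yields $a\in F_{Y_1}$ and $f\in F_{Y_3}$ with $a\wedge f=0$. Then, if some $U$ belonged to $\RD^{-1}(Y_1,Y_2,Y_3)$, choosing any $b\in F_{Y_2}$ (e.g., $b=1$, which lies in every proper filter) we would get $\Diamond(a,b,f)\in U$; but $a\wedge f=0$ forces $\Diamond(a,b,f)=0$ by \eqref{PI2} in the quasi-equational form of Proposition~\ref{quasiecuation PI2}, whence $0\in U$, contradicting that $U$ is proper. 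Thus $\RD^{-1}(Y_1,Y_2,Y_3)=\emptyset$.

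For the converse \eqref{PIF2}$\Rightarrow$\eqref{PI2}, I would fix $a,b\in A$ and aim at $\Diamond(a,b,\neg a)=0$. The degenerate cases $a\in\{0,1\}$ or $b=0$ follow at once from \eqref{MO1}, so I may assume $a\neq 0,1$ and $b\neq 0$, making $\beta(a),\beta(b),\beta(\neg a)$ non-empty clopen (hence closed) sets. Since $\beta(a)\cap\beta(\neg a)=\beta(a\wedge \neg a)=\emptyset$, condition \eqref{PIF2} gives $\RD^{-1}(\beta(a),\beta(b),\beta(\neg a))=\emptyset$. By Propositions~\ref{psi-frames are m-frames} and~\ref{stone-relation lemma}(1) this set equals $\beta(\Diamond(a,b,\neg a))$, so $\beta(\Diamond(a,b,\neg a))=\emptyset$, and injectivity of $\beta$ forces $\Diamond(a,b,\neg a)=0$, which is exactly \eqref{PI2}.

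The only genuinely delicate point is the translation of $Y_1\cap Y_3=\emptyset$ into witnesses $a\in F_{Y_1}$, $f\in F_{Y_3}$ with $a\wedge f=0$; this is where compactness of $\Ult(\mathbf{A})$ is used in an essential way, and everything else is routine bookkeeping through the dualities established above. I note that $Y_2$ plays no role beyond supplying an element $b$, which mirrors the fact that \eqref{PI2} is already equivalent to its $b=1$ instance by Proposition~\ref{quasiecuation PI2}.
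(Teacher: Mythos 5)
Your proposal is correct and follows essentially the same route as the paper: the direction \eqref{PIF2}$\Rightarrow$\eqref{PI2} goes through $R_{\Diamond}^{-1}(\beta(a),\beta(b),\beta(\neg a))=\beta(\Diamond(a,b,\neg a))$ exactly as in the text, and your other direction is just the contrapositive rearrangement of the paper's argument, using the same translation of $Y_1\cap Y_3=\emptyset$ into the improperness of the filter generated by $F_{Y_1}\cup F_{Y_3}$ (the paper cites the dual isomorphism $F_{Y_1}\vee F_{Y_3}=F_{Y_1\cap Y_3}$ where you invoke the ultrafilter lemma, which is the same fact). Your explicit handling of the degenerate cases $a\in\{0,1\}$, $b=0$ is a small extra care the paper omits, but not a different approach.
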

\begin{proof}
    For the right-to-left direction, assume that $\Uf(\frak{A})$ satisfies \eqref{PIF2}. Then since $\beta(a)\cap \beta(\neg a)=\emptyset$, by \eqref{PIF2} and Proposition \ref{stone-relation lemma}, $R_{\Diamond}^{-1}(\beta(a),\beta(b),\beta(\neg a))=\beta(\Diamond(a,b,\neg a))=\beta(0)$. Thus \eqref{PI2} obtains in $\frak{A}$. 
    
    \smallskip

For the left-to-right direction, suppose that $\frak{A}$ satisfies \eqref{PI2}.  
We proceed by contraposition. Let $U$ be an ultrafilter in $\RD^{-1}(Y_1,Y_2,Y_3)$, that is,  
$F_{Y_{1}} \times F_{Y_{2}} \times F_{Y_{3}} \subseteq \Diamond^{-1}[U]$.  
For any triple $(a,b,f)$ from this product, we have $\Diamond(a,b,f) \neq 0$.  
Therefore, by \eqref{PI2} and Proposition~\ref{quasiecuation PI2}, for every pair of elements $a \in F_{Y_1}$ and $f \in F_{Y_3}$, it follows that $a \wedge f \neq 0$.  

Consequently,
\[
F_{Y_1} \vee F_{Y_3} = \mathsf{Fg}^{\mathbf{A}}(F_{Y_1} \cup F_{Y_3}) \neq A = F_{\emptyset},
\]
where $\mathsf{Fg}^{\mathbf{A}}(F_{Y_1} \cup F_{Y_3})$ denotes the filter of $\mathbf{A}$ generated by $F_{Y_1} \cup F_{Y_3}$.  
By the standard duality between closed subsets of the Stone space $\mathrm{Ul}(\mathbf{A})$ and the filters of $\mathbf{A}$, it is well known that  
$F_{Y_1} \vee F_{Y_3} = F_{Y_1 \cap Y_3}$.  
Hence, we conclude that $Y_1 \cap Y_3 \neq \emptyset$, which shows that $\Uf(\frak{A})$ satisfies \eqref{PIF2}, as claimed.
\end{proof}

\begin{remark}
The condition \eqref{PIF2} cannot be replaced by the simpler condition
\begin{equation}\label{Ecua Rem}
    R^{-1}(Y_1,Y_2,\Cl(Y_1^c))=\emptyset.
\end{equation}
Indeed, although \eqref{Ecua Rem} implies \eqref{PI2}, since one particular instance of \eqref{Ecua Rem} is
\[
R^{-1}(\beta(a),Y_2,\beta(\neg a))=\emptyset\,,
\]
\eqref{PI2} does not entail \eqref{Ecua Rem}, in general. To see this, consider the PSI-Algebra \((\mathcal{P}(\mathbb{N}),\Diamond)\) with \(\Diamond(A,B,C)\defeq A\cap B\cap C\) (cf. Example \ref{smallest Diamond}). Recall that \(\mathrm{Ult}(\mathcal{P}(\mathbb{N}))\) is homeomorphic to the Stone--\v{C}ech compactification \(\sigma\mathbb{N}\) of \(\mathbb{N}\). Let
\[
Y\defeq\sigma\mathbb{N}-\mathbb{N},
\]
which is neither empty nor clopen. We may think of $Y$ as the set of all non-principal ultrafilters of $\power(\mathbb{N})$. Let us compute the value of $R_\Diamond^{-1}\bigl(Y,\mathrm{Ult}(\mathcal{P}(\mathbb{N})),\Cl(Y^c)\bigr)$.
Note that
\[
F_{Y}=\{A\subseteq\mathbb{N}:A\text{ is cofinite}\}
\qquad\text{and}\qquad
F_{\mathrm{Ult}(\mathcal{P}(\mathbb{N}))}=F_{\Cl(Y^c)}=\{\mathbb{N}\}\,,
\]
as $\Cl(Y^c)=\mathrm{Ult}(\mathcal{P}(\mathbb{N}))$. So it follows that,
\[
U\in R_\Diamond^{-1}\bigl(Y,\mathrm{Ult}(\mathcal{P}(\mathbb{N})),\Cl(Y^c)\bigr)
\Longleftrightarrow F_Y\subseteq U\,.
\]
Thus \(U\) must contain every cofinite subset of \(\mathbb{N}\); equivalently, \(U\) is a non-principal ultrafilter. Hence
\[
R_\Diamond^{-1}\bigl(Y,\mathrm{Ult}(\mathcal{P}(\mathbb{N})),\Cl(Y^c)\bigr)=Y\neq\varnothing\,,
\]
and therefore \eqref{Ecua Rem} fails in this case.\QED
\end{remark}

\begin{lemma}\label{lem:preimage-simpler}
    Let $(\mathbf{A},\Diamond)$ be a 3BAMO. Then, for all closed sets $X,Y$ of ultrafilters of $\mathbf{A}$ and for any ultrafilter $U$, we have
\[
F_X \times F_X \times F_Y \subseteq \Diamond^{-1}[U] \quad \text{if and only if} \quad 
\Diamond(a,a,b) \in U \ \text{for all } a \in F_X \text{ and } b \in F_Y.
\]
\end{lemma}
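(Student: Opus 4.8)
The plan is to settle the forward implication by a trivial specialization and to obtain the reverse implication from the filter structure of $F_X$ together with the monotonicity of $\Diamond$. The only structural fact I shall need is that, as an immediate consequence of \eqref{MO2} and \eqref{MO3}, the operator $\Diamond$ is monotone in each of its first two coordinates (the third coordinate stays fixed throughout, so its monotonicity is irrelevant here).

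For the left-to-right direction, assume $F_X \times F_X \times F_Y \subseteq \Diamond^{-1}[U]$. For arbitrary $a \in F_X$ and $b \in F_Y$ the triple $(a,a,b)$ belongs to $F_X \times F_X \times F_Y$, whence $\Diamond(a,a,b) \in U$. This is precisely the right-hand condition, so nothing further is required.

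The content of the lemma lies in the converse. Assuming $\Diamond(a,a,b) \in U$ for all $a \in F_X$ and $b \in F_Y$, I would fix an arbitrary triple $(a_1,a_2,b) \in F_X \times F_X \times F_Y$ and exploit that $F_X$, being a filter, is closed under meets: setting $a \defeq a_1 \wedge a_2$, we have $a \in F_X$, so the hypothesis gives $\Diamond(a,a,b) \in U$. Since $a \leq a_1$ and $a \leq a_2$, monotonicity in the first two coordinates yields $\Diamond(a,a,b) \leq \Diamond(a_1,a,b) \leq \Diamond(a_1,a_2,b)$, and the upward closure of $U$ then forces $\Diamond(a_1,a_2,b) \in U$. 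As $(a_1,a_2,b)$ was arbitrary, this establishes $F_X \times F_X \times F_Y \subseteq \Diamond^{-1}[U]$.

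I do not anticipate a genuine obstacle: the whole point is the observation that a pair of distinct first- and second-coordinate entries drawn from $F_X$ can be collapsed to their common meet $a_1 \wedge a_2$, which survives in $F_X$ precisely because $F_X$ is closed under finite meets. Monotonicity of $\Diamond$ then lifts the diagonal value $\Diamond(a,a,b)$ up to the off-diagonal triple, and membership in the ultrafilter $U$ is preserved by its upward closure.
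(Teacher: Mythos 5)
Your proof is correct and follows essentially the same route as the paper: the forward direction by specializing to diagonal triples, and the reverse direction by replacing a pair $a_1,a_2\in F_X$ with their meet $a_1\wedge a_2\in F_X$ and then lifting $\Diamond(a_1\wedge a_2,a_1\wedge a_2,b)$ to $\Diamond(a_1,a_2,b)$ via monotonicity and the upward closure of $U$. No issues.
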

\begin{proof}
    The implication from left to right is obvious. For the reverse one, take $a,b\in F_{X}$ and $c\in F_Y$. Since $F_X$ is a filter, we have that $a\wedge b\in F_{X}$, and thus $\Diamond(a\wedge b,a\wedge b,c)\in U$. As the diamond is a monotonic operator and $U$ is upward closed, we obtain that $\Diamond(a,b,c)\in U$.
\end{proof}

\begin{lemma}\label{Relational PI3}
If $\frak{A}$ is a 3BAMO, then $\frak{A}$ satisfies \eqref{PI3} iff $\Uf(\frak{A})$ satisfies \eqref{PIF3}.
\end{lemma}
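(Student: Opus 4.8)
The plan is to transfer the inequality \eqref{PI3} across the Stone duality embodied by $\beta$, using the identification coming from the duality between filters of $\mathbf{A}$ and closed subsets of $\Ult(\mathbf{A})$: for a closed set $Y$ and an element $c\in A$ one has $c\in F_Y$ iff $Y\subseteq\beta(c)$. Alongside this I would use the translations $\RD^{-1}(\beta(a),\beta(b),\beta(c))=\Diamond_{\RD}(\beta(a),\beta(b),\beta(c))=\beta(\Diamond(a,b,c))$ supplied by Propositions~\ref{psi-frames are m-frames} and~\ref{stone-relation lemma}(1) (recall that $\Uf(\frak{A})$ is a PSI-frame by Proposition~\ref{3BAMO to psiframe}, so Proposition~\ref{psi-frames are m-frames} applies). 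The decisive simplification is Lemma~\ref{lem:preimage-simpler}, which reduces membership of a full product $F_{Y_1}\times F_{Y_1}\times F_{Y_2}$ in $\Diamond^{-1}[U]$ to the diagonal instances $\Diamond(a,a,b)\in U$---precisely the shape of \eqref{PI3}.

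For the implication from \eqref{PI3} to \eqref{PIF3}, I would fix non-empty closed sets $Y_1,Y_2$ and an ultrafilter $U\in Y_1\cap Y_2$, aiming to show $U\in\RD^{-1}(Y_1,Y_1,Y_2)$, i.e.\ $F_{Y_1}\times F_{Y_1}\times F_{Y_2}\subseteq\Diamond^{-1}[U]$. By Lemma~\ref{lem:preimage-simpler} it suffices to check $\Diamond(a,a,b)\in U$ for every $a\in F_{Y_1}$ and $b\in F_{Y_2}$. For such $a,b$ we have $Y_1\subseteq\beta(a)$ and $Y_2\subseteq\beta(b)$, so $U\in Y_1\cap Y_2$ yields $a,b\in U$ and hence $a\wedge b\in U$. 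Applying \eqref{PI3} gives $a\wedge b\leq\Diamond(a,a,b)$, and upward closure of $U$ forces $\Diamond(a,a,b)\in U$, as needed.

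For the converse, assuming \eqref{PIF3} holds in $\Uf(\frak{A})$, I would prove $a\wedge f\leq\Diamond(a,a,f)$ for all $a,f\in A$. When $a=0$ or $f=0$ the inequality is immediate, since both sides collapse to $0$ by \eqref{MO1}; so assume $a,f\neq 0$, whereby $\beta(a)$ and $\beta(f)$ are non-empty clopen sets and \eqref{PIF3} applies with $Y_1=\beta(a)$ and $Y_2=\beta(f)$. This gives $\beta(a)\cap\beta(f)\subseteq\RD^{-1}(\beta(a),\beta(a),\beta(f))$, and rewriting both sides through the translations above turns it into $\beta(a\wedge f)\subseteq\beta(\Diamond(a,a,f))$. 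Since $\beta$ reflects the order, $a\wedge f\leq\Diamond(a,a,f)$ follows.

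I expect the only genuine friction to lie in the diagonal reduction in the first implication: \eqref{PI3} constrains $\Diamond$ only along the diagonal of its first two arguments, whereas $\RD^{-1}(Y_1,Y_1,Y_2)$ is defined by the full product $F_{Y_1}\times F_{Y_1}\times F_{Y_2}$, and it is Lemma~\ref{lem:preimage-simpler} (leaning on monotonicity of $\Diamond$ and the fact that $F_{Y_1}$ is a filter) that bridges the gap. The degenerate cases $a=0$ and $f=0$ in the converse are a minor bookkeeping point, required only because \eqref{PIF3} quantifies over non-empty closed sets.
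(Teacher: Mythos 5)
Your proposal is correct and follows essentially the same route as the paper's proof: both directions hinge on the translation $\RD^{-1}(\beta(a),\beta(a),\beta(f))=\beta(\Diamond(a,a,f))$ from Proposition~\ref{stone-relation lemma} and on Lemma~\ref{lem:preimage-simpler} to reduce the full product $F_{Y_1}\times F_{Y_1}\times F_{Y_2}$ to diagonal instances. Your explicit treatment of the degenerate cases $a=0$ or $f=0$ (where $\beta(a)$ or $\beta(f)$ fails to be a nonempty closed set) is a small point of rigor the paper leaves implicit, but it does not change the argument.
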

\begin{proof}
    Firstly, let us assume \eqref{PIF3} for $\Uf(\frak{A})$. Then, for every $a,b,f\in B$, by properties of $\beta$ we have
\[
    \beta(a\wedge f)\subseteq R_{\Diamond}^{-1}(\beta(a), \beta(a), \beta(f))=\Diamond_{R_{\Diamond}}(\beta(a), \beta(a), \beta(f))=\beta(\Diamond(a,a,f))\,.
\]
Therefore, $a\wedge f\leq\Diamond(a,a,f)$.

\smallskip

Secondly, suppose that \eqref{PI3} obtains in $\frak{A}$. To derive \eqref{PIF3} for $\Uf(\frak{A})$, pick an arbitrary $U\in Y_1\cap Y_2$ with the intention to show that $\RD(U,Y_1,Y_1,Y_2)$. To this end, let $a\in F_{Y_1}$ and $f\in F_{Y_2}$. By the choice of $U$, $a\wedge f$ is in $U$, so by \eqref{PI3} we have that $\Diamond(a,a,f)\in U$. It follows from Lemma~\ref{lem:preimage-simpler} that $F_{Y_{1}}\times F_{Y_{1}}\times F_{Y_{2}}\subseteq\Diamond^{-1}[U]$, that is $U\in\RD^{-1}(Y_1,Y_1,Y_2)$.
\end{proof}

\begin{lemma}\label{Relational PI4}
If $\frak{A}$ is a 3BAMO, then $\frak{A}$ satisfies \eqref{PI4} iff $\Uf(\frak{A})$ satisfies \eqref{PIF4}.
\end{lemma}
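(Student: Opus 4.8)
The plan is to follow the template of Lemmas~\ref{Relational PI1}--\ref{Relational PI3}, taking advantage of the fact that both \eqref{PI4} and \eqref{PIF4} merely assert symmetry in the first two coordinates; this makes the present correspondence the most direct of the four. Throughout, I will use that, since $\frak{A}$ is a 3BAMO, Proposition~\ref{3BAMO to psiframe} guarantees that $\Uf(\frak{A})$ is a genuine PSI-frame, so Proposition~\ref{psi-frames are m-frames} lets me identify $\RD^{-1}(\vec{U})$ with $\DRD(\vec{U})$ on clopens, while Proposition~\ref{stone-relation lemma}(1) lets me compute $\DRD(\beta(a),\beta(b),\beta(c))=\beta(\Diamond(a,b,c))$.

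For the left-to-right direction I would assume $\frak{A}$ satisfies \eqref{PI4} and fix nonempty closed sets $Y_1,Y_2,Y_3$ of ultrafilters together with $U\in\RD^{-1}(Y_1,Y_2,Y_3)$, that is, $F_{Y_1}\times F_{Y_2}\times F_{Y_3}\subseteq\Diamond^{-1}[U]$. To show $U\in\RD^{-1}(Y_2,Y_1,Y_3)$, I would take arbitrary $b\in F_{Y_2}$, $a\in F_{Y_1}$, $f\in F_{Y_3}$; the hypothesis gives $\Diamond(a,b,f)\in U$, and \eqref{PI4} yields $\Diamond(a,b,f)\leq\Diamond(b,a,f)$, so the upward closure of the ultrafilter $U$ forces $\Diamond(b,a,f)\in U$. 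Hence $F_{Y_2}\times F_{Y_1}\times F_{Y_3}\subseteq\Diamond^{-1}[U]$, which is exactly $U\in\RD^{-1}(Y_2,Y_1,Y_3)$, establishing \eqref{PIF4}.

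For the right-to-left direction I would assume $\Uf(\frak{A})$ satisfies \eqref{PIF4} and instantiate it at the clopen triple $(\beta(a),\beta(b),\beta(f))$, obtaining $\RD^{-1}(\beta(a),\beta(b),\beta(f))\subseteq\RD^{-1}(\beta(b),\beta(a),\beta(f))$. Applying Propositions~\ref{psi-frames are m-frames} and \ref{stone-relation lemma}(1) to each side rewrites this as $\beta(\Diamond(a,b,f))\subseteq\beta(\Diamond(b,a,f))$; since the Stone map $\beta$ is an order embedding, and hence order-reflecting, this gives $\Diamond(a,b,f)\leq\Diamond(b,a,f)$, i.e. \eqref{PI4}.

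There is essentially no serious obstacle here: unlike Lemma~\ref{Relational PI1}, no Zorn/maximal-filter construction or de Morgan manipulation is required, and the whole argument reduces to transporting a Boolean inequality across $\beta$ and back. The only points demanding care are (i) invoking Proposition~\ref{3BAMO to psiframe} so that $\RD^{-1}$ may legitimately be read as the operator $\DRD$, and (ii) remembering that the passage from the clopen containment back to \eqref{PI4} uses that $\beta$ \emph{reflects} order, not merely preserves it.
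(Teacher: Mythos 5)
Your proposal is correct and follows essentially the same route as the paper: for the forward direction, pull a triple $(b,a,f)$ from $F_{Y_2}\times F_{Y_1}\times F_{Y_3}$, use \eqref{PI4} together with upward closure of the ultrafilter $U$; for the converse, instantiate \eqref{PIF4} at the clopen triple $(\beta(a),\beta(b),\beta(f))$ and use that $\beta$ reflects order. The paper's own proof is just a slightly terser version of exactly this argument.
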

\begin{proof}
    Firstly, assume \eqref{PI4} for $\frak{A}$. Let ($\dagger$) $\RD(U,Y_1,Y_2,Y_3)$ and take $(b,a,f)\in F_{Y_2}\times F_{Y_1}\times F_{Y_3}$. Thus $(a,b,f)\in F_{Y_1}\times F_{Y_2}\times F_{Y_3}$, and by ($\dagger$) we have that $\Diamond(a,b,f)\in U$. Thus, since $U$ is upward closed, by \eqref{PI4} we obtain that $\Diamond(b,a,f)\in U$. It follows that $\RD(U,Y_2,Y_1,Y_3)$ and in consequence $\Uf(\frak{A})$ satisfies \eqref{PI4}.

    \smallskip

    If \eqref{PIF4} holds for $\Uf(\frak{A})$, then for any $a,b,f\in\bfB$ we have that 
    \[
    \RD^{-1}(\beta(a),\beta(b),\beta(f))\subseteq \RD^{-1}(\beta(b),\beta(a),\beta(f))\,,
    \]
    since each $\beta(x)$ is clopen. Thus any ultrafilter that contains $\Diamond(a,b,f)$ must also contain $\Diamond(b,a,f)$, which is enough to conclude that \eqref{PI4} holds for $\frak{A}$.
\end{proof}

Below we combine lemmas \ref{Relational PI1}, \ref{Relational PI2}, \ref{Relational PI3} and \ref{Relational PI4}, in order to provide a characterization of PSI-Algebras by means of PSI-frames. 

\begin{theorem}\label{Psi-algebras as ultrafilter frames}
        Let $\frak{A}=(\mathbf{A},\Diamond)$ be a 3BAMO, let $\Uf(\frak{A})=(\Ult(\mathbf{A}),\tau_{\mathbf{A}}, \RD)$ be its associated PSI-frame. Then, for every $i\in\{1,2,3,4\}$ 
        \[
            \text{$\frak{A}$ satisfies \textup{(PI$i$)}}\quad\text{iff}\quad\text{$\Uf(\frak{A})$ satisfies \textup{(PIF$i$)}}\,.
        \]
        In consequence, $\frak{A}$ is a Pseudo-Inference Algebra iff $\Uf(\frak{A})$ is a PSI-space.
\end{theorem}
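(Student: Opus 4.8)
The plan is to recognize that this theorem is essentially a synthesis of the four preceding lemmas together with Proposition~\ref{3BAMO to psiframe}, so that the proof reduces to assembling results already established rather than carrying out new computation.

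First I would dispose of the per-index equivalences. For each $i\in\{1,2,3,4\}$, the assertion that $\frak{A}$ satisfies \textup{(PI$i$)} if and only if $\Uf(\frak{A})$ satisfies \textup{(PIF$i$)} is precisely the content of Lemmas~\ref{Relational PI1}, \ref{Relational PI2}, \ref{Relational PI3} and~\ref{Relational PI4}, respectively. Hence this first claim follows by citing these four lemmas, one for each index.

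For the concluding equivalence, I would unwind the two definitions and chain the biconditionals. By definition, $\frak{A}$ is a Pseudo-Inference Algebra exactly when it is a 3BAMO satisfying all of \eqref{PI1}--\eqref{PI4}, while $\Uf(\frak{A})$ is a PSI-space exactly when it is a PSI-frame satisfying all of \eqref{PIF1}--\eqref{PIF4}. The crucial point is that Proposition~\ref{3BAMO to psiframe} already guarantees that $\Uf(\frak{A})$ is a PSI-frame as soon as $\frak{A}$ is a 3BAMO; thus the frame axioms \eqref{DF1}--\eqref{DF3} hold automatically, and the residual content of the statement ``$\Uf(\frak{A})$ is a PSI-space'' is merely the conjunction of \eqref{PIF1}--\eqref{PIF4}. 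Applying the first part of the theorem for all four indices simultaneously then yields that $\frak{A}$ satisfies \eqref{PI1}--\eqref{PI4} if and only if $\Uf(\frak{A})$ satisfies \eqref{PIF1}--\eqref{PIF4}, which is exactly the desired equivalence between being a PSI-Algebra and being a PSI-space.

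No genuine obstacle arises in the theorem itself, since all the analytic labor was performed in the individual lemmas; the only point requiring attention is to keep Proposition~\ref{3BAMO to psiframe} explicitly in play, so that the frame conditions are accounted for and not tacitly assumed. The substantive difficulty lies upstream, notably in the backward direction of Lemma~\ref{Relational PI1}, where one must build suitable witnesses $s,t$ through monotonicity of $\Diamond$ and exploit the join structure encoded by $\Cl(Z^c)$ and $\Cl(W^c)$; by comparison, the present theorem is a clean bookkeeping step that packages those results into a single correspondence between the algebraic and relational semantics.
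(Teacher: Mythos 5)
Your proposal matches the paper exactly: the theorem is stated there as an immediate combination of Lemmas~\ref{Relational PI1}--\ref{Relational PI4}, with Proposition~\ref{3BAMO to psiframe} supplying the underlying PSI-frame structure, and no further argument is given. Your explicit remark that the frame axioms \eqref{DF1}--\eqref{DF3} are already secured by Proposition~\ref{3BAMO to psiframe} is a correct and slightly more careful rendering of the same bookkeeping.
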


Up to this point, we have established the dual equivalence between PSI-Algebras and PSI-spaces at the level of objects, showing how a 3BAMO can be represented as a relational PSI-space and vice versa. What remains in this section is to extend this correspondence to morphisms: that is, to describe how homomorphisms between PSI-Algebras correspond to certain maps between PSI-spaces. This analysis will enable us to define categories of PSI-Algebras and PSI-spaces, whose objects and morphisms are aligned via duality, ultimately leading to the equivalence of these categories.

Let \((\mathbf{A},\Diamond_{\mathbf{A}})\) and \((\mathbf{B},\Diamond_{\mathbf{B}})\) be PSI-algebras. 
A Boolean homomorphism \(h\colon \mathbf{A}\to \mathbf{B}\) is called a 
\emph{Pseudo-Inference semi-homomorphism} (PSI semi-homomorphism) if, 
for all \(a,b,c\in A\),
\[
h(\Diamond_{\mathbf{A}}(a,b,c)) \leq \Diamond_{\mathbf{B}}(h(a),h(b),h(c)).
\]

Dually, \(h\) is a \emph{Pseudo-Inference hemi-homomorphism} (PSI hemi-homomorphism) if
\[
\Diamond_{\mathbf{B}}(h(a),h(b),h(c)) \leq h(\Diamond_{\mathbf{A}}(a,b,c)),
\]
for all \(a,b,c\in A\). When equality holds,
\[
h(\Diamond_{\mathbf{A}}(a,b,c)) = \Diamond_{\mathbf{B}}(h(a),h(b),h(c)),
\]
we say that \(h\) is a \emph{Pseudo-Inference homomorphism} (PSI homomorphism).

For a function \(f\colon X\to Y\) and subsets \(Z\subseteq X\), \(W\subseteq Y\), 
we write \(f[Z]\) for the image of \(Z\) and \(f^{-1}[W]\) for the preimage of \(W\).

\begin{lemma}\label{morphisms to maps}
    Let $(\mathbf{A},\Diamond_{\mathbf{A}})$ and $(\mathbf{B},\Diamond_{\mathbf{B}})$ be two PSI-Algebras and let $h\colon\mathbf{A} \to \mathbf{B}$ be a homomorphism of Boolean algebras. If we set $f$ to denote the map $h^{-1}\colon\mathrm{Ul}(\mathbf{B})\to \mathrm{Ul}(\mathbf{A})$, then the following hold:
    \begin{enumerate}
    \item The following are equivalent:
    \begin{enumerate}
        \item $h$ is a PSI semi-homomorphism.
        \item For every $a,b,c\in A$, \[f^{-1}[R_{\Diamond_{\mathbf{A}}}^{-1}(\beta_{\mathbf{A}}(a),\beta_{\mathbf{A}}(b),\beta_{\mathbf{A}}(c))]\subseteq R_{\Diamond_{\mathbf{B}}}^{-1}(\beta_{\mathbf{B}}(h(a)),\beta_{\mathbf{B}}(h(b)),\beta_{\mathbf{B}}(h(c))).\]
        \item For every $U\in \mathrm{Ul}(\mathbf{B})$ and $\vec{Z}\in \mathscr{C}_{0}(\mathrm{Ul}(\mathbf{A}))^3$
            \[(f(U),\vec{Z})\in R_{\Diamond_{\mathbf{A}}}\; \Rightarrow\; \exists \vec{Y}\in R_{\Diamond_{\mathbf{B}}}(U), \; f[\vec{Y}]\subseteq \vec{Z},\]
            where $f[\vec{Y}]:=(f[Y_1],f[Y_2],f[Y_3])$.
    \end{enumerate}
        \item The following are equivalent:
        \begin{enumerate}
            \item $h$ is a PSI hemi-homomorphism.
            \item For every $a,b,c\in A$, \[ R_{\Diamond_{\mathbf{B}}}^{-1}(\beta_{\mathbf{B}}(h(a)),\beta_{\mathbf{B}}(h(b)),\beta_{\mathbf{B}}(h(c)))\subseteq f^{-1}[R_{\Diamond_{\mathbf{A}}}^{-1}(\beta_{\mathbf{A}}(a),\beta_{\mathbf{A}}(b),\beta_{\mathbf{A}}(c))].\]
            \item For every $U\in \mathrm{Ul}(\mathbf{B})$ and $\vec{Y}\in \mathscr{C}_{0}(\mathrm{Ul}(\mathbf{B}))^3$ 
            \[(U,\vec{Y})\in R_{\Diamond_{\mathbf{B}}}\; \Rightarrow (f(U),f[\vec{Y}])\in R_{\Diamond_{\mathbf{A}}}.\]
        \end{enumerate}
    \end{enumerate}
\end{lemma}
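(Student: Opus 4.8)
The plan is to route everything through two elementary ``dictionary'' identities and then read the three conditions in each part as successive translations of a single inequality between $h(\Diamond_{\mathbf{A}}(a,b,c))$ and $\Diamond_{\mathbf{B}}(h(a),h(b),h(c))$. First I would record the identity that drives the whole proof. Since $\Uf(\mathfrak{A})$ is a PSI-frame for every $3$BAMO (Proposition~\ref{3BAMO to psiframe}), Proposition~\ref{psi-frames are m-frames} combined with Proposition~\ref{stone-relation lemma}(1) gives, in each of $\mathbf{A}$ and $\mathbf{B}$,
\[
R_{\Diamond}^{-1}(\beta(a),\beta(b),\beta(c)) = \Diamond_{R_{\Diamond}}(\beta(a),\beta(b),\beta(c)) = \beta(\Diamond(a,b,c)).
\]
Second, the Stone dual $f=h^{-1}$ satisfies $f^{-1}[\beta_{\mathbf{A}}(a)]=\beta_{\mathbf{B}}(h(a))$, is continuous (so $f[\vec{Y}]$ is a triple of nonempty closed sets), and for any nonempty closed $Y\subseteq\mathrm{Ul}(\mathbf{B})$ one has $a\in F_{f[Y]}\iff h(a)\in F_{Y}$. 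With these in hand the equivalence (a)$\Leftrightarrow$(b) is immediate in both parts: applying $f^{-1}$ to the identity turns condition~(b) of Part~(1) into $\beta_{\mathbf{B}}(h(\Diamond_{\mathbf{A}}(a,b,c)))\subseteq\beta_{\mathbf{B}}(\Diamond_{\mathbf{B}}(h(a),h(b),h(c)))$, which by injectivity of $\beta_{\mathbf{B}}$ is exactly the semi-homomorphism inequality; reversing the inclusion handles Part~(2).

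The content lies in (b)$\Leftrightarrow$(c). For Part~(1), the direction (c)$\Rightarrow$(b) is short: if $f(U)\in R_{\Diamond_{\mathbf{A}}}^{-1}(\beta_{\mathbf{A}}(a),\beta_{\mathbf{A}}(b),\beta_{\mathbf{A}}(c))$, then monotonicity of $R_{\Diamond_{\mathbf{A}}}(f(U))$ (Proposition~\ref{psi-frames are m-frames}) puts the clopen triple itself into $R_{\Diamond_{\mathbf{A}}}(f(U))$, so (c) yields $\vec{Y}\in R_{\Diamond_{\mathbf{B}}}(U)$ with $f[\vec{Y}]\subseteq(\beta_{\mathbf{A}}(a),\beta_{\mathbf{A}}(b),\beta_{\mathbf{A}}(c))$; since $f[Y_i]\subseteq\beta_{\mathbf{A}}(a)\iff Y_i\subseteq f^{-1}[\beta_{\mathbf{A}}(a)]=\beta_{\mathbf{B}}(h(a))$, a second use of monotonicity gives $U\in R_{\Diamond_{\mathbf{B}}}^{-1}(\beta_{\mathbf{B}}(h(a)),\beta_{\mathbf{B}}(h(b)),\beta_{\mathbf{B}}(h(c)))$. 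The harder direction (b)$\Rightarrow$(c) is where I would concentrate. Given $\vec{Z}\in R_{\Diamond_{\mathbf{A}}}(f(U))$, I would build the witness by setting $G_i\defeq\mathsf{Fg}^{\mathbf{B}}(h[F_{Z_i}])$ and $Y_i\defeq\varphi(G_i)$. The inclusion $f[Y_i]\subseteq Z_i$ translates into $h[F_{Z_i}]\subseteq F_{Y_i}=G_i$, hence holds by construction, so the two remaining things to check are that each $G_i$ is proper (so $\vec{Y}\in\mathscr{C}_0(\mathrm{Ul}(\mathbf{B}))^3$) and that $G_1\times G_2\times G_3\subseteq\Diamond_{\mathbf{B}}^{-1}[U]$. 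Both follow from one computation: a typical $(p,q,r)\in G_1\times G_2\times G_3$ dominates $(h(a),h(b),h(c))$ for some $(a,b,c)\in F_{Z_1}\times F_{Z_2}\times F_{Z_3}$, so monotonicity of $\Diamond_{\mathbf{B}}$ together with~(a) gives $\Diamond_{\mathbf{B}}(p,q,r)\ge h(\Diamond_{\mathbf{A}}(a,b,c))\in U$; and properness drops out because $0\in G_i$ would force $\Diamond_{\mathbf{B}}(0,1,1)=0\in U$ (or a coordinate variant) via \eqref{MO1}, contradicting that $U$ is proper.

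For Part~(2) both directions of (b)$\Leftrightarrow$(c) are comparatively routine, since here closed sets are pushed forward rather than pulled back. For (c)$\Rightarrow$(b) I would again reduce to the clopen triple $\vec{Y}=(\beta_{\mathbf{B}}(h(a)),\beta_{\mathbf{B}}(h(b)),\beta_{\mathbf{B}}(h(c)))$ by monotonicity, apply~(c), and use $f[\beta_{\mathbf{B}}(h(a))]\subseteq\beta_{\mathbf{A}}(a)$ with monotonicity of $R_{\Diamond_{\mathbf{A}}}(f(U))$. For (b)$\Rightarrow$(c), given $(U,\vec{Y})\in R_{\Diamond_{\mathbf{B}}}$ I would verify $(f(U),f[\vec{Y}])\in R_{\Diamond_{\mathbf{A}}}$ straight from its defining inclusion $F_{f[Y_1]}\times F_{f[Y_2]}\times F_{f[Y_3]}\subseteq\Diamond_{\mathbf{A}}^{-1}[f(U)]$: for $a\in F_{f[Y_i]}$ the dictionary gives $h(a)\in F_{Y_i}$, so $(U,\vec{Y})\in R_{\Diamond_{\mathbf{B}}}$ yields $\Diamond_{\mathbf{B}}(h(a),h(b),h(c))\in U$, and~(a) with upward closure of $U$ gives $h(\Diamond_{\mathbf{A}}(a,b,c))\in U$, i.e. $\Diamond_{\mathbf{A}}(a,b,c)\in f(U)$.

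I expect the main obstacle to be precisely the witness construction in Part~(1), (b)$\Rightarrow$(c): one must exhibit a \emph{nonempty} triple of closed sets lying in $R_{\Diamond_{\mathbf{B}}}(U)$ and mapping into $\vec{Z}$, and the delicate point is that properness of the generated filters $G_i$ and their membership in $\Diamond_{\mathbf{B}}^{-1}[U]$ are established by the same monotonicity-plus-\eqref{MO1} argument rather than separately. Every other step reduces, through the two identities, to monotonicity of $\Diamond$ and the injectivity and upward-closure bookkeeping of Stone duality.
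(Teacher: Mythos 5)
Your proposal is correct and follows essentially the same route as the paper: (a)$\Leftrightarrow$(b) via the Stone dictionary and Proposition~\ref{stone-relation lemma}, (c)$\Rightarrow$(b) by monotonicity of $R(x)$, and the witness in Part~(1), (b)$\Rightarrow$(c), built from the filters generated by $h[F_{Z_i}]$ --- exactly the paper's $T_i$. Your explicit verification that these generated filters are proper (so that the $Y_i$ are nonempty closed sets, as required for membership in $R_{\Diamond_{\mathbf{B}}}(U)$) is a detail the paper leaves implicit under ``it is readily seen,'' but it does not alter the argument.
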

\begin{proof}
    We start by recalling that, by the Stone duality, it is the case that
    \begin{equation}\label{Property Stone duality}
        f^{-1}[\beta_{\mathbf{A}}(a)]=\beta_{\mathbf{B}}(h(a)),
        \end{equation}
        for every $a\in A$. Now we proceed with the proof. 

        \smallskip
        
        (1): (a) $\Leftrightarrow$ (b) This is immediate from (\ref{Property Stone duality}), Proposition \ref{stone-relation lemma} and properties of $\beta_{\mathbf{B}}$.

        \smallskip
        
        (c) $\Rightarrow$ (b) If $U\in f^{-1}[R_{\Diamond_{\mathbf{A}}}^{-1}(\beta_{\mathbf{A}}(a),\beta_{\mathbf{A}}(b),\beta_{\mathbf{A}}(c))]$, then $(f(U),(\beta_{\mathbf{A}}(a),\beta_{\mathbf{A}}(b),\beta_{\mathbf{A}}(c)))\in R_{\Diamond_{\mathbf{A}}}$. By (c), there exists $\vec{Y}\in R_{\Diamond_{\mathbf{B}}}(U)$ such that $f[\vec{Y}]\subseteq \vec{Z}$.  Therefore, 
        \begin{equation}\label{filters aux}
          F_{Y_1}\times F_{Y_2}\times F_{Y_3}\subseteq \Diamond_{\mathbf{B}}^{-1}[U]  
        \end{equation}
        and $f[Y_i]\subseteq \beta_{\mathbf{A}}(a_i)$, for every $1\leqslant i\leqslant 3$. In order to prove our claim, we need to show that
        \[F_{f^{-1}[\beta_{\mathbf{A}}(a_1)]}\times F_{f^{-1}[\beta_{\mathbf{A}}(a_2)]} \times F_{f^{-1}[\beta_{\mathbf{A}}(a_3)]}\subseteq \Diamond_{\mathbf{B}}^{-1}[U].\]
        To this end, let $x_1,x_2,x_3\in B$ be such that $f^{-1}[\beta_{\mathbf{A}}(a_i))] \subseteq \beta_{\mathbf{B}}(x_i)$, for every $1\leqslant i\leqslant 3$. Then, it follows that $Y_i\subseteq f^{-1}(\beta_{\mathbf{A}}(a_i))\subseteq \beta_{\mathbf{B}}(x_i)$. Thus from (\ref{filters aux}), we can conclude that $\Diamond_{\mathbf{B}}(x,y,z)\in U$. Hence, because of the latter and (\ref{Property Stone duality}), we can say that $U\in R_{\Diamond_{\mathbf{B}}}^{-1}(\beta_{\mathbf{B}}(h(a)),\beta_{\mathbf{B}}(h(b)),\beta_{\mathbf{B}}(h(c)))$, as claimed.

        \smallskip
        
        (b) $\Rightarrow$ (c). Let us assume that $(f(U),\vec{Z})\in R_{\Diamond_{\mathbf{A}}}$. Observe that by (a) and the monotonicity of $R_{\Diamond_{\mathbf{A}}}^{-1}$, we have that  
        \begin{gather*}
         f^{-1}[R_{\Diamond_{\mathbf{A}}}^{-1}(\vec{Z})]\subseteq R_{\Diamond_{\mathbf{B}}}^{-1}(\beta_{\mathbf{B}}(h(a_1)),\beta_{\mathbf{B}}(h(a_2)),\beta_{\mathbf{B}}(h(a_3)))\,,   
        \end{gather*}
        for every $a_i\in F_{Z_i}$. Thus, since $U\in f^{-1}[R_{\Diamond_{\mathbf{A}}}(\vec{Z})]$ by assumption, from the latter we obtain 
        \begin{gather}\label{filters aux2}
            \Diamond_{\mathbf{B}}(\beta_{\mathbf{B}}(h(a_1)),\beta_{\mathbf{B}}(h(a_2)),\beta_{\mathbf{B}}(h(a_3)))\in U.
        \end{gather}
        Let $T_{i}$ be the filter generated by $h[F_{Z_i}]$. It is immediate from (\ref{filters aux2}) and the monotonicity of $\Diamond_{\mathbf{B}}$ that 
        \[
        T_1\times T_2\times T_3\subseteq \Diamond_{\mathbf{B}}^{-1}[U]. 
        \]
        Consequently, if we take $Y_i:=Y_{T_i}$, it is readily seen that $\vec{Y}\in R_{\Diamond_{\mathbf{B}}}(U)$ and $f[\vec{Y}]\subseteq \vec{Z}$. This concludes the proof.

        \smallskip
        
        (2): (a) $\Leftrightarrow$ (b). This is straightforward from (\ref{Property Stone duality}), Proposition \ref{stone-relation lemma} and properties of $\beta_{\mathbf{B}}$.

        \smallskip
        
        (c) $\Rightarrow$ (b). Let us assume that $U\in R_{\Diamond_{\mathbf{B}}}^{-1}(\beta_{\mathbf{B}}(h(a)),\beta_{\mathbf{B}}(h(b)),\beta_{\mathbf{B}}(h(c)))$. By (c) we have that $f(U)\in R_{\Diamond_{\mathbf{B}}}^{-1}(f[\beta_{\mathbf{B}}(h(a))],f[\beta_{\mathbf{B}}(h(b))],f[\beta_{\mathbf{B}}(h(c))])$. Finally, from (\ref{Property Stone duality}) and the monotonicity of $R_{\Diamond_{\mathbf{B}}}^{-1}$, we can conclude that $f(U)\in R_{\Diamond_{\mathbf{A}}}^{-1}(\beta_{\mathbf{A}}(a),\beta_{\mathbf{A}}(b),\beta_{\mathbf{A}}(c))$. Hence, (b) holds.

        \smallskip
        
        (b) $\Rightarrow$ (c). Suppose that $(U,\vec{Y})\in R_{\Diamond_{\mathbf{B}}}$. Then, by the monotonicity of $R_{\Diamond_{\mathbf{B}}}^{-1}$, it is the case that (k) $U\in R_{\Diamond_{\mathbf{B}}}^{-1}(\beta_{\mathbf{B}}(b_1),\beta_{\mathbf{B}}(b_2),\beta_{\mathbf{B}}(b_3))$, for every $b_1,b_2,b_3\in B$, such that $Y_i\subseteq \beta_{\mathbf{B}}(b_i)$. In order to prove our claim, let $a_1,a_2,a_3\in A$ such that $F[Y_i]\subseteq \beta_{\mathbf{A}}(a_i)$. Then, by (\ref{Property Stone duality}), it follows that $Y_i\subseteq \beta_{\mathbf{B}}(h(a_i))$. So, from (b) and (k) we can conclude that $(f(U), f[\vec{Y}])\in R_{\Diamond_{\mathbf{A}}}$, as required.
\end{proof}

Let $(X_1,\tau_1, R_1)$ and $(X_2, \tau_2, R_2)$ be two PSI-spaces, and let $f\colon X_1 \to X_2$ be a function. Consider the following conditions:

\begin{enumerate}[label=(Sp\arabic*),ref=Sp\arabic*]
    \item \label{Sp1} For any $U \in \mathscr{CO}(X_2)$, we have $f^{-1}[U] \in \mathscr{CO}(X_1)$.
    \item \label{Sp2} If $(x, \vec{Y}) \in R_1$, then $(f(x) , f[\vec{Y}]) \in R_2$, where 
    \[
    f[Z] = \{ f(z) \mid z \in Z \} \quad \text{and} \quad f[\vec{Y}] = (f[Y_1], f[Y_2], f[Y_3])\,.
    \]
    
    \item \label{Sp3} If $(f(x) , \vec{Z}) \in R_2$, then there exists $\vec{Y} \in \mathscr{C}_0 (X_1)^3$ such that $Y \in R_1(x)$ and $f[\vec{Y}] \subseteq \vec{Z}$. 
\end{enumerate}

\begin{definition}\label{psi-spaces morphism}
    Let $f\colon X_1 \to X_2$ be a function between the PSI-spaces $(X_1,\tau_1, R_1)$ and $(X_2, \tau_2, R_2)$. Then, we say that $f$ is a \emph{semi-PSI map} if (Sp1) and (Sp2) hold. Further, $f$ is a \emph{hemi-PSI map} if (Sp1) and (Sp3) hold. Finally, we say that $f$ is a \emph{PSI map} if it is both a semi-PSI map and a hemi-PSI map.\QED
\end{definition}

    The identity function on a PSI-space acts as a semi-PSI (hemi-PSI or PSI) map, and when two semi-PSI (hemi-PSI or PSI) maps are composed functionally, the result is also a semi-PSI (hemi-PSI or PSI) map. This structure gives rise to three categories: the category $\mathsf{PSIsp_0}$, which consists of PSI-spaces and semi-PSI maps, the category $\mathsf{PSIsp_1}$, which consists of PSI-spaces and hemi-PSI maps, and $\mathsf{PSIsp_2}$ which consists of PSI-spaces and PSI maps. Similarly, the standard composition of two semi-homomorphisms (hemi-homomorphisms or homomorphisms) results in another semi-homomorphism (hemi-homomorphism or homomorphism), the identity function on a PSI-Algebra is itself a semi-homomorphism (hemi-homomorphism or homomorphism). Consequently, PSI-Algebras and their semi-homomorphisms form a concrete category, denoted as $\mathsf{PSI_0}$. Likewise, the category with PSI-Algebras as objects and hemi-homomorphisms as arrows is denoted by $\mathsf{PSI_1}$, and the category with PSI-Algebras as objects and homomorphisms as arrows is denoted by $\mathsf{PSI_2}$.

The following result is immediate, so we omit its proof.

\begin{proposition}\label{isos in algebras}
    Isomorphisms on the category $\mathsf{PSI_0}$, $\mathsf{PSI_1}$ and $\mathsf{PSI_2}$ corresponds precisely with bijective semi-homomorphisms, bijective hemi-homomorphisms and bijective homomorphisms, respectively.
\end{proposition}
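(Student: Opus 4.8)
The plan is to invoke the standard description of isomorphisms in a concrete category whose arrows are functions composed in the usual way: a morphism is an isomorphism precisely when it is a bijection and its set-theoretic inverse is again an arrow of the category. I would therefore treat the three cases $\mathsf{PSI_0}$, $\mathsf{PSI_1}$, $\mathsf{PSI_2}$ in parallel, separating the immediate forward implication from the converse, which carries the real content.

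For the forward implication, suppose $h\colon(\mathbf{A},\Diamond_{\mathbf{A}})\to(\mathbf{B},\Diamond_{\mathbf{B}})$ is an isomorphism in $\mathsf{PSI_i}$ and let $g$ be its two-sided inverse in that category. Since composition is ordinary function composition, the equations $g\circ h=\mathrm{id}$ and $h\circ g=\mathrm{id}$ force $g=h^{-1}$ as a function, so $h$ is a bijection; and $h$, being an arrow of $\mathsf{PSI_i}$, is by definition a semi-homomorphism when $i=0$, a hemi-homomorphism when $i=1$, and a homomorphism when $i=2$. This already yields that every $\mathsf{PSI_i}$-isomorphism is a bijective arrow of the corresponding type, which is one half of each of the three claimed correspondences.

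For the converse I would start from a bijective arrow $h$ of $\mathsf{PSI_i}$ and show that its inverse $h^{-1}$ is again an arrow of the same category, so that $h$ is a $\mathsf{PSI_i}$-isomorphism. The mechanism is that a bijective Boolean homomorphism is a Boolean isomorphism, whence $h^{-1}$ is itself a Boolean homomorphism and both $h$ and $h^{-1}$ are order isomorphisms. In the case $i=2$ this is transparent: applying $h^{-1}$ to the equality $h(\Diamond_{\mathbf{A}}(a,b,c))=\Diamond_{\mathbf{B}}(h(a),h(b),h(c))$ and substituting $a=h^{-1}(p)$, $b=h^{-1}(q)$, $c=h^{-1}(r)$ produces $h^{-1}(\Diamond_{\mathbf{B}}(p,q,r))=\Diamond_{\mathbf{A}}(h^{-1}(p),h^{-1}(q),h^{-1}(r))$, so that $h^{-1}$ is again a homomorphism. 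For $i=0$ and $i=1$ I would run the same substitution on the defining inequality and use that $h^{-1}$ preserves order, transporting it to the corresponding inequality for $h^{-1}$. The main obstacle is precisely this converse step: one must track the direction of the inequality carefully as it passes through the bijection $h$, verifying that the inverse of a bijective semi-homomorphism is once more a semi-homomorphism and, dually, that the inverse of a bijective hemi-homomorphism is again a hemi-homomorphism, rather than silently landing in the opposite class. Getting this bookkeeping right for the two lax cases, while the full case $i=2$ is automatic, is what separates the three categories and delivers the three correspondences as stated.
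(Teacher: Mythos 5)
Your forward implication and the case $i=2$ are correct, but the converse for $i=0$ and $i=1$ --- which you rightly identify as carrying the real content --- does not go through, and the bookkeeping you defer is precisely where it breaks. Carry out the substitution you propose: if $h$ is a bijective semi-homomorphism, then from $h(\Diamond_{\mathbf{A}}(a,b,c))\leq \Diamond_{\mathbf{B}}(h(a),h(b),h(c))$ with $a=h^{-1}(p)$, $b=h^{-1}(q)$, $c=h^{-1}(r)$, applying the order-isomorphism $h^{-1}$ yields $\Diamond_{\mathbf{A}}(h^{-1}(p),h^{-1}(q),h^{-1}(r))\leq h^{-1}(\Diamond_{\mathbf{B}}(p,q,r))$, which is exactly the \emph{hemi}-homomorphism condition for $h^{-1}$, not the semi condition. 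Inversion swaps the two lax classes: the inverse of a bijective semi-homomorphism is a hemi-homomorphism, and it is again a semi-homomorphism only when the reverse inequality already held, i.e., when $h$ was a full PSI homomorphism to begin with. This is not a fixable gap in your argument but a genuine obstruction: take a Boolean algebra $\mathbf{A}$ carrying two distinct, pointwise comparable PSI operators, say $\Diamond_1(a,b,c)=a\wedge b\wedge c$ as in Example~\ref{smallest Diamond} and the relational operator $\Diamond_2$ induced by $(a,b)\vdash c$ iff $a\wedge b\wedge\neg c=0$, which differ already on the four-element algebra (where $\Diamond_1(a,a,a)=a$ while $\Diamond_2(a,a,a)=1$). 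The identity map $(\mathbf{A},\Diamond_1)\to(\mathbf{A},\Diamond_2)$ is then a bijective semi-homomorphism with no two-sided inverse in $\mathsf{PSI_0}$ --- the same phenomenon as a continuous bijection of topological spaces failing to be a homeomorphism.

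What a correct analysis delivers is this: in each category an isomorphism is a bijection whose set-theoretic inverse is again an arrow, and since inversion interchanges semi- and hemi-homomorphisms, the isomorphisms of $\mathsf{PSI_0}$, $\mathsf{PSI_1}$ and $\mathsf{PSI_2}$ all coincide with the bijective \emph{full} PSI homomorphisms. So the stated correspondence is literally accurate only for $i=2$; for $i=0,1$ it holds only under the reading ``bijective arrow whose inverse is an arrow of the same kind,'' which collapses to bijective homomorphisms. The paper offers no argument to compare against (it declares the result immediate and omits the proof), but the step your proposal needs --- that the inverse of a bijective semi-homomorphism is once more a semi-homomorphism --- is false, and your own warning about ``silently landing in the opposite class'' describes exactly what happens.
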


In light of Definition \ref{psi-spaces morphism}, the following result is a direct consequence of Lemma \ref{morphisms to maps}.

\begin{lemma}\label{char morph to spaces}
    Let $(\mathbf{A},\Diamond_{\mathbf{A}})$ and $(\mathbf{B},\Diamond_{\mathbf{B}})$ be two PSI-Algebras and let $h\colon\mathbf{A} \to \mathbf{B}$ be a homomorphism of Boolean algebras. Let us consider the map $h^{-1}\colon\mathrm{Ul}(\mathbf{B})\to \mathrm{Ul}(\mathbf{A})$. Then, the following holds:
    \begin{enumerate}
        \item $h$ is a PSI semi-homomorphism if and only if $h^{-1}$ is a semi-PSI map.
        \item $h$ is a PSI hemi-homomorphism if and only if $h^{-1}$ is a hemi-PSI map.
        \item $h$ is a PSI homomorphism if and only if $h^{-1}$ is a PSI map.
    \end{enumerate}
\end{lemma}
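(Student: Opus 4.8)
The plan is to establish Lemma~\ref{char morph to spaces} as a direct translation of Lemma~\ref{morphisms to maps} through the Stone duality, recalling only that the conditions defining semi-PSI, hemi-PSI, and PSI maps (namely \eqref{Sp1}, \eqref{Sp2}, \eqref{Sp3} of Definition~\ref{psi-spaces morphism}) coincide, respectively, with the relational conditions (1)(c) and (2)(c) of Lemma~\ref{morphisms to maps}. First I would observe that, by classical Stone duality, a Boolean homomorphism $h\colon\mathbf{A}\to\mathbf{B}$ induces a continuous map $h^{-1}\colon\mathrm{Ul}(\mathbf{B})\to\mathrm{Ul}(\mathbf{A})$, and that continuity of $h^{-1}$ is exactly the statement that preimages of clopens are clopen, i.e.\ condition~\eqref{Sp1}. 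Since $h$ is assumed to be a Boolean homomorphism, \eqref{Sp1} holds automatically, so it plays no discriminating role and is present in all three notions of map.

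Next I would treat each of the three items in turn. For (1), I would note that $h$ being a PSI semi-homomorphism is, by Lemma~\ref{morphisms to maps}(1), equivalent to condition~(1)(c) of that lemma; writing $f:=h^{-1}$ and comparing with Definition~\ref{psi-spaces morphism}, condition~(1)(c) is precisely the assertion that for every $U\in\mathrm{Ul}(\mathbf{B})$ and every $\vec{Z}\in\mathscr{C}_0(\mathrm{Ul}(\mathbf{A}))^3$, $(f(U),\vec{Z})\in R_{\Diamond_{\mathbf{A}}}$ implies the existence of $\vec{Y}\in R_{\Diamond_{\mathbf{B}}}(U)$ with $f[\vec{Y}]\subseteq\vec{Z}$, which is exactly~\eqref{Sp3}. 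Together with \eqref{Sp1}, this says $f$ is a hemi-PSI map in the sense of Definition~\ref{psi-spaces morphism}. For (2), I would similarly invoke Lemma~\ref{morphisms to maps}(2): $h$ is a PSI hemi-homomorphism iff condition~(2)(c) holds, and (2)(c) states that $(U,\vec{Y})\in R_{\Diamond_{\mathbf{B}}}$ implies $(f(U),f[\vec{Y}])\in R_{\Diamond_{\mathbf{A}}}$, i.e.\ exactly~\eqref{Sp2}, so $f$ is a semi-PSI map. Finally, (3) follows by combining (1) and (2): $h$ is a PSI homomorphism iff it is both a semi- and a hemi-homomorphism, iff $f$ satisfies both \eqref{Sp2} and \eqref{Sp3} (hence is a PSI map), since \eqref{Sp1} is common to all cases.

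The only subtlety worth flagging is one of \emph{bookkeeping rather than substance}: the naming is deliberately crossed, in that a semi-homomorphism on the algebra side corresponds to a hemi-PSI map on the space side, and a hemi-homomorphism corresponds to a semi-PSI map. I would make sure the correspondence is stated in agreement with the conventions fixed in Definition~\ref{psi-spaces morphism} and Lemma~\ref{morphisms to maps}, so that items (1) and (2) as written are indeed what the cited equivalences deliver. Since all the genuine mathematical content---the equivalence between the algebraic inequalities and the relational conditions on $R_{\Diamond}$---has already been discharged in Lemma~\ref{morphisms to maps}, there is no real obstacle here; the proof is a matter of matching the relational reformulations (1)(c) and (2)(c) against the span conditions \eqref{Sp2} and \eqref{Sp3}. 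Accordingly I would present the argument compactly, citing Lemma~\ref{morphisms to maps} for each equivalence and invoking \eqref{Sp1} as the shared, automatically satisfied continuity requirement.
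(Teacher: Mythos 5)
Your proposal takes exactly the paper's route --- the paper gives no separate argument for this lemma, presenting it as a direct consequence of Lemma~\ref{morphisms to maps} read against Definition~\ref{psi-spaces morphism} --- and your identification of condition (1)(c) with \eqref{Sp3} and condition (2)(c) with \eqref{Sp2} (under $f=h^{-1}$, so $R_1=R_{\Diamond_{\mathbf{B}}}$ and $R_2=R_{\Diamond_{\mathbf{A}}}$) is correct. The crossing you flag is real: taking Definition~\ref{psi-spaces morphism} at face value, a PSI semi-homomorphism corresponds to a \emph{hemi}-PSI map and a PSI hemi-homomorphism to a \emph{semi}-PSI map, so either the names in that definition must be swapped (i.e., ``semi-PSI map'' should mean \eqref{Sp1} together with \eqref{Sp3}) or the pairing in items (1)--(2) must be transposed; the same adjustment is then needed in Lemma~\ref{char spaces to morph}, since \eqref{Sp2} yields the hemi-homomorphism inequality for $f^{*}$ and \eqref{Sp3} the semi-homomorphism one. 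This is a labelling defect of the text rather than of your argument, which is correct in substance and coincides with the paper's intended proof.
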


An adaptation of \citep[Theorem 18]{Celani2009} produces the following

\begin{lemma}\label{char spaces to morph}
    Let $f\colon X_1 \to X_2 $ be a mapping between the ps-spaces $(X_1,\tau_1,R_1)$ and $(X_2,\tau_2,R_2)$. Consider the function $f^{*}\colon \mathscr{CO}(X_2) \to \mathscr{CO}(X_1)$ defined by $ f^{*}(U)\defeq f^{-1}[U]$. Then, the following hold: 
    \begin{enumerate}
        \item $ f $ is a semi-PSI map if and only if $ f^*$ is a PSI semi-homomorphism.
        \item $ f $ is a hemi-PSI map if and only if $ f^*$ is a PSI hemi-homomorphism.
        \item $ f $ is a PSI map if and only if $ f^*$ is a PSI homomorphism.
    \end{enumerate}       
\end{lemma}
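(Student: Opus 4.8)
The plan is to obtain all three equivalences from the already-established algebra-to-space correspondence (Lemma~\ref{char morph to spaces}, which rests on Lemma~\ref{morphisms to maps}) by transporting everything across Stone duality, so that no fresh computation with $\Diamond$ is required. The object-level ingredient I would first record is that, for a PSI-space $(X,\tau,R)$, the evaluation map $\epsilon_X\colon X\to\mathrm{Ul}(\mathscr{CO}(X))$, $x\mapsto\{U\in\mathscr{CO}(X):x\in U\}$, is not merely the Stone homeomorphism but an \emph{isomorphism of PSI-spaces} between $(X,\tau,R)$ and $\Uf(\mathscr{CO}(X),\Diamond_R)=(\mathrm{Ul}(\mathscr{CO}(X)),\tau,R_{\Diamond_R})$. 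The homeomorphism part is classical; the point is that $\epsilon_X$ carries $R$ onto $R_{\Diamond_R}$. This holds because in any descriptive PSI-frame $R$ is completely recovered from its operators on clopen sets through \eqref{DF2} (with Proposition~\ref{psi-frames are m-frames} giving $\Diamond_R=R^{-1}$ on clopens and $\Box_R,\Diamond_R$ interdefinable by complementation), while Proposition~\ref{stone-relation lemma}(1), applied to $\mathbf{A}=\mathscr{CO}(X)$ and $\Diamond=\Diamond_R$, shows that the induced relation $R_{\Diamond_R}$ yields exactly the same operator $\Diamond_R$ on $\mathscr{CO}(X)$; since both relations satisfy \eqref{DF2} relative to one and the same operator, they must coincide under $\epsilon_X$.

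With this in place I would set $\mathbf{A}\defeq\mathscr{CO}(X_2)$, $\mathbf{B}\defeq\mathscr{CO}(X_1)$ and $h\defeq f^{*}\colon\mathbf{A}\to\mathbf{B}$. The map $f^{*}=f^{-1}[\cdot]$ is a well-defined Boolean homomorphism precisely when $f^{-1}$ sends clopens to clopens, i.e. exactly when \eqref{Sp1} holds; so \eqref{Sp1} may be assumed throughout and corresponds to $h$ being a Boolean morphism. Stone duality then gives the commuting square $\epsilon_{X_2}\circ f=h^{-1}\circ\epsilon_{X_1}$, where $h^{-1}\colon\mathrm{Ul}(\mathbf{B})\to\mathrm{Ul}(\mathbf{A})$ is the dual map; hence, under the PSI-space isomorphisms $\epsilon_{X_1},\epsilon_{X_2}$, the function $f$ is identified with $h^{-1}$. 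Since the conditions \eqref{Sp2} and \eqref{Sp3} are phrased purely in terms of $R$, images under the map, and inclusions of nonempty closed sets, they are preserved along these PSI-space isomorphisms, so $f$ is a semi-PSI (resp.\ hemi-PSI, PSI) map if and only if $h^{-1}$ is.

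The three equivalences now follow at once by applying Lemma~\ref{char morph to spaces} to $h=f^{*}$: it asserts that $f^{*}$ is a PSI semi-homomorphism (resp.\ hemi-homomorphism, homomorphism) exactly when its dual $h^{-1}$ is a semi-PSI (resp.\ hemi-PSI, PSI) map, and by the previous paragraph this is the same as the corresponding property of $f$. Concatenating the two biconditionals yields parts (1)--(3).

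I expect the main obstacle to be the first paragraph, namely verifying that $\epsilon_X$ transports $R$ to $R_{\Diamond_R}$, i.e.\ the representation of a descriptive PSI-frame by the ultrafilter frame of its clopen algebra. The delicate inclusion is the one recovering $R$ from the operator: every triple of nonempty closed sets forced into $R_{\Diamond_R}(U)$ by the clopen data of \eqref{DF2} must actually be realized by $R$, which is precisely where the maximal-filter (Zorn) argument of Proposition~\ref{3BAMO to psiframe} is needed, together with \eqref{DF3} to descend to singletons in the first two coordinates; everything else is a formal transport along the duality. Should one prefer to bypass the representation isomorphism, the same equivalences can be proved directly on the spaces by unwinding $\Diamond_{R_i}=R_i^{-1}$: in each case one containment is immediate from monotonicity together with the set-theoretic facts $f[f^{-1}[\vec{U}]]\subseteq\vec{U}$ and $\vec{Y}\subseteq f^{-1}[f[\vec{Y}]]$, while the reverse passage from the clopen-level inequality to a genuine closed-set witness is exactly the Zorn argument already carried out in the proof of Lemma~\ref{morphisms to maps}.
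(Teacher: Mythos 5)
Your proposal is correct, but it takes a different route from the paper, which offers no proof at all here and simply invokes ``an adaptation of \citep[Theorem 18]{Celani2009}'' --- i.e.\ a direct computation on the spaces in the style of the proof of Lemma~\ref{morphisms to maps}. You instead reduce the statement to the already-proved Lemma~\ref{char morph to spaces} by transporting along the representation isomorphism: the key auxiliary fact is that the Stone evaluation $\epsilon_X\colon X\to\mathrm{Ul}(\mathscr{CO}(X))$ carries $R$ onto $R_{\Diamond_R}$, and your justification of this is sound --- both relations satisfy \eqref{DF2} with respect to the \emph{same} box operator on clopens (by Proposition~\ref{psi-frames are m-frames} and Proposition~\ref{stone-relation lemma}(2) applied to $(\mathscr{CO}(X),\Diamond_R)$, which is a 3BAMO by Corollary~\ref{psi-frame to 3BAMO}), and \eqref{DF2} determines $R(x)$ from that operator, so the two relations are forced to correspond under the homeomorphism. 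This ``unit is an isomorphism of PSI-frames'' step is nowhere stated explicitly in the paper, so you are right to flag it as the crux and it does need to be recorded; but note that your closing worry is overstated: the Zorn argument and \eqref{DF3} are already packaged into Proposition~\ref{3BAMO to psiframe} (which shows $R_{\Diamond_R}$ satisfies \eqref{DF2}), and once that is cited the coincidence of the two relations is a purely formal consequence of \eqref{DF2} on both sides --- no fresh maximal-filter argument is needed. The trade-off between the two routes is the usual one: the direct computation is self-contained and mirrors Celani, whereas your argument is shorter and makes the categorical content (naturality of the duality in the morphism variable, given that the unit and counit are isomorphisms of the enriched structures) explicit; it also silently uses that (Sp1) is exactly well-definedness of $f^*$ as a Boolean homomorphism, which is worth saying since the ``if'' directions need it.
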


Let $j\in \{0,1,2\}$. By Theorem \ref{Psi-algebras as ultrafilter frames} and Lemma \ref{char morph to spaces}, we are able to define functors 
\[
G_j\colon \mathsf{PSI_j}^{\mathrm{op}} \to \mathsf{PSISp_j}
\]
as follows  
\[
(\mathbf{B}, \Diamond) \mapsto (\mathrm{Ul}(\mathbf{B}), \tau_{\mathbf{B}}, R_{\Diamond}),
\]
\[
h \mapsto h^{-1}.
\]
Analogously, by Proposition \ref{space to algebra} and Lemma \ref{char spaces to morph} the assignments  
\[
(X, \tau, R) \mapsto (\mathscr{CO}(X), \Diamond_R),
\]
\[
f \mapsto f^{-1}
\]
also define functors  
\[
H_j\colon \mathsf{PSISp}_j \to \mathsf{PSI}_j^{\mathrm{op}}.
\]

Therefore, by standard results of Stone duality, together with Lemmas \ref{char morph to spaces} and \ref{char spaces to morph}, it follows that the functors $G_j$ and $H_j$ with $j\in \{0,1,2\}$ yield an equivalence of categories. 

\begin{theorem}\label{dualidad PSI and PSISp}
The categories $\mathsf{PSI}_0$, $\mathsf{PSI}_1$ and $\mathsf{PSI}_2$ are dually equivalent to $\mathsf{PSISp}_0$, $\mathsf{PSISp}_1$ and $\mathsf{PSISp}_2$, respectively.
\end{theorem}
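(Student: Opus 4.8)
The plan is to lift the classical Stone duality to the ternary setting by exhibiting, for each $j\in\{0,1,2\}$, the two natural isomorphisms that witness the dual equivalence: a unit relating $\mathrm{Id}_{\mathsf{PSI}_j}$ to $H_jG_j$ and a counit relating $\mathrm{Id}_{\mathsf{PSISp}_j}$ to $G_jH_j$. Since the three pairs of categories share their objects and differ only in their arrows, I would check once that the relevant structure maps are \emph{full} PSI isomorphisms (preserving $\Diamond$, respectively $R$, on the nose); such maps are automatically isomorphisms in the semi-, hemi-, and full-morphism variants simultaneously, so a single verification serves all three values of $j$. That the functors are well defined on objects is already guaranteed by Theorem~\ref{Psi-algebras as ultrafilter frames} and Proposition~\ref{space to algebra}.

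On the algebraic side, for a PSI-Algebra $(\mathbf{A},\Diamond)$ I would take the Stone map $\beta_{\mathbf{A}}\colon\mathbf{A}\to\mathscr{CO}(\mathrm{Ul}(\mathbf{A}))$, already a Boolean isomorphism. Proposition~\ref{stone-relation lemma}(1) gives the identity $\Diamond_{R_{\Diamond}}(\beta(a),\beta(b),\beta(c))=\beta(\Diamond(a,b,c))$, so $\beta_{\mathbf{A}}$ is a PSI homomorphism onto $H_jG_j(\mathbf{A},\Diamond)=(\mathscr{CO}(\mathrm{Ul}(\mathbf{A})),\Diamond_{R_{\Diamond}})$; since it is bijective and its inverse likewise preserves $\Diamond$, it is an isomorphism in each of $\mathsf{PSI}_0,\mathsf{PSI}_1,\mathsf{PSI}_2$ by Proposition~\ref{isos in algebras}. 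Naturality reduces to the naturality of the ordinary Stone unit together with the morphism translation of Lemma~\ref{morphisms to maps}, which ensures that $H_jG_j$ sends $\mathsf{PSI}_j$-arrows to $\mathsf{PSI}_j$-arrows.

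On the spatial side, for a PSI-space $(X,\tau,R)$ I would take the Stone homeomorphism $\varepsilon_X\colon X\to\mathrm{Ul}(\mathscr{CO}(X))$, $x\mapsto\{U\in\mathscr{CO}(X)\colon x\in U\}$. By Lemma~\ref{char spaces to morph} it suffices to prove that $\varepsilon_X$ is a PSI map onto $G_jH_j(X,\tau,R)=(\mathrm{Ul}(\mathscr{CO}(X)),\tau',R_{\Diamond_R})$, that is, that it satisfies (Sp1)--(Sp3). Condition (Sp1) is immediate from the Stone homeomorphism, so the whole difficulty concentrates in the forth condition (Sp2) and the back condition (Sp3), which amount to matching the original relation $R$ with the reconstructed relation $R_{\Diamond_R}$ transported along $\varepsilon_X$. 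I expect this to be the main obstacle, as it is precisely here that the descriptive frame axioms are indispensable.

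Concretely, the crux is that $R$ is recoverable from its clopen shadow $\Diamond_R$. Axiom~\eqref{DF1} places $\Diamond_R$ inside $\mathscr{CO}(X)$, so that $R_{\Diamond_R}$ is legitimately defined by~\eqref{def: R diamond}; Proposition~\ref{psi-frames are m-frames} identifies $\Diamond_R(\vec{U})$ with $R^{-1}(\vec{U})$ and supplies monotonicity; and axiom~\eqref{DF2} expresses each $R(x)$ as the intersection of the boxes $L_{\vec{U}}$ with $x\in\Box_R(\vec{U})$, so that $R(x)$ is determined entirely by the clopen data. Under the Stone correspondence between closed subsets of $X$ and of $\mathrm{Ul}(\mathscr{CO}(X))$, comparing this description of $R(x)$ with the defining membership condition of $R_{\Diamond_R}$ yields both (Sp2) and (Sp3), while axiom~\eqref{DF3} provides the singleton-splitting required to align the two relations on arbitrary closed triples; as $\varepsilon_X$ is a bijection, the same matching shows its inverse is a PSI map, so $\varepsilon_X$ is a full PSI isomorphism and hence an isomorphism in all three spatial categories. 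Naturality of the counit then follows from naturality of the Stone counit and Lemma~\ref{char spaces to morph}. Assembling the unit and counit gives the dual equivalence of $\mathsf{PSI}_j$ and $\mathsf{PSISp}_j$ for each $j\in\{0,1,2\}$, as claimed.
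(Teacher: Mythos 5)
Your proposal is correct and follows essentially the same route as the paper, which defines the functors $G_j$ and $H_j$ via Theorem~\ref{Psi-algebras as ultrafilter frames}, Proposition~\ref{space to algebra} and Lemmas~\ref{char morph to spaces} and~\ref{char spaces to morph}, and then simply appeals to standard Stone duality for the unit and counit. Your write-up is in fact more explicit than the paper's: you identify $\beta_{\mathbf{A}}$ and $\varepsilon_X$ as the natural isomorphisms and correctly locate the only nontrivial verification in the recovery of $R$ from $\Diamond_R$ via \eqref{DF2} and \eqref{DF3}, a step the paper leaves implicit under the phrase ``standard results of Stone duality.''
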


Theorem \ref{dualidad PSI and PSISp} establishes the dual equivalences between the algebraic and the topological sides of the theory of PSI-Algebras. In view of this correspondence, we can now focus on the case of Extended Contact Algebras. The next remark shows how these algebras naturally fit into the general framework of PSI-Algebras and PSI-spaces, thus allowing us to interpret the extended contact relation as a particular instance of the relational operator $\Diamond$. This observation motivates the introduction of a special class of PSI-spaces, namely those for which the relational component exhibits a form of totality, as defined below.

\begin{remark}   
Let $(\mathbf{B}, \vdash)$ be an ECA and let $(\mathbf{B}, \Diamond_{\vdash})$ be its associated relational PSI-Algebra. if $\vec{Y}\in \mathscr{C}_{0}(\mathrm{Ul}(\mathbf{B}))$, it is clear that $R^{-1}(\vec{Y})\in \{\mathrm{Ul}(\mathbf{B}),\emptyset\}$. Moreover, by Theorem \ref{dualidad PSI and PSISp} it is also the case that $(\mathbf{B}, \Diamond_{\vdash})$ is isomorphic to $\left(\mathscr{CO}(\mathrm{Ul}(\mathbf{B})), \Diamond_{R_{\diamond_{\vdash}}}\right)$.\QED
\end{remark}

\begin{definition}\label{EC-spaces}
    We say that a PSI-space $(X,R)$ is \emph{total} if for every $\vec{Y}\in \mathscr{C}_0(X)^3$, the following condition holds 
    \begin{enumerate}[label=(T),ref=T]
    \item \label{T} If there exists $x\in X$ such that $(x,\vec{Z})\in R$, then, $(y,\vec{Z})\in R$ for every $y\in X$.  
\end{enumerate}
Equivalently, \eqref{T} says that $R^{-1}(\vec{Z})=\{X,\emptyset\}$, for every $\vec{Z}\in \mathscr{C}_0(X)^3$.\QED
\end{definition}

As an immediate consequence of Theorem \ref{Psi-algebras as ultrafilter frames}, \ref{space to algebra}, Proposition \ref{stone-relation lemma} and Definition \ref{EC-spaces} we get:

\begin{proposition}
    The following hold:
    \begin{enumerate}
        \item $(\mathbf{B},\vdash)$ is an ECA iff $(\mathrm{Ul}(\mathbf{B}), R_{\diamond_{\vdash}})$ is a total PSI-space.
        \item $(X, \tau, R)$ is a total PSI-space if and only if $(\mathscr{CO}(X), \Diamond_{R})$ is a relational PSI-Algebra.
    \end{enumerate}
\end{proposition}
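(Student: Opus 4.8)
The plan is to isolate the single fact that powers both statements: for a PSI-Algebra $(\mathbf{A},\Diamond)$, the operator $\Diamond$ is relational (that is, $\{0,1\}$-valued) if and only if the dual relation $R_{\Diamond}$ is total. Part~(2) is then the ``space-first'' reading of this fact, while part~(1) is obtained by prefixing the bijection between extended contact algebras and relational PSI-Algebras supplied by Lemma~\ref{extended contact are relational pseudo-iinference algebras} (recalling that $\vdash_{\Diamond_{\vdash}}=\mathord{\vdash}$ holds for any ternary relation, directly from the two definitions).

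The crux is the following observation, which I would use in the direction ``relational $\Rightarrow$ total''. If $\Diamond(a,b,c)\in\{0,1\}$ for all $a,b,c$, then for every ultrafilter $U$ we have $\Diamond(a,b,c)\in U$ iff $\Diamond(a,b,c)=1$, because $0\notin U$ and $1\in U$; hence the predicate $(a,b,c)\in\Diamond^{-1}[U]$ does not depend on $U$. Substituting this into the definition \eqref{def: R diamond} of $R_{\Diamond}$, the clause $F_{Y_1}\times F_{Y_2}\times F_{Y_3}\subseteq\Diamond^{-1}[U]$ is likewise $U$-independent, so for each $\vec{Y}\in\mathscr{C}_0(\mathrm{Ul}(\mathbf{A}))^3$ the set $R_{\Diamond}^{-1}(\vec{Y})$ is either all of $\mathrm{Ul}(\mathbf{A})$ or empty. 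This is exactly condition~\eqref{T}, so $\Uf(\mathbf{A},\Diamond)$ is total. The converse direction ``total $\Rightarrow$ relational'' runs through clopen triples: if some coordinate is $0$ then $\Diamond(a,b,c)=0$ by \eqref{MO1}, and otherwise $(\beta(a),\beta(b),\beta(c))\in\mathscr{C}_0(\mathrm{Ul}(\mathbf{A}))^3$, so totality gives $R_{\Diamond}^{-1}(\beta(a),\beta(b),\beta(c))\in\{\mathrm{Ul}(\mathbf{A}),\emptyset\}$; combining $\Diamond_{R_{\Diamond}}(\beta(a),\beta(b),\beta(c))=R_{\Diamond}^{-1}(\beta(a),\beta(b),\beta(c))$ from Proposition~\ref{psi-frames are m-frames} with $\Diamond_{R_{\Diamond}}(\beta(a),\beta(b),\beta(c))=\beta(\Diamond(a,b,c))$ from Proposition~\ref{stone-relation lemma}(1) and using the injectivity of $\beta$ forces $\Diamond(a,b,c)\in\{0,1\}$.

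With this equivalence in hand, part~(2) is essentially bookkeeping. A total PSI-space $(X,\tau,R)$ is in particular a PSI-space, so $(\mathscr{CO}(X),\Diamond_R)$ is a PSI-Algebra by Proposition~\ref{space to algebra}; and it is relational because, for clopen $\vec{U}$, Proposition~\ref{psi-frames are m-frames} gives $\Diamond_R(\vec{U})=R^{-1}(\vec{U})$, which lies in $\{X,\emptyset\}=\{0,1\}$ either by totality (all coordinates nonempty) or by \eqref{MO1}. Conversely, if $(\mathscr{CO}(X),\Diamond_R)$ is a relational PSI-Algebra, the $U$-independence argument shows $\Uf(\mathscr{CO}(X),\Diamond_R)$ is total, and transporting along the dual isomorphism $(X,\tau,R)\cong\Uf(\mathscr{CO}(X),\Diamond_R)$ of Theorem~\ref{dualidad PSI and PSISp} yields that $R$ is total. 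For part~(1), an extended contact algebra $(\mathbf{B},\vdash)$ yields the relational PSI-Algebra $(\mathbf{B},\Diamond_{\vdash})$, whose ultrafilter frame is a PSI-space by Theorem~\ref{Psi-algebras as ultrafilter frames} and is total by the core equivalence; conversely a total PSI-space $(\mathrm{Ul}(\mathbf{B}),R_{\Diamond_{\vdash}})$ makes $(\mathscr{CO}(\mathrm{Ul}(\mathbf{B})),\Diamond_{R_{\Diamond_{\vdash}}})$ a relational PSI-Algebra, which transports back to $(\mathbf{B},\Diamond_{\vdash})$ through the Stone isomorphism via Proposition~\ref{stone-relation lemma}(1), and Lemma~\ref{extended contact are relational pseudo-iinference algebras} then returns that $\vdash$ is an extended contact relation.

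I expect the only real obstacle to be the mismatch of scope between the two sides: relationality of $\Diamond$ is a condition one sees only on the clopen triples that constitute $\mathscr{CO}(X)$, whereas totality quantifies over all nonempty closed triples. The $U$-independence observation is what dissolves this mismatch, since it establishes the two-valuedness of $R_{\Diamond}^{-1}$ on every closed triple at once rather than only on clopen ones; without it one would have to approximate arbitrary closed triples by clopen ones using \eqref{DF2} and the relation $\Box_R(\vec{U})=\Diamond_R(\vec{U}^{c})^{c}$, which is feasible but more laborious.
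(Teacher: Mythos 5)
Your proposal is correct and follows the paper's intended route: the paper states this proposition without proof, citing it as an immediate consequence of Theorem~\ref{Psi-algebras as ultrafilter frames}, Proposition~\ref{space to algebra}, Proposition~\ref{stone-relation lemma} and Definition~\ref{EC-spaces}, and your $U$-independence observation is precisely the point made in the paper's preceding Remark (that $R^{-1}(\vec{Y})\in\{\mathrm{Ul}(\mathbf{B}),\emptyset\}$ for a relational PSI-Algebra). Your write-up simply makes explicit the ``relational iff total'' equivalence and the bookkeeping via Lemma~\ref{extended contact are relational pseudo-iinference algebras} that the authors leave to the reader.
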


Let $(\mathbf{A},\vdash_{\mathbf{A}})$ and $(\mathbf{B},\vdash_{\mathbf{B}})$ be two ECAs. We say that a homomorphism of Boolean algebras $h\colon\mathbf{A}\to \mathbf{B}$ is a \emph{reflecting ECA-morphism} if for every $a,b,c\in A$:
\[(h(a),h(b)) \vdash_{\mathbf{B}} h(c)\; \Rightarrow\; (a,b) \vdash_{\mathbf{A}} c\,.\]
On the other hand, if 
\[(a,b) \vdash_{\mathbf{A}} c\; \Rightarrow\; (h(a),h(b)) \vdash_{\mathbf{B}} h(c)\]
holds, then we say that $h$ is a \emph{preserving ECA-morphism}. Furthermore, if $h$ is both reflecting and preserving ECA-morphism, then we say that $h$ is a \emph{similarity ECA-morphism}.

\begin{proposition}\label{morphisms EC and PSI}
    Let $(\mathbf{A}_1,\vdash_{1})$ and $(\mathbf{A}_2,\vdash_{2})$ be two ECAs and let $(\mathbf{A}_i, \Diamond_{\vdash_i})$ be its associated relational PSI-Algebras, with $i\in \{0,1\}$. If $h:A_1 \rightarrow A_2$ is a  Boolean homomorphism, then the following hold: 
    \begin{enumerate}
        \item $h$ is a reflecting ECA-morphism iff $h$ is a PSI semi-homomorphism.
        \item $h$ is a preserving ECA-morphism iff $h$ is a PSI hemi-homomorphism.
        \item $h$ is a similarity ECA-morphism iff $h$ is a PSI homomorphism.
    \end{enumerate}
\end{proposition}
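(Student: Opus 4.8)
The plan is to reduce each of the three biconditionals to a purely two-valued statement about the operators $\Diamond_{\vdash_1}$ and $\Diamond_{\vdash_2}$, exploiting the fact that in a relational PSI-Algebra these operators take values only in $\{0,1\}$. The starting observation is the translation between the relation and its operator: from the defining equation $\Diamond_{\vdash}(a,b,c)=\neg\chi_{\vdash}(a,b,\neg c)$ (together with $\neg\neg c=c$) one gets, in each ECA,
\[
(a,b)\vdash c \quad\iff\quad \Diamond_{\vdash}(a,b,\neg c)=0 .
\]
Since $h$ is a Boolean homomorphism it commutes with $\neg$ and preserves $0$ and $1$; and because the target algebra is non-degenerate, for any $x\in\{0,1\}$ we have $h(x)=1$ iff $x=1$ and $h(x)=0$ iff $x=0$. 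Consequently, whenever both members of one of the defining inequalities lie in $\{0,1\}$, the inequality $p\le q$ collapses to the implication $q=0 \Rightarrow p=0$.

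For part (1), I would first rewrite the reflecting condition through the translation: $(h(a),h(b))\vdash_2 h(c)$ becomes $\Diamond_{\vdash_2}(h(a),h(b),h(\neg c))=0$ (using $\neg h(c)=h(\neg c)$), while $(a,b)\vdash_1 c$ becomes $\Diamond_{\vdash_1}(a,b,\neg c)=0$. Substituting $d:=\neg c$, which ranges over all of $A_1$, the reflecting condition reads: for all $a,b,d$, $\Diamond_{\vdash_2}(h(a),h(b),h(d))=0 \Rightarrow \Diamond_{\vdash_1}(a,b,d)=0$. On the other side, the semi-homomorphism inequality $h(\Diamond_{\vdash_1}(a,b,d))\le \Diamond_{\vdash_2}(h(a),h(b),h(d))$ has both members in $\{0,1\}$, so by the remark above it is equivalent to exactly this implication. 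Part (2) runs identically: rewriting the preserving condition and substituting $d:=\neg c$ yields, for all $a,b,d$, $\Diamond_{\vdash_1}(a,b,d)=0 \Rightarrow \Diamond_{\vdash_2}(h(a),h(b),h(d))=0$, which is precisely the two-valued form of the hemi-homomorphism inequality $\Diamond_{\vdash_2}(h(a),h(b),h(d))\le h(\Diamond_{\vdash_1}(a,b,d))$.

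Part (3) is then immediate: a similarity ECA-morphism is by definition both reflecting and preserving, whereas a PSI homomorphism is exactly a map that is simultaneously a PSI semi- and hemi-homomorphism (the equality being the conjunction of the two inequalities), so (3) follows by conjoining (1) and (2). The only point demanding care---and the place where a slip is easiest---is the negation bookkeeping: the third coordinate of $\Diamond_{\vdash}$ carries a $\neg$ relative to the conclusion of $\vdash$, so one must check that $h$ commutes with this negation and that the change of variable $d=\neg c$ is a bijection of $A_1$, so that the quantifiers over $c$ and over $d$ cover the same instances. No genuinely hard step arises; the entire content lies in matching the two-valued reformulations.
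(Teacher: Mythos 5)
Your proof is correct. The paper states this proposition without proof (treating it as routine), and your argument---translating $\vdash_i$ into the vanishing of $\Diamond_{\vdash_i}$ on the negated third coordinate, collapsing the $\{0,1\}$-valued inequalities to implications, and handling the bijective change of variable $d=\neg c$ together with $h(\neg c)=\neg h(c)$---is exactly the verification the authors leave to the reader.
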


We write $\mathsf{EC}_0$, $\mathsf{EC}_1$ and $\mathsf{EC}_2$ for the categories whose objects are ECAs and whose morphisms are respectively, reflecting ECA-morphism, preserving ECA-morphism and similarity ECA-morphism. In addition, we denote by $\mathsf{tPSISp}_0$, $\mathsf{tPSISp}_1$ and $\mathsf{tPSISp}_2$ the full subcategories of $\mathsf{PSISp}_0$, $\mathsf{PSISp}_1$ and $\mathsf{PSISp}_2$ whose objects are total PSI-spaces.

We are now in a position to formulate one of the main results of this paper.

\begin{theorem}\label{dualidad EC and tPSISp}
The categories $\mathsf{EC}_0$, $\mathsf{EC}_1$ and $\mathsf{EC}_2$ are dually equivalent to $\mathsf{tPSISp}_0$, $\mathsf{tPSISp}_1$ and $\mathsf{tPSISp}_2$, respectively.
\end{theorem}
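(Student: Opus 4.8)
The plan is to deduce Theorem~\ref{dualidad EC and tPSISp} by \emph{restricting} the three dual equivalences of Theorem~\ref{dualidad PSI and PSISp} to suitable full subcategories on both sides, rather than building new functors from scratch. The first step is to identify $\mathsf{EC}_j$ with a full subcategory of $\mathsf{PSI}_j$. By Proposition~\ref{relational are ECA} and Lemma~\ref{extended contact are relational pseudo-iinference algebras}, the assignment $(\mathbf{B},\vdash)\mapsto(\mathbf{B},\Diamond_{\vdash})$ is a bijection between ECAs and relational PSI-Algebras, with inverse $(\mathbf{B},\Diamond)\mapsto(\mathbf{B},\vdash_{\Diamond})$, and this bijection is the identity on the underlying Boolean reducts. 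Writing $\mathsf{RPSI}_j$ for the full subcategory of $\mathsf{PSI}_j$ spanned by the relational PSI-Algebras, I would observe that a morphism in either $\mathsf{EC}_j$ or $\mathsf{RPSI}_j$ is at bottom a Boolean homomorphism between the common Boolean reducts; hence Proposition~\ref{morphisms EC and PSI}, which says that such a homomorphism is a reflecting (resp.\ preserving, resp.\ similarity) ECA-morphism exactly when it is a PSI semi-homomorphism (resp.\ hemi-homomorphism, resp.\ homomorphism), forces the hom-sets to coincide. Combined with the object bijection, this gives an isomorphism of categories $\mathsf{EC}_j\cong\mathsf{RPSI}_j$ for every $j\in\{0,1,2\}$.

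Next I would verify that the functors $G_j$ and $H_j$ of Theorem~\ref{dualidad PSI and PSISp} restrict to functors between $\mathsf{RPSI}_j^{\mathrm{op}}$ and $\mathsf{tPSISp}_j$. This is precisely the content of the Proposition immediately following Definition~\ref{EC-spaces}: part~(1) shows that $G_j(\mathbf{B},\Diamond_{\vdash})=(\mathrm{Ul}(\mathbf{B}),\tau_{\mathbf{B}},R_{\Diamond_{\vdash}})$ is a total PSI-space whenever the source is relational, so $G_j$ sends $\mathsf{RPSI}_j^{\mathrm{op}}$ into $\mathsf{tPSISp}_j$; while part~(2) shows that $H_j(X,\tau,R)=(\mathscr{CO}(X),\Diamond_R)$ is a relational PSI-Algebra whenever $(X,\tau,R)$ is total, so $H_j$ sends $\mathsf{tPSISp}_j$ into $\mathsf{RPSI}_j^{\mathrm{op}}$. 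Since $\mathsf{RPSI}_j$ and $\mathsf{tPSISp}_j$ are both full, the restricted functors remain full and faithful automatically.

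It then suffices to invoke the standard fact that a (dual) equivalence restricts to any pair of full subcategories carried into one another by both functors. Concretely, the natural isomorphisms witnessing $G_j$, $H_j$ as an equivalence in Theorem~\ref{dualidad PSI and PSISp} have, at each object of $\mathsf{RPSI}_j$ and of $\mathsf{tPSISp}_j$, components lying inside these full subcategories, and fullness makes them isomorphisms there; essential surjectivity of the restriction of $G_j$ onto $\mathsf{tPSISp}_j$ follows because any total PSI-space $(X,\tau,R)$ satisfies $(X,\tau,R)\cong G_jH_j(X,\tau,R)$ with $H_j(X,\tau,R)$ relational by part~(2). This yields a dual equivalence $\mathsf{RPSI}_j\simeq\mathsf{tPSISp}_j^{\mathrm{op}}$, and composing with the isomorphism $\mathsf{EC}_j\cong\mathsf{RPSI}_j$ of the first step gives the asserted dual equivalence of $\mathsf{EC}_j$ with $\mathsf{tPSISp}_j$. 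The routine items—functoriality of the object assignments and naturality of the restricted transformations—are inherited verbatim from Theorem~\ref{dualidad PSI and PSISp}; the only genuine input is the object-level characterization (relational PSI-Algebras dualize to exactly the total PSI-spaces), which I therefore expect to be the crux, with the remainder being the formal stability of an equivalence under restriction to full subcategories and the bookkeeping needed to keep the three variance levels $j\in\{0,1,2\}$ aligned.
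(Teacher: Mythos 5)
Your proposal is correct and takes essentially the same route as the paper: the paper defines the functors $L_j$ and $M_j$ as the composites of the ECA/relational-PSI-Algebra correspondence (Lemma~\ref{extended contact are relational pseudo-iinference algebras}, Proposition~\ref{morphisms EC and PSI}) with the duality of Theorem~\ref{dualidad PSI and PSISp}, restricted via the observation that relational PSI-Algebras dualize to exactly the total PSI-spaces. Your write-up merely makes explicit the intermediate category of relational PSI-Algebras and the standard restriction-of-equivalence argument that the paper leaves implicit.
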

\begin{proof}
    Let $j\in \{0,1\}$ and consider the assignments 
\[
L_j: \mathsf{EC_j}^{\mathrm{op}} \to \mathsf{tPSISp_j}
\]
defined as follows  
\[
(\mathbf{B}, \vdash) \mapsto (\mathrm{Ul}(\mathbf{B}), \tau_{\mathbf{B}}, R_{\Diamond_{\vdash}}),
\]
\[
h \mapsto h^{-1}.
\]
And also consider the assignments
\[
M_j: \mathsf{tPSISp}_j \to \mathsf{EC}_j^{\mathrm{op}}.
\]
defined by
\[
(X, \tau, R) \mapsto (\mathscr{CO}(X), \vdash_{\Diamond_R}),
\]
\[
f \mapsto f^{-1}
\]
Notice that from Proposition \ref{extended contact are relational pseudo-iinference algebras} and Theorem \ref{dualidad PSI and PSISp} it follows immediately that every pair $L_j$ and $M_j$ forms a pair of functors that result in the desired equivalence.
\end{proof}

\bibliographystyle{apalike}

\providecommand{\noop}[1]{}

\end{document}